\newcommand{\mingap}{\lambda_{\min}}
\let\emptyset\varnothing
\let\epsilon\varepsilon
\let\phi\varphi
\def\B{{\mathcal B}}
\def\X{{\bf X}}
\def\y{{\bf y}}
\def\z{{\bf z}}
\def\x{{\bf x}}
\def\C{\mathcal C}
\def\S{\mathcal S}
\def\H{\mathcal H}
\def\O{{\cal O}}
\def\G{{\mathcal G}}
\def\bH{\mathbf H}
\newcommand{\argmaxdisp}[1]{\underset{#1}{\argmax}}
\def\I{{{}^{{s}}}\hskip-2ptI}
\def\argmin{\operatorname{argmin}}
\def\argmax{\operatorname{argmax}}
\newcommand{\cl}[1]{{\operatorname{\scriptstyle\texttt{closure}}}(#1)}
\def\argmin{\operatorname{argmin}}
\def\E{\mathbb E}
\def\N{\mathbb N}
\def\R{\mathbb R}
\def\as{\text{ a.s.}}
\title{Asymptotic nonparametric statistical analysis of stationary time series}
\author{Daniil Ryabko}
\date{}
\begin{document}

 \maketitle
\preface
This book is about making statistical inference from stationary discrete-time processes. The assumption of stationarity alone is often considered too weak to make any meaningful inference.  Here  this view is challenged by showing that, while some rather basic problems indeed can be proven not to admit any solution in this setting, surprisingly many are solvable  without any further assumptions. These includes such complex problems as clustering and change-point analysis. Some general results characterizing those  problems that admit a solution are also presented.

The material in this volume is presented in a way that presumes familiarity with basic concepts of probability and statistics, up to and including probability distributions over spaces of infinite sequences. All the required background material can be found in the excellent monograph~\cite{Gray:88}, which also contains a much deeper exposition of some of the key concepts used here, such as the distributional distance. Familiarity with ergodic theory is {\em  not required} for understanding the material exposed in the present volume. Indeed, with two exceptions, the proofs do not rely on any facts deeper than the convergence of frequencies. 
One exception is  Chapter~\ref{ch:ht}, which deals with hypothesis testing and provides a characterisation of hypotheses for which consistent tests exist; the required background material for this chapter can be found in Chapter~\ref{ch:pre}.
The other exception is Section~\ref{s:hom}, which establishes impossibility of discrimination between process distributions; this section is self-contained. 
 The reader who is familiar with ergodic theory and feels the exposition in this volume is somewhat unorthodox, can find all the necessary links to the more familiar  framework in \cite{Shields:96}; the latter book is also recommended to anyone seeking a deeper understanding of such results as the slow convergence of frequencies and entropy estimates,  the classic ergodic theorem and much more.

This book is organized as follows. Chapter~\ref{ch:intro} is introductory: besides providing some motivation for studying the problems addressed, it also introduces in an informal manner the main concepts used and the main results presented. Chapter~\ref{ch:pre} introduces the notation and definitions used in the subsequent chapters, as well as some necessary background material. Chapter~\ref{ch:basic} considers the most basic problems of statistical inference, on which the rest of the volume builds: estimating a distance between processes (the distributional distance) and the problem of homogeneity testing or process discrimination, which, crucially for the subsequent problems  addressed,  is shown to be impossible to solve in the general setting of this book. Chapter~\ref{ch:clchp}  is devoted to  clustering and change-point problems, which can be solved, or, in some cases, can be shown to admit no solution, based on the result of the preceding chapter. Chapter~\ref{ch:ht} addresses the problems of hypotheses testing in the general form: studying which pairs of hypotheses admit a consistent test. Finally, Chapter~\ref{ch:conc} discusses various generalizations of the presented results, as well as some directions for future research.

\subsubsection*{Acknowledgements}
\vskip-2mm
{\small Thanks to L\'eon Bottou for giving me the idea to write a book on this subject and for encouraging me to do it.  Thanks to  Boris Ryabko and Azadeh Khaleghi, in collaboration with whom some of the results presented here were obtained.
}

\vskip15mm
\noindent Santa Cruz de la Sierra \hfill Daniil Ryabko

\tableofcontents

\chapter{Introduction}\label{ch:intro}

This book is about making statistical inference from discrete-time processes under what is perhaps the weakest of statistical assumptions: stationarity.
Before embarking on this journey, it is worth asking the question of why it is interesting to study statistical problems under this   assumption    alone, or under similar related assumptions.
To answer this question, one should first consider what it means to have a  good  set of assumptions, or a good model, for a statistical problem at hand.

Choosing the right assumptions presents the following trade-off. On the one hand, making strong assumptions makes the inference task easier and allows one to obtain stronger performance guarantees for the algorithms developed. For example, by assuming that the data are independent and identically distributed (i.i.d.), one gets at one's disposal an extremely versatile  statistical toolkit that is a result of centuries of research on this model. With this, it is possible  to obtain sharp bounds on error probabilities of the resulting methods. Even stronger results can be obtained if one further makes parametric assumptions.   On the other hand, all such results are useless if the assumptions made do not hold for the data at hand.  Of course, one can try to apply a statistical test to the data in order to verify the validity of one or another model. This, however, only pushes back the problem, because to use a test one needs to make another  set of assumptions, called the alternative. Indeed, it is not possible to test, based on data, that the assumption $H_0$ holds versus it does not hold. For example, it is not possible to test that  the data are Gaussian i.i.d.\ versus the distribution of the data is anything else except Gaussian i.i.d. This is because the alternative ``anything else'' is too general and  includes, for example, such distributions as the one that is concentrated precisely on the data available. It is, however, possible to design a test for the hypothesis ``the data are Gaussian i.i.d.'' versus ``the data are i.i.d.\ but not Gaussian'' or ``the data are i.i.d.'' versus ``the distribution of the data is stationary.''  In other words, it may be possible to test a set of assumptions $H_0$ versus an alternative set of assumptions $H_1$. The latter is typically much more general; in fact, one is interested in making it as general as possible. Nonetheless, the alternative hypothesis  is still a set of assumptions.  

And so we are back to the question of how one can  select a model or a set of assumptions for the data one has. Here we need to admit that  this question brings us outside of the realm of mathematics. The answer is simply that one should make assumptions that one can reasonably expect to hold based on the specifics of the target application.  Thus, the assumptions should be qualitative, natural and simple~--- utterly unmathematical terms, but such is the problem. Otherwise, there is little hope to be able to say whether the model is adequate for any given application.  A good example are assumptions based on independence. Indeed, this must be one of  the reasons why independent and identically distributed data are so widely studied: it is often possible to tell whether the application produces data that are independent or that are not independent.  Other models that are based on independence are Markov chains and, more generally, Bayesian networks. 

 Unfortunately, there are not many alternatives to independence-based models. Thus, if the data are utterly and completely dependent, as perhaps are most of the data in the world, a statistician is a bit short of options.   A common generalisation to resort to in such cases are various mixing assumptions. These allow one to extend the tools and methods developed for i.i.d.\ data to the cases of carefully constrained dependence. However, mixing assumptions are neither verifiable against a general alternative (such as stationarity) nor, to say the least, are easy to asses   informally from the data. 

Stationarity is perhaps the only general non-parametric model that is not based on independence, and which is also qualitative, natural and simple to assess from data. Next we take a brief and informal look at stationarity and associated concepts.

\section{Stationarity, ergodicity, AMS}
Very informally, assuming that the data are {\em stationary} means assuming that the time index itself bears no information. Thus, it does not matter whether the data we see are $X_0,X_1,\dots$ or they are in fact $X_{100},X_{101},\dots$. I.i.d.\ data obviously satisfy this assumption, as do, with some minor tweaks to be discussed below, most other models in wide use, such as Markov chains. Thus, stationarity may be used as an alternative hypothesis for testing other models. It is also suited for the cases when one knows next to nothing about the data, and thus wishes to make as few assumptions as possible.  
In fact, the assumption is so general that one wonders whether any inference is possible under stationarity alone. 
Indeed, if any inference is possible at all it is due the the associated property of ergodicity. 

A process is {\em ergodic} if the frequency of every finite-time event almost surely  converges to a constant. Thus, for binary-valued processes, the frequency of any word, such as 0, 01, or 011010, converges to some constant. We cannot say anything about the speed of this convergence, but the asymptotic property is already enough to make inference.   The  {\em ergodic decomposition} theorem establishes that every stationary process is a mixture of processes that are stationary and ergodic. Thus, a stationary process can be thought of as, first, before we start observing the data, drawing a stationary ergodic process (according to some prior distribution over such processes) and then using this stationary ergodic process to generate the data.  To  put it  simpler:  whenever we observe a stationary process, we observe, in fact, a stationary ergodic process. Thus, for most practical as well as many theoretical considerations, a stationary process  {\em is} a stationary ergodic process. 

Note that an ergodic process does not have to be stationary. A good example of an ergodic non-stationary process is a finite-state connected Markov chain with an initial distribution on the states that is different from the stationary distribution. Asymptotically, this process is equivalent to the Markov chain with the initial distribution  taken to be the stationary distribution. One can take mixtures of ergodic process, obtaining  processes that are called {\em asymptotically mean stationary} or AMS. An AMS process is such that  the frequencies of all finite-time events converge almost surely (but not necessarily to a constant). Since the definition of an ergodic process only involves its asymptotic properties, all the inference one can make about such processes concern their asymptotic behaviour. In this (asymptotic) sense, similar to stationary processes, an AMS process can be thought of, very roughly, as first drawing an ergodic process (according to some prior distribution over such processes) and then using this ergodic process to generate the data.  Again, for most purposes AMS processes are ergodic processes. In turn, ergodic processes are a certain generalisation of stationary ergodic processes: as in the Markov-chain example, they are equivalent in asymptotic. Another example of  can think of is taking a realization of a stationary ergodic process and adding some arbitrary prefix to it; or doing to it anything else  that does not affect  asymptotic frequencies. 

It is worth emphasizing that, with the exception of stationarity itself,  all the definitions we are using only tell us something about asymptotic properties of a process; moreover, for the purposes of statistical inference that we shall be exploring,  stationarity can only be used in conjunction with or via ergodicity (which, fortunately, can always be presumed via the ergodic decomposition theorem mentioned). Therefore, any results we should expect shall also be  about asymptotic properties of the algorithms that we shall construct. 

Thus,  there will be little difference for us in the course of this volume between ergodic processes and stationary ergodic processes, and between stationary processes and AMS processes.  
In fact, most of the results of this book do not require any other assumption than AMS or ergodicity.  Thus, they can be thought of as answering the question: 

\begin{svgraybox}
{\bf Stationarity-based statistical inference:} What statistical inference can one make under the only assumption that frequencies converge, without any guarantees on the speed of this convergence?
\end{svgraybox}

One exception is Chapter~\ref{ch:ht}, where we do need our processes to be stationary, and use some deeper results of ergodic theory. The other exception is  the impossibility result  concerning process discrimination (along with its implications) which applies to an even smaller class of process; since it is an impossibility result, this makes it stronger.

The main difference between the problems of statistical inference addressed in this volume  and those studied in the vast majority of statistical literature is the lack of any guarantees on the speed of convergence that one can use. 

In contrast, independence-based methods rely heavily on concentration-of-measure results that are used to bound the speed of convergence and, consequently, open the possibility to obtain finite-time bounds on the error of the resulting algorithms. In fact, the (conditional) independence assumptions are typically not used directly but rather through concentration of measure results. Mixing assumptions provide a generalisation that allows one to forego independence but still use the corresponding speed of convergence guarantees. Thus, one can think of independence-based models and their generalizations as studying the following general question: 

\begin{svgraybox}{\bf Independence-based statistical inference:}
  What statistical inference can one make under the assumption that  frequencies converge and the speed of this convergence can be bounded?
\end{svgraybox}
We shall see in this book that the difference between these two general questions is smaller than one might think, but sometimes the contrast between what is possible and what is not possible to do without any speeds of convergence is rather striking and even counter-intuitive.

\section{What is possible and what is not possible to infer from stationary processes}
It appears that, with the exception of the problem of probability forecasting to be mentioned below in this section, the prevailing view in the literature is that assuming only that a process is stationary and ergodic is not enough to make statistical inference.  This view may stem in part from the rather influential 1990 paper \cite{Ornstein:90} by Ornstein and Weiss. This paper  is full of deep and insightful results about $B$-processes, which is a set of processes smaller than that of stationary ergodic processes, but is rather dismissive of the general case. In particular, it makes statements such as   ``{\em In general, one cannot hope to guess the long-term behaviour from finite information}'' (referring to the non-$B$ case); ``{\em If a totally ergodic process is not $B$, then it cannot be approximated arbitrarily well by $k$-step Markov processes}.''  The work  \cite{Ornstein:90} goes further in this direction when it considers the problem of discrimination between two processes. This problem, also know as homogeneity testing, consists in telling, given two finite samples whose length, in this setting, is allowed to grow to infinity, whether they were generated by the same or different process distributions. It is stated in  \cite{Ornstein:90}  that, outside of the class of $B$-processes, {\em even this simple ``yes-no''  question of ``same-different'' cannot be answered in an effective way}. However, the example  used to demonstrate this statement only shows that it is not possible to estimate a certain distance, called $\bar d$ distance, between stationary ergodic processes that are not $B$. This is a rather different statement, and a one made about a different problem: indeed, in order to answer the ``same-different'' question, one might try to estimate any other distance or, more generally, use any algorithm whatsoever. Thus, the statement made in \cite{Ornstein:90} about the problem of discrimination can be at most considered a conjecture. The distance is also crucial to understanding the previous statements made: it is not possible to approximate a stationary ergodic process with $k$-step Markov processes {\em in $\bar d$-distance}, or to construct any other estimate of such a process that would be asymptotically consistent in terms of this distance. 

The picture changes dramatically if we change the distance between processes that we are trying to estimate. As we are going to see in this volume, using a different distance, it is possible to construct asymptotically consistent estimates of the distribution of an arbitrary stationary ergodic process, as well as to solve a variety of other interesting statistical problems. The distance we are going to use is well known, but had somehow remained largely unused. Gray \cite{Gray:88} calls it {\em distributional distance}, and this is the name we shall use here, despite its apparent ambiguity: indeed, it may seem to refer to any distance between distributions. 
As for the problem of discrimination between process distributions, it turns out that indeed,  as conjectured by  Ornstein and Weiss \cite{Ornstein:90}, it does not admit a solution if we only assume that the distributions are stationary ergodic. Interestingly, the same impossibility result holds for the smaller class of $B$ processes as well, for which it {\em is} possible to estimate the $\bar d$ distance, as shown in the same work \cite{Ornstein:90}. Thus, no amount of data may be sufficient to answer the simple ``same-different'' question about two process distributions. This result is formally demonstrated  in Section~\ref{s:hom}. 

Since these two problems, distance estimation and discrimination between processes, are crucial for the development of the material presented here, let us look at them at some more detail. 

Recall that one distance (or a metric~--- all distances considered in this volume are metrics unless stated otherwise) is {\em weaker}, in the topological sense, than another, if  every sequence\footnote{Here we are only concerned with separable metric spaces.} that converges in the former  converges in the latter, but the opposite does not necessarily hold. Thus, it is ``easier'' for a sequence to converge in a weaker distance, which makes it easier to construct a sequence of estimates of a process that converges to this process. %
 Likewise, given two data sequences, a weaker distance between the process distributions that generates these sequences is easier to estimate.
The distributional distance is weaker, in the topological sense, than the $\bar d$-distance.\footnote{To make complete sense of this sentence, we would need to define the distances formally first, which is done in the next chapter. We shall see that the definition of the distributional distance is ambiguous: it depends on a set of parameters, changing which may change the resulting topology. However, it is possible to make this statement formally correct.}  It is thus reasonable to expect that the former can be estimated for a larger class of processes than the latter. Indeed, as is shown in this volume, the distributional distance can be estimated for stationary ergodic processes, while, as is shown in \cite{Ornstein:90},  $\bar d$-distance  can be estimated for the smaller set of $B$-processes  but not for stationary ergodic processes. The strongest possible distance is the discrete 0-1 distance, which takes the value 0 if and only if two distributions are the same and 1 otherwise.  It is this distance that we are trying to estimate when answering the ``same-different'' question of process discrimination. It thus should be of no surprise that it is not possible to estimate it even for $B$-processes, even though it is possible to estimate it for smaller classes, such as, for example, i.i.d.\ processes. For many different problems, however,  it is enough to have consistent estimates of at least {\em some} distance between process distributions, and thus it makes sense to prefer weaker distances, since this allows one to consider wider sets of processes. We shall review shortly which problems of inference can be solved using consistent estimates of distributional distance (or, indeed, of any distance between process distributions).

Taking a different look at the problem of process  discrimination, one can see that it is linked to another fundamental impossibility result~--- the impossibility to establish the speed of convergence, say, of frequencies. The way we have defined ergodic processes, as all processes for which frequencies converge a.s.\ to a constant, makes it evident that this convergence may be arbitrary slow, so there is no guarantee on the speed. It is not so evident that such a guarantee does not exist  if we consider the set of all stationary ergodic processes (that is, adding the requirement of stationarity). The proof of the fact that indeed the convergence of frequencies can be arbitrary slow for stationary ergodic processes can be found, for example, in the excellent monograph \cite{Shields:96}, which also demonstrates the equivalence of the (unorthodox) definition that we adopt here to the more common one formulated in terms of shift-invariant sets.  Imagine now an algorithm that tries to solve the discrimination problem based on (consistent) estimates of some distance. It makes these estimates based on sampels of longer and longer size $n$. Suppose that these estimates keep approaching 0, let us say, exponentially with $n$. At some point one should reasonably expect the algorithm to say that the samples were generated by the same distribution. Suppose the estimated distance at this point is $\epsilon$. From this point on,  imagine that, as the sample size $n$ continues to grow, the estimate does not decrease at all but just stays $\epsilon$. Then, at some point, we should expect the algorithm to change its mind and to say that the samples were generated by the same distribution. At which point the estimates start decreasing again.  Since there is no guarantee on the speed of convergence (of anything), there is no way to ensure that the behaviour outlined cannot happen. In fact, the proof of the impossibility result is based on constructing, for any algorithm that presumably solves the problem of discrimination (and that may or may not be based on distance estimates), a process that tricks it into changing its mind {\em ad infinitum} in this fashion.

More generally, from the discussion above on the absence of speed of convergence guarantees, it should already be clear that:

\begin{svgraybox}
  Every algorithm that we may construct shall only have asymptotic performance guarantees in the considered setting. No finite-time bounds on the probability of error are possible.
\end{svgraybox}
From the practical point of view this is not in itself a hindrance: what  the fact that a result is asymptotic means, in practice, is that it holds when the data samples are large enough.

The only exception, where we do obtain results about what happens at every time step, is hypothesis testing. Here one  may wish to invert the question, by asking for which processes distributions can we have a certain level of error at a certain finite time. These questions are considered in Chapter~\ref{ch:ht}.

Having outlined the general framework and the main impossibility results, let us now briefly review the highlights of what is possible to achieve for stationary or stationary and ergodic processes. 

Perhaps the one important problem concerning stationary processes that has  not been deemed too difficult to solve and thus gained a fair bit of attention in the literature is the problem of {\em prediction} or {\em probability forecasting}. It consists in forecasting the probability of the next outcome $X_{n+1}$ conditional on the past observations $X_1,\dots,X_n$, where the sequence $X_1,\dots,X_n,X_{n+1},\dots$ is generated by an unknown stationary (ergodic) process distribution. This problem is of great practical importance, not in the least because it is intimately connected to the problem of data compression. Ample literature on this problem and its variations exist, which is why we do not cover it in this volume. This literature goes as far back as  \cite{Ornstein:78} for the prediction with the growing past problem, and includes \cite{BRyabko:88} that solves the forward-prediction problem for finite-alphabet processes, \cite{Algoet:92} for real-valued processes, as well as \cite{BRyabko:09,Morvai:96,Morvai:97,BRyabko:16} and others.

The problems covered in this volume are outlined in the  next section.

\section{Overview of the inference problems covered}
The first group of problems considered are those that are based directly on estimating a distance between process distributions.
Since we have an asymptotically consistent estimator of the distributional distance, we can answer questions of the form: given three samples $\x=(X_1,\dots,X_n)$, $\y=(Y_1,\dots,Y_m)$, $\z=(Z_1,\dots,Z_l)$, say whether the distribution of the process that generates $\z$ is closer to the distribution of $\x$ or to the one of  $\y$. The answer will be correct as long as the samples are long enough (that is, asymptotically correct). Some forms of this problem are known as {\em process classification} or the {\em three-sample problem}, and this is an example of a problem that we can solve. It generalizes to the problem of {\em clustering}: given   $N$ samples generated by $k$ different, unknown, stationary ergodic distributions, cluster them into $k$ groups according to the distribution that generates them.  Note that this problem can only be solved if $k$ is known. Indeed, the problem of discrimination corresponds to clustering just two samples, but with $k$ unknown (either 1 or 2), and already this case, as we have seen, has no solution. 

The next  problem to consider is {\em change-point estimation}. A sample $$X_1,\dots,X_{k},X_{k+1},\dots,X_n$$ is the concatenation of two samples $X_1,\dots,X_{k}$ and $X_{k+1},\dots,X_n$  generated by different stationary ergodic distributions. It is required to find or to approximate the change point~$k$. This is possible to do with an algorithm that essentially outputs the point that maximizes the estimated distance between what is before and after it in the sample. On the other hand, the related problem of {\em change-point detection}, which consists in saying whether the sample is generated by the same distribution or there is a change of distribution somewhere, admits no solution.  A generalisation of these problems to the case of  multiple change points presents a delicate interplay between what is possible and what is not.  We only briefly review the corresponding results in this volume (Section~\ref{s:chp}), referring the interested reader to the papers that present the full proofs \cite{Khaleghi:14,Khaleghi:15chp,Khaleghi:12mchp}.

As discussed above, one of the main reasons to study such general models as stationarity is to be able to use them as an alternative hypothesis in order to verify the validity of a smaller model. Thus, one may wish to test a  hypothesis $H_0$, which is a subset of the set of all stationary ergodic process distributions,  against its complement to this set, or against its different subset. For example, testing $H_0=$``the process is i.i.d.'' versus $H_1=$``the process is stationary ergodic and not i.i.d.''  As we have seen above, some rather simple hypotheses, such as process discrimination (known in the context of hypotheses testing as the hypothesis of homogeneity:  a hypothesis about a pair of processes that states that they have the same distribution) do not admit a consistent test, even in a very week asymptotic sense. Yet, as we shall see, some other hypotheses of practical significance, such as that the process is i.i.d.\ or that it is Markov, do admit a consistent test against the complement to the set of all stationary ergodic processes.  Thus, it appears interesting to study the general question of which hypotheses do and which do not admit a consistent test.  This is what we do in Chapter~\ref{ch:ht}. The main result is a topological ``if and only if'' criterion for the existence of a consistent test of an arbitrary subset of the set of all stationary ergodic processes against its complement. At the same time, a number of important and interesting questions remain open. In particular, this is the only chapter where we restrict the consideration to finite-alphabet processes, leaving the general case open for further research. Some of the interesting open problems related to hypotheses testing are presented in the end of Chapter~\ref{ch:ht}, while some more general ones are deferred to Chapter~\ref{ch:conc}, which is devoted to generalizations.

\chapter{Preliminaries}\label{ch:pre}
To simplify the exposition, we are considering (stationary ergodic) processes with the {\em alphabet} $A=\R$ or, in some cases, a finite set $A$.
The generalization from $A=\R$  to $A=\R^d$ is straightforward; moreover, the results can be extended
to the case when $A$ is a Polish (complete separable metric) space.
 The symbol $A^*$ is used for $\cup_{i=1}^\infty A^i$.
 Elements of $A^*$ are called words or sequences.

Let $\B_n$ be the Borel sigma-algebra of $A^n$, and $\B_\infty$ the the Borel sigma-algebra of~$A^\infty$. Let also $\B=\cup_{n=1}^\infty\B_n$.

Time-series distributions, {\em processes distributions} or simply {\em processes} are probability measures on $(A^\infty,\B_\infty)$.

We will be speaking about {\em samples}, typically denoted $\x,\y$ or $\z$, taking values in $A^*$. This is a short-hand notation for expressions like $X_{1..n}=(X_1,\dots,X_n)$ 
or $Y_{1..k}=(Y_1,\dots,Y_k)$ where $n=|\x|$ and $k=|\y|$ are lengths of the samples. The samples that we shall be considering are to be generated by process distributions, usually stationary or stationary ergodic, typically denoted $\rho$ or  $\rho_\x$, $\rho_\y$ (or other Greek letters) to make clear which sample they generate. This means that, say,  $\rho_\x$ is a (stationary ergodic) probability distribution over $(A^\infty,\B_\infty)$, and thus we are speaking about an $A^\infty$-valued random variable $(X_1,\dots,X_n,\dots)$ of which $X=(X_1,\dots,X_n)$ is the initial segment of length $n$.

\begin{definition}
 For a sequence $\x=X_{1..n}$ taking values in $A^n$ and a measurable $B\subset A^k$ with $k\in\N$ denote $\nu(\x,B)$
the frequency with which the sequence $\x$ falls in the set $B$
\begin{equation}\label{eq:freq}
\nu(\x,B):=\left\{ \begin{array}{rl}  {1\over n-k+1}\sum_{i=1}^{n-k+1}
I_{\{(X_i,\dots,X_{i+k-1})\in B\}} & \text{ if }n\ge k, \\
0 & \text{ otherwise.}\end{array}\right.
\end{equation}
\end{definition}

For example, $$\nu\big((0.5, 1.5, 1.2, 1.4, 2.1),([1.0,2.0]\times[1.0,2.0])\big)=1/2.$$

\section{Stationarity, ergodicity}

A process $\rho$ is {\em stationary}
if for any $i,j\in 1..n$ and $B \in \mathcal B$, 
we have $$\rho(X_{1..j} \in B)=\rho(X_{i..i+j-1} \in B).$$
A process $\rho$ is called {\em ergodic} if for every $B\in\mathcal B$ there exists a constant $v_B$ such that
with probability~1 we have 
$$\lim_{n\rightarrow\infty}\nu(X_{1..n},B) = v_B.$$ 
A process is called {\em stationary ergodic} if it is stationary and ergodic. 
The following statement follows from the ergodic theorem.
\begin{theorem}[ergodic theorem]
 For every stationary ergodic process $\rho$, we have
$$\lim_{n\rightarrow\infty}\nu(X_{1..n},B) = \rho(B)\as$$ 
\end{theorem}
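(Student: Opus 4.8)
The plan is to exploit the fact that the paper's definition of ergodicity already delivers the convergence of the frequencies, so that the only real work is to identify the limiting constant. Applying the definition of an ergodic process to the set $B\subset A^k$ (note $B\in\B_k\subset\B$), there is a constant $v_B$ with $\nu(X_{1..n},B)\to v_B$ with probability one. It therefore suffices to prove $v_B=\rho(B)$, and the whole argument rests on computing the expectation of the frequency and then passing to the limit.

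First I would compute $\E[\nu(X_{1..n},B)]$ for a fixed $n\ge k$. By linearity of expectation applied to the definition~\eqref{eq:freq} of $\nu$,
$$\E[\nu(X_{1..n},B)]=\frac{1}{n-k+1}\sum_{i=1}^{n-k+1}\E\big[I_{\{(X_i,\dots,X_{i+k-1})\in B\}}\big]=\frac{1}{n-k+1}\sum_{i=1}^{n-k+1}\rho\big((X_i,\dots,X_{i+k-1})\in B\big).$$
This is where stationarity enters: for every $i$ we have $\rho((X_i,\dots,X_{i+k-1})\in B)=\rho((X_1,\dots,X_k)\in B)=\rho(B)$, so each of the $n-k+1$ summands equals $\rho(B)$ and hence $\E[\nu(X_{1..n},B)]=\rho(B)$ for every $n\ge k$.

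Next I would pass to the limit. Since $0\le\nu(X_{1..n},B)\le 1$, the sequence is uniformly bounded, so the bounded (equivalently, dominated) convergence theorem applies to the a.s.\ convergence $\nu(X_{1..n},B)\to v_B$ and yields $\E[\nu(X_{1..n},B)]\to v_B$. Comparing this with the previous computation, whose left-hand side is constantly $\rho(B)$, forces $v_B=\rho(B)$, which is the claim.

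The argument is short precisely because the heavy lifting---the almost sure existence of the limit---is built into the definition of ergodicity adopted here; the one genuinely analytic point is the interchange of limit and expectation, which I expect to be the only step requiring care, and which is justified solely by the boundedness of indicator functions. The slightly awkward normalization $n-k+1$ (rather than $n$) causes no difficulty: it is exactly the number of length-$k$ windows in a length-$n$ sample, so every window contributes expectation $\rho(B)$ and the average equals $\rho(B)$ identically in $n$. If one instead prefers to start from the classical ergodic theorem, an alternative is to recognize $\nu(X_{1..n},B)$ as the Birkhoff average of the indicator $I_{\{(X_1,\dots,X_k)\in B\}}$ under the shift and to invoke triviality of the invariant $\sigma$-algebra for ergodic processes, making the Birkhoff limit the deterministic constant $\E[I_{\{(X_1,\dots,X_k)\in B\}}]=\rho(B)$; but the self-contained route above is preferable given that this volume deliberately avoids deeper ergodic theory.
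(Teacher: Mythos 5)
Your argument is correct, and it is worth noting that the paper itself does not actually prove this statement: it records it as a consequence of the (classical, Birkhoff-type) ergodic theorem and defers the proof to the references \cite{Gray:88,Shields:96}. Your route is different and, given the conventions of this volume, arguably preferable: because the paper \emph{defines} ergodicity directly as almost-sure convergence of the frequencies $\nu(X_{1..n},B)$ to a constant $v_B$, the only content of the theorem is the identification $v_B=\rho(B)$, and your two steps accomplish exactly that. The computation $\E[\nu(X_{1..n},B)]=\rho(B)$ for $n\ge k$ uses stationarity precisely in the form adopted in Chapter~2 ($\rho(X_{i..i+k-1}\in B)=\rho(X_{1..k}\in B)$ for every $i$), and the passage to the limit is legitimate since the frequencies are $[0,1]$-valued indicator averages, so bounded convergence applies and gives $\E[\nu(X_{1..n},B)]\to\E[v_B]=v_B$, whence $v_B=\rho(B)$. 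What the paper's citation buys is the connection to the orthodox formulation of ergodicity via shift-invariant sets (under which the a.s.\ convergence itself is the hard part); what your argument buys is a short, self-contained derivation that never leaves the frequency-based framework of this volume, consistent with its stated aim of avoiding deeper ergodic theory. The one thing to keep in mind is that your proof is only as strong as the definition it starts from: it does not establish the equivalence with the classical notion of ergodicity, which is the substantive fact the references are invoked for.
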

The proof of the ergodic theorem can be found, for example, in \cite{Gray:88,Shields:96}. The latter monograph also provides the connection to the more traditional way of defining ergodicity (in terms of shift-invariant sets); in particular, it demonstrates that the two approaches are equivalent.

The symbol  $\S$ is used for  the set of all stationary  processes on $A^\infty$,
 and the symbol $\mathcal E$ for the set  of all stationary ergodic processes.

The set of all process distributions $\mathcal P$ over $A^\infty$ can be endowed with the structure of probability space $(\mathcal P,\mathcal B_{\mathcal P})$ where $\mathcal B_{\mathcal P}$ can be taken to be the Borel sigma-algebra with respect to the distributional distance defined in Section~\ref{s:dd} below.

The link between stationary and stationary ergodic processes is provided by the so-called ergodic decomposition theorem, which states that every stationary process is a mixture of stationary ergodic processes. 
\begin{theorem}[Ergodic decomposition]
For any $\rho\in\S$ there is a measure $W_\rho$ on $(\mathcal P,\mathcal B_{\mathcal P})$, such 
that  %
$%
W_\rho(\mathcal E)=1 
$%
 \ \  and  $$\rho(B)=\int d W_\rho(\mu)\mu(B)$$
 for every $B\in\mathcal \B.$%
\end{theorem}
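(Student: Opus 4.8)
The plan is to realise $W_\rho$ as the law of a random stationary ergodic measure read off from the limiting frequencies of a $\rho$-typical sequence. Let $T$ denote the left shift on $A^\infty$, so that stationarity of $\rho$ means $\rho=\rho\circ T^{-1}$, and let $\mathcal I\subset\B_\infty$ be the sub-$\sigma$-algebra of shift-invariant sets. The engine is the conditional (Birkhoff) form of the ergodic theorem: for a merely stationary $\rho$, each $k\in\N$ and each $B\in\B_k$, the frequency $\nu(X_{1..n},B)$ converges $\rho$-almost surely, as $n\to\infty$, to $\E_\rho[\mathbf 1_{\{X_{1..k}\in B\}}\mid\mathcal I]$. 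For ergodic $\rho$ the limit is the constant $\rho(B)$ of the ergodic theorem stated above; the entire point of the decomposition is to handle the non-trivial $\mathcal I$.

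First I would fix a countable algebra $\mathcal A$ of cylinder sets generating $\B_\infty$ (cylinders with rational-endpoint boxes as bases), apply the ergodic theorem to each of its countably many members, and intersect the resulting full-measure events to obtain a single set $\Omega_0$ with $\rho(\Omega_0)=1$ on which all these limiting frequencies exist at once. For $x\in\Omega_0$ the assignment $B\mapsto\lim_n\nu(x_{1..n},B)$ is a finitely additive, $[0,1]$-valued set function on $\mathcal A$ that takes the value $1$ on the whole space and is shift-invariant, since deleting a fixed-length prefix does not change an asymptotic average; the Kolmogorov consistency relations across different $k$ likewise pass to the limit. I would then extend this to a genuine probability measure $\mu_x$ on $(A^\infty,\B_\infty)$. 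Countable additivity is automatic by compactness when $A$ is finite; for $A=\R$ (or Polish) I would obtain it from a tightness argument, transporting tightness of the marginals of $\rho$ to the limiting finite-dimensional marginals via stationarity. The map $x\mapsto\mu_x$ is $\mathcal I$-measurable by construction and measurable into $(\mathcal P,\mathcal B_{\mathcal P})$ because the evaluations $\mu\mapsto\mu(B)$ generate $\mathcal B_{\mathcal P}$.

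I expect the crux to be showing that $\mu_x\in\mathcal E$ for $\rho$-almost every $x$. By design the cylinder probabilities of $\mu_x$ equal the empirical frequencies along $x$; the task is to prove that $\mu_x$-almost every sequence has these same limiting frequencies, so that under $\mu_x$ the frequency of each $B$ converges to the constant $\mu_x(B)$, which is exactly ergodicity in the sense defined above. This self-consistency uses that $x\mapsto\mu_x$ is $\mathcal I$-measurable together with the idempotence of conditioning on $\mathcal I$, and making it rigorous requires a Fubini argument comparing the joint law of the base point and the shifted sequence. Granting this, I would set $W_\rho:=\rho\circ(x\mapsto\mu_x)^{-1}$, so that $W_\rho(\mathcal E)=1$ by the previous step; the representation then follows from the tower property, since for $B\in\B$ one has $\rho(B)=\E_\rho[\mathbf 1_B]=\E_\rho\big[\E_\rho[\mathbf 1_B\mid\mathcal I]\big]=\E_\rho[\mu_\cdot(B)]=\int\mu(B)\,dW_\rho(\mu)$, using $\mu_x(B)=\E_\rho[\mathbf 1_B\mid\mathcal I](x)$ for cylinders $B$.

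An alternative is the Choquet-theoretic route: $\S$ is convex and, in the topology of the distributional distance, compact and metrizable, its extreme points are exactly the ergodic measures, and Choquet's theorem produces a barycentric $W_\rho$ concentrated on them. This only relocates the difficulty, as identifying $\mathcal E$ with $\mathrm{ext}\,\S$ and verifying that $\S$ is a simplex again rests on the same ergodic-averaging facts; I would therefore favour the explicit frequency-based construction, which also matches the spirit of the rest of the development.
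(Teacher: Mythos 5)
The paper does not actually prove this theorem: it is quoted as background and referred to \cite[Theorem~7.4.1]{Gray:88}, so there is no in-paper argument to compare yours against. Your outline is the standard frequency-based construction of the decomposition and is essentially sound: Birkhoff's theorem applied simultaneously to a countable generating algebra of cylinders, extension of the limiting frequencies to a measure $\mu_x$ (automatic for finite $A$, via tightness inherited from the stationary marginals of $\rho$ for $A=\R$), stationarity of $\mu_x$ from shift-invariance of asymptotic averages, $W_\rho$ as the pushforward of $\rho$ under $x\mapsto\mu_x$, and the representation $\rho(B)=\int\mu(B)\,dW_\rho(\mu)$ from the tower property. The one place your sketch stops short is the crux you yourself flag, that $\mu_x\in\mathcal E$ for $\rho$-a.e.\ $x$; ``idempotence of conditioning plus Fubini'' is the right instinct but the step that actually closes it is a zero-variance argument. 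For each cylinder $B$ the limiting frequency $g_B(y):=\lim_n\nu(y_{1..n},B)$ is a bounded $\mathcal I$-measurable function, so $\E_\rho[g_B\mid\mathcal I]=g_B$ and $\E_\rho[g_B^2\mid\mathcal I]=g_B^2$ almost surely. Once one checks that $x\mapsto\mu_x$ is a regular conditional probability for $\rho$ given $\mathcal I$ (true on cylinders by construction, and on all of $\B_\infty$ by a monotone class argument), these identities read $\int g_B\,d\mu_x=g_B(x)$ and $\int g_B^2\,d\mu_x=g_B(x)^2$, whence $g_B$ has zero variance under $\mu_x$ and is $\mu_x$-a.s.\ equal to the constant $\mu_x(B)$ --- exactly ergodicity in the frequency sense the paper adopts; intersecting over the countably many generating $B$ gives $W_\rho(\mathcal E)=1$. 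Your Choquet-theoretic alternative is also viable but, as you correctly observe, relocates the same work into identifying $\mathrm{ext}\,\S$ with $\mathcal E$, so the explicit construction is the better fit for this text.
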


Furthermore, a process  is called {\em asymptotically mean stationary}, or {\em AMS} for short, if, for every $B\in\mathcal B$, the frequency of $B$ converges with probability~1. These limiting frequencies define the stationary measure $\bar \rho$, which, according to the preceding theorem, admits an ergodic decomposition. Asymptotically, $\rho$ and $\bar \rho$ are equivalent, and thus there will be little distinction between the two for us in this volume. For a detailed exposition of these results the reader is referred to \cite{Gray:88}, in particular to \cite[Theorem~7.4.1]{Gray:88}  that establishes ergodic decomposition for AMS processes.

\section{Distributional distance}\label{s:dd}

The general definition of the distributional distance is as follows.
\begin{definition}[distributional distance] 
Let $(B_k)_{k\in \N}$ be a set of finite-time events  each $B_k\in\mathcal B$, $k\in\N$ that generates $\mathcal B_\infty$, and let $(w_k)_{k\in\N}$ be a\ sequence of positive reals such that $\sum_{k\in\N}w_k=1$. For a pair of processes $\rho_1,\rho_2$ the   distributional distance $d(\rho_1,\rho_2)$ is defined  as
\begin{equation}\label{eq:ddis}
d(\rho_1,\rho_2):=\sum_{i=1}^\infty w_i |\rho_1(B_i)-\rho_2(B_i)|.
\end{equation}
\end{definition}

Note that there are two sets of parameters in this definition, $B_k$ and $w_k$, which we shall now make more specified.
Let us  first fix 
\begin{equation}\label{wk}
  w_k:=1/k(k+1).
\end{equation}
The choice of the sets $B_k$ is more significant. Different choices may result in  different topologies. 
In particular, some choices of $B_k$ make the set of all process distributions $\mathcal P$ compact with the topology of the distributional distance $d$. 
This is the case if the set $B_k, k\in\N$ is a {\em standard basis} of $\B_\infty$.   While there is a standard basis for $\B_\infty$ in the case of $A=\R$, unfortunately, as Gray \cite{Gray:88} notes, there is no easy construction for such a basis even for the space of reals $(\R,\B)$. In this volume, we shall not make much use of the notion of standard basis, but it will be important for us to have empirical estimates of the distributional distance. Therefore, we shall fix a specific choice of the sets $B_k$ for the case of discrete alphabets and for the case $A=\R$ (which is easily generalisable to $A=\R^d$, $d\in\N$); we shall also make the definition of the distributional distance more specific reflecting these choices.

\begin{definition}[Distributional distance for finitely-valued processes]
 Let the alphabet $A$ be finite. Define 
\begin{equation}\label{eq:ddisd}
d(\rho_1,\rho_2):=\sum_{k=1}^\infty w_k \sum_{B\in A^k} |\rho_1(B)-\rho_2(B)|.
\end{equation}
\end{definition}
While equivalent to the general one, this more-specified  formulation is  better suited for constructing practical algorithms: we are taking the differences in probabilities of each word of length $k$, and then take a weighted sum over all $k\in\N$. 

For real-valued processes, we shall fix the usual set of cylinders to put in the distributional distance.
Consider the  sets $B^{m,l}, m,l \in \N$ which are obtained via the partitioning of $A^m$ into  cubes  
of dimension $m$ and volume $2^{-ml}$, starting at the origin, and enumerated clockwise in each direction. 
\begin{definition}[Distributional distance for real-valued processes] \label{defn:dd}
Let $A=\R$. Define 
\begin{equation}\label{eq:ddisr}
d(\rho_1,\rho_2)=\sum_{m,l=1}^\infty w_m w_l \sum_{B\in B^{m,l}} |\rho_1(B)-\rho_2(B)|.
\end{equation}
\end{definition}

The general formulation~\eqref{eq:ddis} is more compact and thus more convenient for the theoretical analysis; we shall therefore use it in the proofs, while still assuming the concrete choice of the parameters $w_k$ and $B_k$ whenever necessary. The more specific formulations~\eqref{eq:ddisd} and~\eqref{eq:ddisr} are more convenient for constructing algorithms and empirical estimates.

Note, however,  that the   definition~\eqref{eq:ddisr} is not exactly equivalent to the general definition~\eqref{eq:ddis}. Indeed, each of the sets $B^{m,l}$ is infinite, and all the individual sets inside of $B^{m,l}$ are assigned the same weight $w_mw_l$. This is not a problem, since the total $\rho_1$- as well as $\rho_2$- probability of all the sets in $B^{m,l}$ is 1. Indeed, it is a simple exercise to check that the proofs in the subsequent chapters go through for either of the definitions. We therefore take the liberty to use the definition~\eqref{eq:ddis} in the proofs, but refer to the more-specified definition~\eqref{eq:ddisr} when speaking about the algorithms. The unconvinced  reader may note that the sets $B^{m,l}$ can be made finite but growing with $l$, i.e., defined so as to cover growing parts of the space $A^m$ with finer partitions, leaving all the rest of the space as a single element of the space $B^{m,l}$. This way, the partitions become finite and the triple sum in~\eqref{eq:ddisr} can be converted back to the single sum in~\eqref{eq:ddis} with a different choice of the weights $w_k$.

It is easy to see that $d$ is a metric (with any choice of the parameters).
When talking about closed and open subsets of $\S$ we assume the topology of~$d$.
With this  topology, the space $\mathcal P$ of process distributions is separable. The set $\S$ of stationary distributions is its closed subset. In addition, for the case of finite-valued alphabets, the sets $\mathcal P$ and $\S$ are complete and compact. (The general result \cite[Lemmas~8.2.1, 8.2.2]{Gray:88} says that $\mathcal P$ is complete and compact in case the generating set $(B_k)_{k\in\N}$ is standard; this is the case in our definition~\eqref{eq:ddisd} but not in~\eqref{eq:ddisr}.)
Proofs of these facts can be found in \cite{Gray:88}.

\chapter{Basic inference}\label{ch:basic}
In this chapter we consider some basic problems of statistical inference that underly the rest of the problems addressed in this volume. Namely, we shall see that the distributional distance can be estimated empirically, and consider some immediate implications of this fact. On the other hand, it is shown that there is no asymptotically consistent  solution to the problem of discrimination (homogeneity testing) for stationary ergodic processes. 

The main results of the chapter can be summarized as follows.
\begin{svgraybox}
\begin{itemize}
 \item The distributional distance between stationary ergodic processes can be estimated consistently.
 \item There is no consistent discrimination procedure for stationary ergodic processes: no matter how long the sequences are, it is not possible to say whether they were generated by the same or different distributions.
 \item Based on the estimates of the distributional distance, one can solve the three-sample problem: say which two of the given three samples were generated by the same distribution. 
\end{itemize}
\end{svgraybox}

\section{Estimating   the distance between processes and reconstructing a process}
The main building block of the approach presented in this book is the rather simple fact that the distributional distance can be estimated empirically, simply replacing unknown probabilities with frequencies. The resulting estimate is asymptotically consistent for arbitrary stationary ergodic processes. 

\begin{definition}[empirical distributional distance] For samples $\x,\y\in A^*$, define empirical distributional distance $\hat d(\x,\y)$ as
\begin{equation}\label{eq:edd1}
\hat d(\x,\y):=\sum_{i=1}^\infty w_i |\nu(\x,B_i)-\nu(\y,B_i)|.
\end{equation}
Similarly, we  can define the empirical distance when only one of the process measures is unknown:
\begin{equation}\label{eq:edd2}
\hat d(\x,\rho):=\sum_{i=1}^\infty w_i |\nu(\x,B_i)-\rho(B_i)|,
\end{equation}
where $\rho\in{\mathcal E}$  and $\x\in A^*$.
\end{definition}

The following  lemma establishes consistency of these estimates. 
\begin{lemma}\label{th:dd} Let two samples $\x=(X_1,\dots,X_k)$ and
$\y=(Y_1,\dots,Y_m)$ be generated by
stationary ergodic processes $\rho_\x$ and $\rho_\y$ respectively. Then
\begin{itemize}
\item[(i)] $\lim_{k,m\rightarrow\infty}\hat d(\x,\y)=d(\rho_\x,\rho_\y)\ \as$
\item[(ii)] $\lim_{k\rightarrow\infty}\hat d(\x,\rho_\y)=d(\rho_\x,\rho_\y)\ \as$
\end{itemize}
\end{lemma}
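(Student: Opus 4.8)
The plan is to reduce both statements to the ergodic theorem applied one coordinate at a time, and then to upgrade the resulting per-coordinate convergence to convergence of the whole weighted sum using the summability of the weights $(w_i)$. The crucial structural fact is that every summand $w_i|\nu(\x,B_i)-\nu(\y,B_i)|$ is bounded by $w_i$, since frequencies and probabilities all lie in $[0,1]$, while $\sum_i w_i=1<\infty$; this is what lets one control the infinite tail uniformly in the sample lengths $k,m$.

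First I would fix the almost-sure event on which all the termwise limits hold simultaneously. By the ergodic theorem, for each fixed $i$ there is a probability-one event on which $\nu(\x,B_i)\to\rho_\x(B_i)$ and $\nu(\y,B_i)\to\rho_\y(B_i)$. Intersecting these over the countably many indices $i\in\N$ yields a single event $\Omega_0$ of probability one on which $\nu(\x,B_i)\to\rho_\x(B_i)$ and $\nu(\y,B_i)\to\rho_\y(B_i)$ hold for \emph{every} $i$ at once. The rest of the argument is then carried out pointwise on $\Omega_0$.

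Next I would run an $\epsilon$-split of the sum. Fix $\epsilon>0$ and pick $N$ so large that $\sum_{i>N}w_i<\epsilon/2$. The quantity $|\hat d(\x,\y)-d(\rho_\x,\rho_\y)|$ is bounded by $\sum_i w_i\big|\,|\nu(\x,B_i)-\nu(\y,B_i)|-|\rho_\x(B_i)-\rho_\y(B_i)|\,\big|$. The tail $\sum_{i>N}$ is at most $\sum_{i>N}w_i<\epsilon/2$, because each bracketed term is at most $1$. For the finitely many remaining terms $i\le N$, on $\Omega_0$ the convergence supplied by the ergodic theorem gives, for all $k,m$ large enough, $\big|\,|\nu(\x,B_i)-\nu(\y,B_i)|-|\rho_\x(B_i)-\rho_\y(B_i)|\,\big|<\epsilon/2$ simultaneously for each $i\le N$ (finitely many conditions), so this part contributes at most $(\epsilon/2)\sum_{i\le N}w_i\le\epsilon/2$. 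Hence $|\hat d(\x,\y)-d(\rho_\x,\rho_\y)|<\epsilon$ for all sufficiently large $k,m$, which is~(i). Statement~(ii) is the same argument with $\nu(\y,B_i)$ replaced throughout by the known value $\rho_\y(B_i)$, so that only the convergence $\nu(\x,B_i)\to\rho_\x(B_i)$ is invoked.

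The only delicate point, and the step I would treat most carefully, is the uniform control of the tail, i.e.\ the interchange of the limit with the infinite sum. The termwise ergodic theorem alone is not enough, since it supplies no rate; what rescues the argument is precisely the weighted, summable structure of the distributional distance, which dominates the tail independently of the (uncontrolled) speed at which the individual frequencies converge. Equivalently, one may phrase this as a dominated-convergence argument over $\N$ equipped with the probability weights $w_i$, with the constant function $1$ as integrable dominating function; I would nonetheless present it in the explicit $\epsilon$-split form above, so as to avoid appealing to joint convergence ``in $(k,m)$'' through a sequential version of the dominated convergence theorem.
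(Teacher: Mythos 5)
Your proof is correct and follows essentially the same route as the paper's: both split the weighted sum at a finite index $J$ (your $N$) chosen so the tail of the weights is below $\epsilon/2$, control the tail uniformly in $k,m$ by the bound $w_i$ on each summand, and handle the finitely many remaining terms via the ergodic theorem applied termwise on a single probability-one event. The only cosmetic difference is that the paper thresholds each frequency error at $\epsilon/(4Jw_j)$ before summing, while you bound each bracketed term by $\epsilon/2$ and use $\sum_{i\le N}w_i\le 1$; these are interchangeable.
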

\begin{proof}
For any $\epsilon>0$ we can find such an index $J$ that
$\sum_{i=J}^\infty w_i<\epsilon/2$.
Moreover, by ergodic theorem, for each $j$ we have $\nu((X_1,\dots,X_k),B_j)\rightarrow \rho_\x(B_j)$
a.s., so that, with probability~1,
$$
 |\nu((X_1,\dots,X_k),B_j) - \rho(B_j)|<\epsilon/(4Jw_j)
$$ from some step $k$ on; define $K_j:=k$.
 Let
$K:=\max_{j<J}K_j$ ($K$ depends
on the realization $X_1,X_2,\dots$).
Define analogously $M$ for the sequence $(Y_1,\dots,Y_m,\dots)$. Thus, for $k>K$ and $m>M$ we have
\begin{multline*}
   |\hat d(\x,\y) - d(\rho_\x,\rho_\y)| =
\\
\left|\sum_{i=1}^\infty
w_i\big(|\nu(\x,B_i)-\nu(\y,B_i)| - |\rho_\x(B_i)-\rho_\y(B_i)| \big)
\right|\\
   \le \sum_{i=1}^\infty w_i\big(|\nu(\x,B_i)-\rho_\x(B_i)| +
|\nu(\y,B_i)-\rho_\y(B_i)| \big) \\
    \le \sum_{i=1}^J w_i\big(|\nu(\x,B_i)-\rho_\x(B_i)| +
|\nu(\y,B_i)-\rho_\y(B_i)| \big) +\epsilon/2\\
   \le \sum_{i=1}^Jw_i(\epsilon/(4Jw_i) + \epsilon/(4Jw_i))
+\epsilon/2 =\epsilon,
\end{multline*}
which proves the first statement. The second statement can be proven analogously.
\end{proof}

Note that the second statement of the lemma implies that  a stationary ergodic process (or an ergodic component of a stationary process) can be asymptotically reconstructed from growing segments of a sequence it generates. 

While we shall not make use of this fact, it is also instructive to note that memory-$k$ approximations of a stationary process $\rho$ converge to $\rho$ in distributional distance. This fact is rather easy to see from the definitions. %

\section{Calculating $\hat d$}
The expressions \eqref{eq:edd1}, \eqref{eq:edd2} may seem impossible to calculate, since they involve infinite sums. However, as we shall see in this section, they are  easy to calculate exactly and, furthermore, can  be approximated using only quasilinear computational resources.

First of all, note that, for a finite sample, for finite alphabets there are only finitely many non-zero summands in~\eqref{eq:edd1} and~\eqref{eq:edd2}. For  real-valued alphabets, there are infinitely many non-zero summands, but most of these can be collapsed, as they have the same value. 

We proceed  with the more-specified versions of the empirical distributional distance, which are empirical estimates of~\eqref{eq:ddisd} and~\eqref{eq:ddisr}. Given two samples $\x=X_{1..n_1}$ and $\y:=Y_{1..n_2}$, let $n:=\max\{n_1,n_2\}$ be the size of the longer sample and define
\begin{equation}\label{eq:eddisd}
\hat d(\x,\y):=\sum_{k=1}^{k_n} w_k \sum_{B\in A^k} |\nu(\x,B)-\nu(\y,B)|,
\end{equation}
and for real-valued processes
\begin{equation}\label{eq:eddisr}
\hat d(\x,\y):=\sum_{m=1}^{m_n} \sum_{l=1}^{l_n} w_m w_l \sum_{B\in B^{m,l}} |\nu(\x,B)-\nu(\y,B)|.
\end{equation}
where $k_n,m_n,l_n$ are integer-valued parameters that grow to infinity with $n$.

First of all, note that any values of $k_n,m_n,l_n$ that monotonically increase to infinity still give consistent estimates of the distributional distance (e.g., one can check that the argument of the proof of Lemma~\ref{th:dd} is unaffected).
On the other hand,  if we set $k_n\equiv\infty$ in~\eqref{eq:eddisd}, then the inner sum in~\eqref{eq:eddisd} still has at most $n$ non-zero terms for $k\le n$ and is 0 for $k>n$. This makes the precise calculation of~\eqref{eq:eddisd} at most quadratic.

 Moreover, there is no reason to calculate the summands corresponding to $k_n\approx n$ since they are clearly not good estimates of the corresponding probabilities. In fact, it is reasonable to set $k_n$ of order $\log n$, since longer subsamples are expected to be met at most once (see, for example, \cite{Kontoyiannis:94}).

Similarly, for~\eqref{eq:eddisr}, let us begin by  showing that calculating $\hat d$ is fully tractable with $m_n,l_n\equiv\infty$.
Observe that for fixed $m$ and $l$, the sum 
\begin{equation}\label{eq:psum}
T^{m,l}:=\sum_{B\in B^{m,l}} |\nu(X_{1..n_1},B)-\nu(Y_{1..n_2},B)|
\end{equation}
has not more than $n_1+n_2 -2m+2$  nonzero terms  (assuming $m\le n_1,  n_2$; the other case is obvious).
Indeed, there are $n_1-m+1$ tuples of size $m$ in the sequence $\x$ 
namely, 
$X_{1..m}, X_{2..m+1},\dots,X_{n_1-m+1..n_1}$ and likewise for the sequence $\y$.
Therefore, $T^{m,l}$ can be obtained by a finite number of calculations. 

Furthermore, let 
\begin{equation}\label{eq:smin}
s=\min_{\substack{X_i\ne Y_j\\i=1..n_1, j=1..n_2}}|X_i-Y_j|,
\end{equation} and 
observe that $T^{m,l}=0$ for all $m>n$ and for each $m$, for all $l>\log s^{-1}$ the term $T^{m,l}$ is constant.
That is, for each fixed $m$ we have
\begin{equation*}
 \sum_{l=1}^\infty w_mw_l T^{m,l}=w_mw_{\log s^{-1}} T^{m,\log s^{-1}}+\sum_{l=1}^{\log s^{-1}} w_mw_l T^{m,l}
\end{equation*}
so that we simply double the weight of the last nonzero term. 
(Note also that  $s$ is bounded above by the length of the binary precision in representing the random variables $X_i,Y_j$.)
Thus, even with $m_n,l_n\equiv\infty$ one can calculate $\hat d$ precisely.
Moreover, for a fixed $m \in 1..\log n$ and $l \in 1..\log s^{-1}$ for every sequence 
$\x$ the frequencies $\nu(\x,B),~B \in B^{m,l}$
may be calculated  using suffix trees or suffix arrays,  with $\O(n)$ worst case construction and search
complexity (see, e.g., \cite{Ukkonen:95}). %
 Searching all  $z:=n-m+1$ occurrences of 
subsequences of length $m$ results in $\O(m+z) = \O(n)$ complexity. 
This brings the overall computational complexity of \eqref{eq:eddisr} to $\O(n m_n \log s^{-1})$; 
this can potentially be improved using specialized structures, e.g.,~\cite{Grossi:05}. 

The parameters  $m_n$ play the same role as $k_n$ in the discrete case, and so can be set to be of order $\log n$ for the same reason. 
Finally, to choose  $l_n<\infty$ one can either fix some constant based on the bound on the precision in real computations, 
or choose it in such a way that each cell $B^{m,l_n}$ contains no more than $\log n$ points for all $m=1..\log n$ largest values of $l_n$.
Thus, we arrive at the following conclusion.
\begin{svgraybox}
Empirical  distributional distance~\eqref{eq:eddisd}, \eqref{eq:eddisr}  is efficiently computable, and can be approximated  using only quasilinear computational resources. 
\end{svgraybox}

\section{The three-sample problem}\label{s:three}
Let there be given three samples $\x,\y,\z\in A^*$. %
Each sample is generated by a stationary ergodic process $\rho_\x$,
$\rho_\y$ and $\rho_\z$ respectively.
Moreover, it is known that either $\rho_\z=\rho_\x$ or $\rho_\z=\rho_\y$,
but $\rho_\x\ne \rho_\y$.
We wish to construct a test that, based on the finite samples $\x, \y$
and $\z$ will tell
whether $\rho_\z=\rho_\x$ or $\rho_\z=\rho_\y$.

This problem is known under the names of three-sample problem and (process) classification. Its i.i.d.\ version, i.e., the case when each of the samples consists of i.i.d.\ random variables,  is one of the classical problems of mathematical statistics (e.g., \cite{Lehmann:86}). The case of dependent
time series was considered  in \cite{Gutman:89}, where a solution is presented under the finite-memory assumption. The material presented here is based on \cite{Ryabko:103s}.

Essentially, the problem is to answer the question ``which distribution is closer to which other distribution'' based on the three  samples given. The test we shall consider is doing this based on the estimates of the distributional distance.

Thus, let us consider a test that chooses the sample $\x$ or $\y$ according to whichever is closer to $\z$ in $\hat d$.
That is, we define the test $L(\x,\y,\z)$ as follows. If $\hat d(\x,\z)\le \hat d(\y,\z)$ then
the test says that the sample $\z$ is generated by the same
process as the sample $\x$, otherwise it says that the sample $\z$ is generated by the same
process as the sample $\y$.

\begin{definition}[Process classifier] Define the classifier $L:A^*\times A^*\times A^*\rightarrow\{\text{``x'',``y''}\}$ as follows
$$
L(\x,\y,\z):=\left\{ \begin{array}{rl} \text{ ``x'' }&\text{ if } \hat d(\x,\z)\le \hat d(\y,\z) \\ \text{``y''}  & \text{ otherwise, } \end{array}\right.
$$
for $\x,\y,\z\in A^*$.
 
\end{definition}

\begin{theorem} The test $L(\x,\y,\z)$ makes only a finite number of errors when
$|\x|, |\y|$ and $|\z|$ go to infinity, with probability 1:  if $\rho_\x=\rho_\z$ then
$$L(\x,\y,\z)=\text{ ``x'' }$$ from some $|\x|, |\y|, |\z|$ on with probability~1; otherwise
  $$L(\x,\y,\z)=\text{ ``y'' }$$ from some $|\x|, |\y|, |\z|$ on with probability~1.
\end{theorem}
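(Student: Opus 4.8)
The plan is to reduce everything to the almost-sure convergence already established in Lemma~\ref{th:dd}(i), exploiting the fact that $d$ is a metric and hence strictly separates distinct processes. By symmetry it suffices to treat the case $\rho_\z=\rho_\x$ (the case $\rho_\z=\rho_\y$ is identical with the roles of $\x$ and $\y$ interchanged). Here $d(\rho_\x,\rho_\z)=0$, while, because $d$ is a metric and $\rho_\x\ne\rho_\y$, the quantity $\delta:=d(\rho_\y,\rho_\z)=d(\rho_\y,\rho_\x)$ is strictly positive. The whole argument then hinges on showing that, with probability one, the empirical distance $\hat d(\x,\z)$ eventually falls below $\delta/2$ while $\hat d(\y,\z)$ eventually exceeds $\delta/2$, so that the comparison $\hat d(\x,\z)\le \hat d(\y,\z)$ defining the classifier is eventually decided correctly.

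Concretely, first I would apply Lemma~\ref{th:dd}(i) to the pair $(\x,\z)$: on a probability-one event $\Omega_1$ there is a threshold (depending on the realization) past which $\hat d(\x,\z)<\delta/2$ for all $|\x|,|\z|$ large enough. Next I would apply the same lemma to the pair $(\y,\z)$: on a probability-one event $\Omega_2$ there is a threshold past which $|\hat d(\y,\z)-\delta|<\delta/2$, hence $\hat d(\y,\z)>\delta/2$, for all $|\y|,|\z|$ large enough. On the intersection $\Omega_1\cap\Omega_2$, which still has probability one, both statements hold simultaneously, so for all $|\x|,|\y|,|\z|$ exceeding the (realization-dependent) maximum of the thresholds we have $\hat d(\x,\z)<\delta/2<\hat d(\y,\z)$, and therefore $L(\x,\y,\z)$ returns ``x'', the correct answer. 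Since the decision can be wrong only for sample sizes below the finite thresholds, the test makes only finitely many errors almost surely.

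The step requiring the most care is the bookkeeping of the limits: Lemma~\ref{th:dd}(i) is stated as a double limit as both sample lengths tend to infinity, and its proof indeed produces, for each realization, separate thresholds in each argument beyond which the approximation holds. I would therefore stress that what is needed is exactly this form of the statement, namely that a single (random but finite) bound on $|\x|,|\y|,|\z|$ guarantees the correct decision thereafter; this is what turns pointwise almost-sure convergence of the distances into the ``finitely many errors'' conclusion. The only other thing to verify is the strict positivity $\delta>0$, which is immediate from the fact, recorded earlier, that $d$ is a metric and hence vanishes only on identical processes. No finite-time or rate information enters anywhere, consistent with the purely asymptotic nature of the guarantees available in this setting.
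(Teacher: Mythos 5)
Your proposal is correct and follows essentially the same route as the paper's own proof: both apply Lemma~\ref{th:dd} together with the metric property of $d$ to conclude that $\hat d(\x,\z)\to 0$ while $\hat d(\y,\z)\to d(\rho_\y,\rho_\z)>0$ almost surely, whence the comparison is eventually decided correctly. Your version merely makes explicit the $\delta/2$ threshold and the intersection of the two probability-one events, which the paper leaves implicit.
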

\begin{proof}
 From the fact that $d$ is a metric and from Lemma~\ref{th:dd} we
conclude that $\hat d(\x,\z)\rightarrow0$ (with probability~1) if and only if
$\rho_\x=\rho_\z$. So, if $\rho_\x=\rho_\z$ then by assumption
$\rho_\y\ne\rho_\z$ and $\hat d(\x,\z)\rightarrow0$ a.s. while
$$\hat d(\y,\z)\rightarrow d(\rho_\y,\rho_\z)\ne0.$$
Thus in this case $\hat d(\y,\z)>\hat d(\x,\z)$ from some $|\x|, |\y|, |\z|$ on with probability~1, from which moment we have $L(\x,\y,\z)=\text{ ``x'' }$. The opposite case is analogous.
\end{proof}

\section{Impossibility of discrimination}\label{s:hom}
The following problem is variously known as {\em (process) discrimination}, {\em homogeneity testing} or {\em two-sample testing}. 
For the asymptotic version we consider here the name process discrimination is more suited, and so this is the name we adopt in this section, reserving the name homogeneity testing for other versions.

Two series of  observations $X_1,X_2,\dots,X_n,\dots$ and $Y_1,Y_2,\dots,Y_n,\dots$  are presented  sequentially.
On each time step $n$ we would like to say whether the distributions generating the samples $X_1,\dots,X_n$ and $Y_1,\dots,Y_n$ are the same or different.
 In this section we are after an impossibility result, so we restrict the consideration to the case of the binary-valued processes. 

Here we shall see that there is no asymptotically consistent discrimination procedure for the stationary ergodic processes with binary alphabet. The notion of consistency is perhaps the weakest one can think of: it is shown that for any discrimination procedure its  expected answer does not converge to the correct one at least for some processes. 
In fact, a stronger result is established, showing that there is no asymptotically consistent discrimination procedure for a smaller set of process, namely, that of $B$ processes.  
The class of B-processes  (formally defined below) is sufficiently wide to include, for example, $k$-order Markov processes and functions thereof, but, on the other hand,  it is a strict subset of the set of stationary ergodic processes. 

The material of this section is after \cite{Ryabko:10discr}. The additional definitions introduced  ($B$ processes, $\bar d$-distance) as well as the proof of the main theorem are not necessary for understanding the material of the subsequent chapters.

\subsection{Setup and definitions}
Let the alphabet be binary,   $A:=\{0,1\}$.
A {\em discrimination procedure} (or a homogeneity test) $D$ is a family of mappings $D_n: A^n\times A^n\rightarrow\{0,1\}$, $n\in\N$,  that 
maps a pair of samples $(X_1,\dots,X_n)$, $(Y_1,\dots,Y_n)$ into a binary (``yes'' or ``no'') answer: the samples are generated by 
different distributions, or they are generated by the same distribution. 

A discrimination procedure $D$ is {\em asymptotically consistent for a set $\C$ of process distributions} if for any two  distributions
$\rho_\x,\rho_\y\in\C$ independently generating the sequences $X_1,X_2,\dots$ and $Y_1,Y_2,\dots$ correspondingly the expected output converges to the correct answer: the following limit exists and the equality holds
\begin{equation}\label{eq:defdiscr}
\lim_{n\rightarrow\infty}\E  D_n((X_1,\dots,X_n), (Y_1,\dots,Y_n))=\left\{\begin{array}{ll}0 & \text{ if $\rho_\x=\rho_\y$,}\\ 1 &\text{ otherwise. }\end{array}\right.
\end{equation}
This is perhaps the weakest notion of correctness one can consider.

Clearly, asymptotically consistent discriminating procedures exist for many classes of processes, for example for the class of all i.i.d.\ processes (e.g. \cite{Lehmann:86})
and various parametric families. Indeed, for i.i.d.\ samples one usually requires stronger forms of consistency than the asymptotic notion considered here.

To be able to define the set of $B$-processes, we need to introduce another distance between process distributions, the $\bar d$ distance. 

For two finite-valued stationary processes $\rho_\x$ and $\rho_\y$  the {\em $\bar d$-distance} $\bar d(\rho_\x,\rho_\y)$ is said to be less than $\epsilon$ 
 if there exists a single stationary process $\nu_{xy}$ on pairs $(X_n,Y_n)$, $n\in\N$, such that $X_n$, $n\in\N$ are distributed according 
to $\rho_\x$ and $Y_n$ are distributed according to $\rho_\y$ while 
\begin{equation}\label{eq:dbar} \nu_{xy}(X_1\ne Y_1)\le \epsilon.\end{equation}
The infimum of the $\epsilon$'s for which a coupling can be found such that (\ref{eq:dbar}) is satisfied is taken to be the $\bar d$-distance
between $\rho_\x$ and $\rho_\y$. 

\begin{definition}
 A process is called a {\em  $B$-process} (or a Bernoulli process) if it is in the $\bar d$-closure of the set of all aperiodic stationary ergodic $k$-step Markov processes, where $k\in\N$.
\end{definition}
For more information on $\bar d$-distance and $B$-processes %
see~\cite{Ornstein:74}. 

\subsection{The main result}

\begin{theorem}\label{th:discr}
 There is no asymptotically consistent discrimination procedure for the set of all $B$-processes.
\end{theorem}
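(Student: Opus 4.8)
The plan is to prove this impossibility result by a diagonalization argument: assuming for contradiction that some discrimination procedure $D$ were asymptotically consistent on the class of $B$-processes, I would construct a single pair of $B$-processes (or a sequence of pairs) on which the expected output of $D$ fails to converge to the correct answer. The key engine will be the slow convergence of frequencies, combined with the fact that the set of $B$-processes is $\bar d$-closed and $\bar d$-dense enough to contain highly varied behaviors. The informal ``changing its mind \emph{ad infinitum}'' picture sketched in the introduction is exactly what I would make rigorous.

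First I would fix an arbitrary candidate procedure $D=(D_n)_{n\in\N}$ and set up two ``target'' regimes to confuse it: one in which the two samples are generated by the \emph{same} $B$-process (so the correct limit is $0$), and one in which they are generated by two \emph{distinct} $B$-processes (correct limit $1$). The idea is to splice these regimes together along an increasing sequence of block lengths $n_1<n_2<\cdots$. Because consistency of $D$ on any fixed pair only controls the behavior of $\E D_n$ for $n$ large, I can choose each $n_{i+1}$ large enough that, on a sample that has behaved like regime~A up to time $n_i$, the procedure is already committed to the answer dictated by regime~A — and then switch to regime~B on the next block, forcing it to commit to the opposite answer, and so on. The construction must ensure that the resulting spliced process is itself stationary ergodic and in fact a $B$-process; this is where the $\bar d$-closure property of the $B$-class is essential, since the limiting process is obtained as a $\bar d$-limit of the approximating block constructions.

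The main obstacle is precisely the construction of a \emph{legitimate} $B$-process realizing this adversarial block structure, rather than merely an arbitrary non-stationary sequence. Concatenating blocks naively destroys stationarity, so I would instead build, for each stage, a stationary ergodic process that mimics the desired block on a long initial segment with high probability (exploiting that frequencies converge arbitrarily slowly, so a stationary ergodic process can look like anything for an arbitrarily long but finite time), and then take a $\bar d$-limit. Controlling simultaneously that (i) the limit remains a $B$-process, (ii) the two coordinates remain close or far as required, and (iii) $D$ is genuinely forced to oscillate, is the delicate part; I expect to need a careful interleaving of quantifiers, choosing each new process depending on the thresholds exhibited by $D$ on the previously fixed processes.

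Once the adversarial pair $(\rho_\x,\rho_\y)$ of $B$-processes is in hand, the conclusion is immediate: by construction $\E D_n$ has a subsequence approaching the answer appropriate to regime~A and another subsequence approaching the answer appropriate to regime~B, so the limit in~\eqref{eq:defdiscr} either fails to exist or fails to equal the correct value. This contradicts the assumed asymptotic consistency of $D$ on the $B$-processes, completing the proof. The strength of the result — ruling out even this weakest expected-value notion of consistency, and already on the restricted $B$-class for which the $\bar d$-distance \emph{is} estimable — comes from the fact that the oscillation is forced for \emph{every} procedure $D$ whatsoever, distance-based or not.
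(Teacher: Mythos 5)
You have correctly identified the high-level strategy --- diagonalize against $D$, force $\E D_n$ to oscillate, and keep the adversarial object inside the $B$-class by exhibiting it as a $\bar d$-limit of functions of Markov chains --- and you have correctly located where the difficulty sits. But the proposal stops exactly where the proof actually happens, and the picture you sketch for getting past it (splice ``regime A'' blocks and ``regime B'' blocks along the time axis) does not work as stated. For a \emph{fixed} pair $(\rho_\x,\rho_\y)$ the two marginal distributions are what they are: either $\rho_\x=\rho_\y$ or not, and this cannot alternate with the sample size $n$. So the oscillation cannot come from the pair ``being in regime A at some sample sizes and regime B at others''; it has to come from a single process $\rho$ with $\rho_\x=\rho_\y=\rho$ (so the correct answer is identically $0$) whose two \emph{independent} runs nevertheless look, with probability bounded away from zero and for arbitrarily long stretches, like runs of two genuinely different processes. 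The mechanism that achieves this is the essential missing ingredient: a return-to-zero chain on $\N$ (return to $0$ with probability $\delta$, else advance) decorated with ``switch'' and ``reset'' states, each switch being resolved $up$ or $down$ with probability $1/2$ independently by each run. Conditioned on both runs starting at state $0$ (probability $\delta^2$), they resolve the $(2j{+}1)$-th switch oppositely with probability $1/2$ and then emit the outputs of $\rho_{u2j+1}$ versus $\rho_{d2j+1}$ until time $t_{2j+1}$, forcing $\E D_{t_{2j+1}}>\frac{1}{2}(\delta^2-\epsilon)$ as in~\eqref{eq:odd2}, while at even times $\E D_{t_{2j}}<\epsilon+(1-\delta^2)$ as in~\eqref{eq:even2}. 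Note also that this yields oscillation between two constants separated by a gap (e.g.\ with $\delta=0.9$, $\epsilon=0.1$), not subsequences tending to $0$ and to $1$ as you assert; the weaker conclusion still contradicts~\eqref{eq:defdiscr}, but your stated conclusion overshoots what any such construction delivers.

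A second, smaller gap: you appeal to slow convergence of frequencies to build stationary ergodic processes that ``look like anything for a long but finite time,'' but you never verify that the limiting object is stationary, ergodic and $B$. In the paper this is a separate argument (Step 2): stationarity follows from $d(\rho,\rho_{2j})\to0$ together with closedness of $\S$, while membership in the $B$-class (and ergodicity, for free) follows from an explicit coupling giving $\bar d(\rho,\rho_{2j})\le\sum_{k>k_{2j+1}}M_\rho(k)\to0$ plus $\bar d$-closedness of the $B$-processes. Your proposal names these facts but supplies no coupling, and without the concrete chain construction there is nothing to couple. In short: right skeleton, but the two load-bearing steps --- the randomized switch that decorrelates independent runs of a single process, and the explicit $\bar d$-coupling certifying that the limit is a $B$-process --- are both absent.
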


Before presenting the proof, it is worth putting this result in the context of other results on $B$-processes. 
As  mentioned in the introduction, Ornstein and Weiss~\cite{Ornstein:90} construct an estimator  $\bar s_n$ %
such that 
\begin{equation}\label{eq:bes}
 \lim_{n\to\infty} \bar s_n((X_1,\dots,X_n),(Y_1,\dots,Y_n))=\bar d(\rho_1,\rho_2)\ \ \rho_1\times\rho_2{\text{--a.s.}}
\end{equation}
if both processes $\rho_1$ and $\rho_2$ generating the samples $X_i$ and $Y_i$ respectively are $B$-processes.
In the same work it is shown that there is no estimator $\bar s_n$ for which~(\ref{eq:bes}) holds for every pair $\rho_1, \rho_2$
of stationary ergodic processes.

Comparing these result to those on distributional distance presented in the previous section (namely, Lemma~\ref{th:dd}),  we can say that the stronger the distance the harder it is to estimate:
the distributional distance can be consistently estimated for stationary ergodic processes, the $\bar d$ distance
can be consistently estimated for $B$-processes but not for stationary ergodic processes, while the strongest
possible distance--- the one that gives discrete topology, cannot be consistently estimated for $B$-processes, 
as  shown in this section.

It is also worth noting that the proof given below yields a slightly stronger results, namely, the impossibility of discrimination between finite-dimensional (including single-dimensional) marginals of the processes. Specifically, correctness of the discrimination procedure~\eqref{eq:defdiscr}
can be replaced with the following
\begin{equation}\label{eq:defdiscr2}
\lim_{n\rightarrow\infty}\E  D_n((X_1,\dots,X_n), (Y_1,\dots,Y_n))=\left\{\begin{array}{ll}0 & \text{ if $\rho_\x(X_1=0)=\rho_\y(Y_1=0)$,}\\ 1 &\text{ otherwise. }\end{array}\right.
\end{equation}
with the same proof carrying over.

The proof, presented below, is by contradiction. It is assumed that a consistent discrimination procedure exists, and a process is exhibited 
that will trick such a procedure to give divergent results.
The construction on which the proof is based uses  the  ideas of the ``random walk over the diagonal'' construction  used in \cite{BRyabko:88}  to demonstrate that  
consistent prediction for stationary ergodic processes is impossible (see also its exposition in \cite{Gyorfi:98}).
\begin{proof} 
We will assume that asymptotically consistent discrimination  procedure $D$ for the class of all $B$-processes exists, and will construct a $B$-process $\rho$ such that 
if both sequences $X_i$ and $Y_i$, $i\in\N$ are generated by $\rho$ then $\E D_n$ diverges; this contradiction will prove the theorem.

The scheme of the proof is as follows. 
On Step 1 we construct  a sequence of processes $\rho_{2k}$,  $\rho_{d2k+1}$, and $\rho_{u 2k+1}$, where $k=0,1,\dots$.
On Step 2 we construct a process $\rho$, which is shown to be  the limit of  the sequence $\rho_{2k}$, $k\in\N$, in $\bar d$-distance. 
On Step 3 we show  that two independent runs of the process $\rho$ have 
a property that (with high probability) they first behave like two runs  of a single process $\rho_0$, then like two runs
of two different processes $\rho_{u1}$ and $\rho_{d1}$, then like two runs of a single process $\rho_2$, and so on, thereby showing
that the test $D$ diverges and obtaining the desired contradiction.

Assume that there exists an asymptotically consistent discriminating procedure~$D$. 
Fix some $\epsilon\in(0,1/2)$ and $\delta \in[1/2,1)$, to be defined on Step~3.

{\em Step 1.} We will construct the sequence of process $\rho_{2k}$,  $\rho_{u 2k+1}$, and $\rho_{d 2k+1}$, where $k=0,1,\dots$.

{\em Step 1.0.}
Construct the process $\rho_0$ as follows. A Markov chain $m_0$ is defined on the set $\N$ of states.
From each state $i\in\N$ the chain passes to the state $0$ with probability $\delta$ and to the state ${i+1}$ with
probability $1-\delta$. 
 With transition probabilities so defined, the chain possesses a unique stationary distribution $M_0$ on the set $\N$, 
 which can be calculated explicitly using e.g. \cite[Theorem VIII.4.1]{Shiryaev:96}, and is as follows:  $M_0(0)=\delta$,  $M_0(k)=\delta(1-\delta)^k$, 
  for all  $k\in\N$.
Take this distribution as the initial distribution over the states. 

The function $f_0$ maps the states to the output alphabet $\{0,1\}$ as follows: $f_0(i)=1$ for every $i\in\N$.
 Let $s_t$ be the state of the chain at time $t$. The process $\rho_0$
is defined as $\rho_0= f_0(s_t)$ for  $t\in\N$. %
As a result
of this definition, the process $\rho_0$ simply outputs $1$ with probability $1$ on every time step (however, by using different functions $f$ 
we will have less trivial processes in the sequel).  Clearly, the constructed process is stationary ergodic and a B-process.
So, we have defined the chain $m_0$ (and  the process $\rho_0$) up to a parameter $\delta$.

{\em Step 1.1.} We begin with the process $\rho_0$ and the chain $m_0$  of the previous step.
 Since the test D is asymptotically consistent  we will have
$$
 \E_{\rho_0\times\rho_0}  D_{t_0}((X_1,\dots,X_{t_0}), (Y_1,\dots,Y_{t_0})) <\epsilon,
$$ from some  $t_0$ on, where both samples
$X_i$ and $Y_i$ are generated by $\rho_0$ (that is, both samples consist of 1s only). 
Let $k_0$ be such an index that 
the chain $m_0$ starting from the state $0$ with probability $1$ does not reach the state $k_0-1$ by time $t_0$ (we can take $k_0=t_0+2$). 

Construct two processes $\rho_{u1}$ and $\rho_{d1}$ as follows. 
They are also based on the Markov chain $m_0$, but the functions $f$ are different. 
The function $f_{u1}:\N\rightarrow\{0,1\}$ is defined as follows: $f_{u1}(i)=f_0(i)=1$ for $i\le k_0$ and  $f_{u1}(i)=0$ for $i>k_0$.
The function $f_{d1}$ is identically $1$ ($f_{d1}(i)=1$, $i\in\N$).
The processes $\rho_{u1}$ and $\rho_{d1}$
are defined as $\rho_{u1}= f_{u1}(s_t)$  and $\rho_{d1}= f_{d1}(s_t)$ for $t\in\N$. Thus the process $\rho_{d1}$ will again produce only 1s, but the process $\rho_{u1}$ will occasionally produce~0s.

{\em Step 1.2.}
Being run on two samples generated by the processes $\rho_{u1}$ and $\rho_{d1}$ which both start from the state 0, 
 the test $D_n$ on the first $t_0$ steps produces many 0s,
since on these first $k_0$ states all the functions $f$, $f_{u1}$ and $f_{d1}$ coincide.
 However, since the processes are different and the test is asymptotically consistent (by assumption), the test starts producing 1s, until by  a certain 
 time step $t_1$ almost all answers are 1s. 
Next we will construct the process $\rho_2$ by ``gluing'' together  $\rho_{u1}$ and $\rho_{d1}$ and continuing them in such a way that, being 
run on two samples produced by $\rho_2$ the test first produces 0s (as if the samples were drawn from $\rho_0$), then, with probability close to 1/2 it will
produce many 1s (as if the samples were from $\rho_{u1}$ and $\rho_{d1}$) and then again 0s. 

The process $\rho_2$ is the pivotal point of the construction, so we give it in some detail.  On step 1.2a we present 
the construction of the process, and on step 1.2b we show that this process is a $B$-process by demonstrating that 
it  is equivalent to a (deterministic) function of a Markov chain.

{\em Step 1.2a.}
Let $t_1>t_0$ be such a time index that 
$$
 \E_{\rho_{u1}\times\rho_{d1}}  D_k((X_1,\dots,X_{t_1}), (Y_1,\dots,Y_{t_1})) >1-\epsilon,
$$ where the samples
$X_i$ and $Y_i$ are generated by $\rho_{u1}$ and $\rho_{d1}$ correspondingly (the samples are generated independently; that is, the process are based on two 
independent copies of the Markov chain $m_0$).
 Let $k_1>k_0$ be such an index that 
the chain $m$ starting from the state 0 with probability $1$ does not reach the state $k_1-1$ by time $t_1$.

Construct the process $\rho_2$ as follows (see fig.~\ref{fig:rho2}).
\begin{figure*}[h]\caption{The processes $m_2$ and $\rho_2$. {\small The states are depicted as circles, the arrows symbolize transition probabilities: from every state the process returns to 0 with probability $\delta$ or goes to the next state with probability $1-\delta$. From the switch $S_2$ the process
passes to the state indicated by the switch (with probability 1); here it is the state  $u_{k_0+1}$. When the process passes through the reset { $\bf R_2$} the switch $S_2$ is 
set to either $up$ or $down$ with equal probabilities. (Here $S_2$ is in the position $up$.) The function $f_2$ is 1 on all states except $u_{k0+1},\dots,u_{k1}$ where it is 0; $f_2$ applied to the states output by $m_2$ defines $\rho_2$.}}\label{fig:rho2}
\begin{picture}(200,120)(-0,-50)
\put(20,10){\circle{5}}
\put(18,3){\text{\tiny{0}}}
\put(22,10){\vector(1,0){20}}
\put(22,11){\text{\tiny{$1-\delta$}}}

\put(46,10){\circle{5}}
\put(45,3){\text{\tiny{1}}}
\put(49,10){\vector(1,0){20}}
\put(49,11){\text{\tiny{$1-\delta$}}}

\put(73,10){\text{{...}}}
\put(85,10){\vector(1,0){20}}
\put(85,11){\text{\tiny{$1-\delta$}}}
\put(110,10){\circle{5}}
\put(107,3){\text{\tiny{$k_0$}}}

\put(115,10){\vector(1,0){20}}
\put(112,11){\text{\tiny{$1-\delta$}}}
\put(135,4){\text{\tiny{$S_2$}}}
\put(140,10){\circle*{5}}
\put(143,13){\vector(1,1){2}}
\put(150,15){\text{\tiny{$up\ (1/2)$}  \ \ \ \ $R_2$\ \  resets $S_2$ stochastically }}
\put(150,1){\text{\tiny{$down\ (1/2)$} probability}}

\put(145,15){\vector(1,1){20}}
\put(151,28){\text{\tiny{1}}}

\put(168,40){\circle{5}}
\put(165,35){\text{\tiny{$u_{k_0+1}$}}}
\put(170,40){\vector(1,0){20}}
\put(170,41){\text{\tiny{$1-\delta$}}}
\put(194,40){\text{{...}}}
\put(206,41){\vector(1,0){20}}
\put(206,41){\text{\tiny{$1-\delta$}}}
\put(231,40){\circle{5}}
\put(228,33){\text{\tiny{$u_{k_1}$}}}

\put(146, 2){\vector(1,-1){20}}
\put(149,-13){\text{\tiny{1}}}

\put(168,-18){\circle{5}}
\put(165,-25){\text{\tiny{\ \ $d_{k_0+1}$}}}
\put(170,-18){\vector(1,0){20}}
\put(170,-17){\text{\tiny{$1-\delta$}}}
\put(194,-18){\text{{...}}}
\put(206,-18){\vector(1,0){20}}
\put(206,-17){\text{\tiny{$1-\delta$}}}
\put(231,-18){\circle{5}}
\put(228,-25){\text{\tiny{\ \ \ $d_{k_1}$}}}

\put(237,37){\vector(1,-1){23}}
\put(249,25){\text{\tiny{$1-\delta$}}}
\put(247,-12){\text{\tiny{$1-\delta$}}}
\put(237,-18){\vector(1,1){23}}

\put(261,8){\text{\small{$\mathbf{R_2}$}}}
\put(150,8){\text{\tiny{$>$ - - - - - - - - - - - - - - - - - - - - - - - - - - - - - - - - - - - -}}}

\put(275,10){\vector(1,0){20}}
\put(279,11){\text{\tiny{$1$}}}
\put(299,10){\circle{5}}
\put(296,3){\text{\tiny{$k_1+1$}}}
\put(303,10){\vector(1,0){20}}
\put(303,11){\text{\tiny{$1-\delta$}}}
\put(326,10){\text{{...}}}

\put(35,60){\text{\small{$\leftarrow\leftarrow\leftarrow\leftarrow\leftarrow\leftarrow\leftarrow\leftarrow\leftarrow\leftarrow\leftarrow\leftarrow\leftarrow\leftarrow\leftarrow\leftarrow\leftarrow\leftarrow\leftarrow\leftarrow\leftarrow\leftarrow\leftarrow\leftarrow\leftarrow\leftarrow\leftarrow\leftarrow\leftarrow\leftarrow\leftarrow\leftarrow\leftarrow\leftarrow\leftarrow$}}}
\put(46,15){\vector(0,1){46}}
\put(49,30){\text{\tiny{$\delta$}}}
\put(110,15){\vector(0,1){46}}
\put(113,30){\text{\tiny{$\delta$}}}

\put(167,43){\vector(0,1){16}}
\put(170,50){\text{\tiny{$\delta$}}}
\put(231,43){\vector(0,1){16}}
\put(234,50){\text{\tiny{$\delta$}}}

\put(167,-20){\vector(0,-1){27}}
\put(162,-35){\text{\tiny{$\delta$}}}
\put(231,-20){\vector(0,-1){27}}
\put(226,-35){\text{\tiny{$\delta$}}}

\put(298,15){\vector(0,1){46}}
\put(301,30){\text{\tiny{$\delta$}}}

\put(35,60){\vector(-1,-3){16}}
\put(35,-45){\vector(-1,3){16}}

\put(35,-50){\text{\small{$\leftarrow\leftarrow\leftarrow\leftarrow\leftarrow\leftarrow\leftarrow\leftarrow\leftarrow\leftarrow\leftarrow\leftarrow\leftarrow\leftarrow\leftarrow\leftarrow\leftarrow\leftarrow\leftarrow\leftarrow\leftarrow\leftarrow\leftarrow\leftarrow\leftarrow\leftarrow\leftarrow\leftarrow\leftarrow\leftarrow\leftarrow\leftarrow\leftarrow\leftarrow\leftarrow$}}}
\end{picture}
\end{figure*}
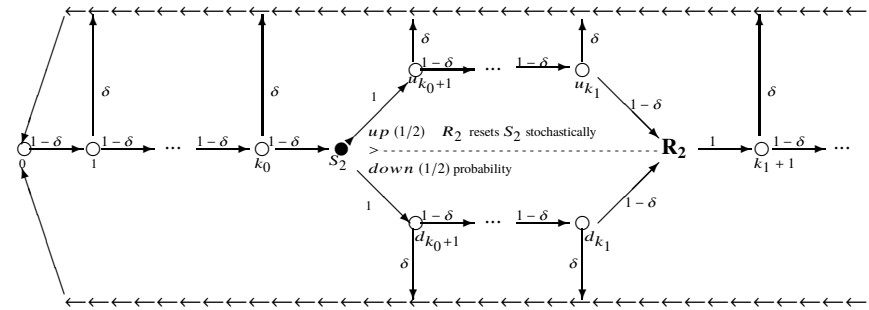
 It is based on a chain $m_2$ on which Markov assumption is violated. 
The transition probabilities on states $0,\dots,k_0$ are the same as for the Markov chain $m$ (from each
state return to 0 with probability $\delta$ or go to the next state with probability $1-\delta$). 

There are two ``special'' states: the ``switch'' $S_2$ and 
the ``reset'' $R_2$.  From the state $k_0$ the chain passes with probability $1-\delta$  to the ``switch'' state
$S_2$. 
The switch $S_2$ can itself have two values: $up$ and $down$. If $S_2$ has the value $up$ then from $S_2$ the chain passes to the state $u_{k_0+1}$ with probability 1, while
if $S_2=down$ the chain goes to $d_{k_0+1}$, with probability 1.  If the chain reaches the state $R_2$ then 
the value of $S_2$ is set to $up$ with probability 1/2 and with probability 1/2 it is set to $down$.
 In other words, the first transition from $S_2$ is random 
(either to $u_{k_0+1}$ or to  $d_{k_0+1}$ with equal probabilities) and then this decision is remembered until the ``reset'' state $R_2$ is visited, whereupon the
switch again assumes the values $up$ and $down$ with equal probabilities.

The rest of the transitions are as follows. From each state $u_i$, $k_0\le i\le k_1$ the chain passes to the state $0$ with probability $\delta$ and to the next state $u_{i+1}$ with probability $1-\delta$.
From the state $u_{k_1}$ the process goes with probability $\delta$ to 0 and with probability $1-\delta$ to the ``reset'' state $R_2$. The same with states $d_i$: 
for $k_0<i\le k_1$ the process returns to 0 with probability $\delta$ or goes to the next state $d_{i+1}$ with probability $1-\delta$, where the next state
for $d_{k_1}$ is the ``reset'' state $R_2$. From $R_2$ the process goes with probability 1 to the state $k_1+1$ where from the chain continues ad infinitum:
to the state 0 with probability $\delta$ or to the next state $k_1+2$ etc. with probability $1-\delta$.

The initial distribution on the states is defined as follows.
 The probabilities of the states $0..k_0, k_1+1,k_1+2,\dots$ are the same
as in the Markov chain $m_0$, that is, $\delta(1-\delta)^j$, for $j=0..k_0, k_1+1,k_1+2,\dots$. 
For the states $u_j$ and $d_j$, $k_0<j\le k_1$ define  their initial probabilities to be 1/2 of the probability of the corresponding
state in the chain $m_0$, that is $m_2(u_j)=m_2(d_j)=m_0(j)/2=\delta(1-\delta)^j/2$. Furthermore, if the chain starts in a state $u_j$,
$k_0<j\le k_1$, then the value of the switch $S_2$ is $up$, and if it starts in the state $d_j$ then the value of the switch $S_2$ is $down$,
whereas if the chain starts in any other state then the probability distribution on the values of the switch $S_2$ is  1/2 for either $up$ or $down$.

The function $f_2$ is defined as follows: $f_2(i)=1$ for $0\le i\le k_0$ and $i>k_1$ (before the switch and after the reset); $f_2(u_i)=0$ for all $i$, $k_0<i\le k_1$ and $f_2(d_i)=1$ for all $i$, $k_0<i\le k_1$.
The function $f_2$ is undefined on $S_2$ and $R_2$, therefore there is no output on these states (we also assume that passing through $S_2$ and $R_2$ does not increment time). As before, the process $\rho_2$ is defined as $\rho_2=f_2(s_t)$ where
$s_t$ is the state of $m_2$ at time $t$, omitting the states $S_2$ and $R_2$.
The resulting process s illustrated on fig.~\ref{fig:rho2}.

{\em Step 1.2b.}
To show that the process $\rho_2$ is stationary ergodic and a $B$-process, we will 
show that it is equivalent to a function of a stationary ergodic Markov chain, whereas all such process
are known to be $B$ (e.g. \cite{Shields:96}). 
The construction is as follows  (see fig.~\ref{fig:m2}). This chain has states %
 $k_1+1,\dots$ and also 
$u_0,\dots,u_{k_0},u_{k_0+1},\dots,u_{k_1}$ and
$d_0,\dots,d_{k_0},d_{k_0+1},\dots,d_{k_1}$.
\begin{figure*}[!t]\caption{The process $m_2'$. \small{The function $f_2$ is 1 everywhere except the states $u_{k_0+1},\dots,u_{k_1}$, where it is 0.}}\label{fig:m2}
\begin{picture}(200,120)(-0,-50)

\put(20,40){\circle{5}}
\put(18,34){\text{\tiny{$u_0$}}}
\put(22,40){\vector(1,0){20}}
\put(22,41){\text{\tiny{$1-\delta$}}}

\put(46,40){\circle{5}}
\put(45,34){\text{\tiny{$u_1$}}}
\put(49,40){\vector(1,0){20}}
\put(49,41){\text{\tiny{$1-\delta$}}}

\put(73,40){\text{{...}}}
\put(85,40){\vector(1,0){20}}
\put(85,41){\text{\tiny{$1-\delta$}}}
\put(110,40){\circle{5}}
\put(107,34){\text{\tiny{$u_{k_0-1}$}}}

\put(115,40){\vector(1,0){20}}
\put(112,41){\text{\tiny{$1-\delta$}}}
\put(140,40){\circle{5}}
\put(137,34){\text{\tiny{$u_{k_0}$}}}
\put(143,40){\vector(1,0){20}}
\put(142,41){\text{\tiny{$1-\delta$}}}

\put(20,-20){\circle{5}}
\put(18,-26){\text{\tiny{$d_0$}}}
\put(22,-20){\vector(1,0){20}}
\put(22,-19){\text{\tiny{$1-\delta$}}}

\put(46,-20){\circle{5}}
\put(45,-26){\text{\tiny{$d_1$}}}
\put(49,-20){\vector(1,0){20}}
\put(49,-19){\text{\tiny{$1-\delta$}}}

\put(73,-20){\text{{...}}}
\put(85,-20){\vector(1,0){20}}
\put(85,-19){\text{\tiny{$1-\delta$}}}
\put(110,-20){\circle{5}}
\put(107,-26){\text{\tiny{$d_{k_0-1}$}}}

\put(115,-20){\vector(1,0){20}}
\put(112,-19){\text{\tiny{$1-\delta$}}}
\put(140,-20){\circle{5}}
\put(137,-26){\text{\tiny{$d_{k_0}$}}}
\put(143,-20){\vector(1,0){20}}
\put(142,-19){\text{\tiny{$1-\delta$}}}

\put(168,40){\circle{5}}
\put(165,35){\text{\tiny{$u_{k_0+1}$}}}
\put(170,40){\vector(1,0){20}}
\put(170,41){\text{\tiny{$1-\delta$}}}
\put(194,40){\text{{...}}}
\put(206,41){\vector(1,0){20}}
\put(206,41){\text{\tiny{$1-\delta$}}}
\put(231,40){\circle{5}}
\put(228,33){\text{\tiny{$u_{k_1}$}}}

\put(168,-18){\circle{5}}
\put(165,-25){\text{\tiny{$d_{k_0+1}$}}}
\put(170,-18){\vector(1,0){20}}
\put(170,-17){\text{\tiny{$1-\delta$}}}
\put(194,-18){\text{{...}}}
\put(206,-18){\vector(1,0){20}}
\put(206,-17){\text{\tiny{$1-\delta$}}}
\put(231,-18){\circle{5}}
\put(228,-25){\text{\tiny{$d_{k_1}$}}}

\put(237,37){\vector(1,-1){23}}
\put(249,25){\text{\tiny{$1-\delta$}}}
\put(247,-12){\text{\tiny{$1-\delta$}}}
\put(237,-18){\vector(1,1){23}}

\put(265,10){\circle{5}}
\put(262,3){\text{\tiny{$k_1+1$}}}

\put(270,10){\vector(1,0){20}}
\put(270,11){\text{\tiny{$1-\delta$}}}
\put(294,10){\circle{5}}
\put(291,3){\text{\tiny{$k_1+2$}}}
\put(298,10){\vector(1,0){20}}
\put(298,11){\text{\tiny{$1-\delta$}}}
\put(321,10){\text{{...}}}

\put(35,59){\text{\small{$\leftarrow\leftarrow\leftarrow\leftarrow\leftarrow\leftarrow\leftarrow\leftarrow\leftarrow\leftarrow\leftarrow\leftarrow\leftarrow\leftarrow\leftarrow\leftarrow\leftarrow\leftarrow\leftarrow\leftarrow\leftarrow\leftarrow\leftarrow\leftarrow\leftarrow\leftarrow\leftarrow\leftarrow\leftarrow\leftarrow\leftarrow\leftarrow\leftarrow\leftarrow\leftarrow$}}}

\put(46,45){\vector(0,1){16}}
\put(48,49){\text{\tiny{$\delta$}}}
\put(110,45){\vector(0,1){16}}
\put(112,49){\text{\tiny{$\delta$}}}

\put(167,43){\vector(0,1){16}}
\put(170,50){\text{\tiny{$\delta$}}}
\put(231,43){\vector(0,1){16}}
\put(234,50){\text{\tiny{$\delta$}}}

\put(167,-20){\vector(0,-1){27}}
\put(170,-35){\text{\tiny{$\delta$}}}
\put(231,-20){\vector(0,-1){27}}
\put(234,-35){\text{\tiny{$\delta$}}}

\put(109,-26){\vector(0,-1){21}}
\put(110,-35){\text{\tiny{$\delta$}}}
\put(139,-26){\vector(0,-1){21}}
\put(141,-35){\text{\tiny{$\delta$}}}
\put(45,-26){\vector(0,-1){21}}
\put(47,-35){\text{\tiny{$\delta$}}}

\put(38,-49){\text{\small{$\leftarrow\leftarrow\leftarrow\leftarrow\leftarrow\leftarrow\leftarrow\leftarrow\leftarrow\leftarrow\leftarrow\leftarrow\leftarrow\leftarrow\leftarrow\leftarrow\leftarrow\leftarrow\leftarrow\leftarrow\leftarrow\leftarrow\leftarrow\leftarrow\leftarrow\leftarrow\leftarrow\leftarrow\leftarrow\leftarrow\leftarrow\leftarrow\leftarrow\leftarrow\leftarrow$}}}

\put(294,13){\vector(0,1){45}}
\put(295,34){\text{\tiny{$\delta/2$}}}
\put(265,13){\vector(0,1){45}}
\put(266,34){\text{\tiny{$\delta/2$}}}

\put(294,3){\vector(0,-1){48}}
\put(295,-34){\text{\tiny{$\delta/2$}}}
\put(265,3){\vector(0,-1){48}}
\put(266,-34){\text{\tiny{$\delta/2$}}}

\put(35,60){\vector(-1,-1){16}}
\put(38,-45){\vector(-1,1){19}}
\end{picture}
\end{figure*}
 From the states $u_i$, $i=0,\dots,k_1$ the 
chain passes with probability $1-\delta$ to the next state $u_{i+1}$, where the next state for $u_{k_1}$ is $k+1$ and with probability $\delta$ returns to the state $u_0$ (and not to the state 0). Transitions
for the state $d_0,\dots,d_{k_1-1}$ are defined analogously. Thus the states $u_{k_i}$ correspond to the state $up$ of the switch $S_2$ and the states $d_{k_i}$~--- to the state
$down$ of the switch. Transitions for the states $k+1,k+2,\dots$ are defined as follows: with probability $\delta/2$ to the state $u_0$, with probability $\delta/2$ to 
the state $d_0$,  and with probability $1-\delta$ to the next state.
Thus, transitions to 0 from  the states with indices greater than $k_1$ corresponds to the reset $R_2$. Clearly, the chain $m_2'$ as defined possesses a unique stationary distribution $M_2$ over the set of states and $M_2(i)>0$ for every state $i$. Moreover, this distribution is the same as the initial distribution on the states of the chain $m_0$,
except  for the states $u_i$ and $d_i$, for which we have $m_2'(u_i)=m_2'(d_i)=m_0(i)/2=\delta(1-\delta)^i/2$, for $0\le i\le k_0$. 
We take  this distribution as its initial distribution on the states of $m_2'$. The resulting process $m_2'$ is stationary ergodic, and a $B$-process, since
it is   a function of a Markov chain \cite{Shields:96}. 
It is easy to see that if we define the function $f_2$ on the states of $m_2'$ as 1 on all states except $u_{k_0+1},\dots,u_{k_1}$, then the resulting
process is exactly the process $\rho_2$. Therefore, $\rho_2$ is stationary ergodic and a $B$-process.

{\em Step 1.$k$.}
As before, we can continue the construction of 
 the processes $\rho_{u3}$ and $\rho_{d3}$, that start with a segment of $\rho_2$. 
Let $t_2>t_1$ be a time index such that 
$$
 \E_{\rho_2\times\rho_2}  D_{t_2} <\epsilon,
$$ where both samples are generated by $\rho_2$. Let $k_2>k_1$ be  such an index that when starting from the state 0  the process $m_2$ with probability 1 does not 
reach $k_2-1$ by time $t_2$ (equivalently: the process $m_2'$ does not reach $k_2-1$ when starting from either  $u_0$ or $d_0$). 
The processes $\rho_{u3}$ and $\rho_{d3}$ are based on the same process $m_2$ as  $\rho_2$. The functions $f_{u3}$ and $f_{d3}$ coincide with $f_2$  on all states up to the state $k_2$ (including the states $u_i$ and $d_i$, $k_0<i\le k_1$).  After $k_2$ the function $f_{u3}$ outputs 0s while $f_{d3}$ outputs 1s: $f_{u3}(i)=0$, $f_{d3}(i)=1$ for 	$i>k_2$.

Furthermore, we find a time  $t_3>t_2$ by which we have 
$
 \E_{\rho_{u3}\times\rho_{d3}}  D_{t_3} >1-\epsilon,
$ where the samples are generated by  $\rho_{u3}$ and $\rho_{d3}$, which is possible since $D$ is consistent.
Next, find an  index $k_3>k_2$ such that the process $m_2$ does not reach $k_3-1$ with probability $1$ if the processes $\rho_{u3}$ and $\rho_{d3}$ 
are used to produce two independent sequences and both start from the state 0.
  We then construct the process $\rho_4$ based on a (non-Markovian) process $m_4$ by ``gluing''
together $\rho_{u3}$ and $\rho_{d3}$ after the step $k_3$ with a switch $S_4$ and a reset $R_4$ exactly as was done when constructing the process $\rho_2$. 
The process $m_4$ is illustrated on fig.~\ref{fig:m4}a). 
The process $m_4$ can be shown to be equivalent to a Markov chain $m_4'$, 
 which  is constructed analogously to the chain $m_2'$  
(see fig.~\ref{fig:m4}b).
Thus, the process $\rho_4$ is can be shown to be a $B$-process.

\begin{figure*}[h]\caption{a) The processes $m_4$. %
b) The Markov chain $m_4'$}\label{fig:m4}
\begin{picture}(200,120)(45,-50)

\newsavebox{\blo}%
\savebox{\blo}(0,0)[l]{
\put(20,10){\line(1,0){15}}
\put(35,10){\circle*{4}}
\put(37,12){\vector(1,1){2}}

\put(38,13){\line(1,1){10}}
\put(38,7){\line(1,-1){10}}

\put(48,23){\line(1,0){15}}
\put(48,-3){\line(1,0){15}}

\put(63,23){\line(1,-1){11}}
\put(63,-3){\line(1,1){11}}
\put(38,8){\text{\tiny{$S_2$}}}
\put(74,8){\text{\tiny{$R_2$}}}
\put(98,8){\text{\tiny{$S_4$}}}

\put(81,10){\line(1,0){15}}
\put(47,25){\text{\tiny{$f_4=0$}}}
\put(18,12){\text{\tiny{$f_4=1$}}}
\put(47,-8){\text{\tiny{$f_4=1$}}}

}

\newsavebox{\blc}%
\savebox{\blc}(0,0)[l]{
\put(28,23){\line(1,0){15}}
\put(28,-3){\line(1,0){15}}

\put(48,23){\line(1,0){15}}
\put(48,-3){\line(1,0){15}}

\put(63,23){\line(1,-1){11}}
\put(63,-3){\line(1,1){11}}

\put(81,10){\line(1,0){15}}
\put(47,25){\text{\tiny{$f_4=0$}}}
\put(18,25){\text{\tiny{$f_4=1$}}}
\put(18,-8){\text{\tiny{$f_4=1$}}}
\put(47,-8){\text{\tiny{$f_4=1$}}}
\put(78,12){\text{\tiny{$f_4=1$}}}

}

\newsavebox{\bloo}%
\savebox{\bloo}(0,0)[l]{
\put(20,10){\line(1,0){15}}
\put(35,10){\circle*{4}}
\put(37,8){\vector(1,-1){2}}

\put(38,13){\line(1,1){10}}
\put(38,7){\line(1,-1){10}}

\put(48,23){\line(1,0){15}}
\put(48,-3){\line(1,0){15}}

\put(63,23){\line(1,-1){11}}
\put(63,-3){\line(1,1){11}}
\put(74,8){\text{\tiny{$R_4$}}}

\put(81,10){\line(1,0){15}}
\put(47,25){\text{\tiny{$f_4=0$}}}
\put(18,12){\text{\tiny{$f_4=1$}}}
\put(47,-8){\text{\tiny{$f_4=1$}}}
\put(78,12){\text{\tiny{$f_4=1$}}}

}

\put(40,10){\usebox{\blo}}

\put(100,10){\usebox{\bloo}}

\put(196,9){\text{\tiny{$\dots$}}}

\put(234,46){\usebox{\blc}}
\put(234,-36){\usebox{\blc}}%

\put(341,15){\line(1,0){10}}
\put(341,-5){\line(1,0){10}}
\put(351,-5){\line(1,1){10}}
\put(351,15){\line(1,-1){10}}
\put(361,5){\line(1,0){10}}
\put(371,4){\text{\tiny{$\dots$}}}

\put(330,46){\line(1,-3){10}}
\put(330,-36){\line(1,3){10}}

\put(341,18){\text{\tiny{$f_4=0$}}}
\put(341,-10){\text{\tiny{$f_4=1$}}}

\put(40,40){\text{a)}}

\put(230,40){\text{b)}}

\end{picture}
\end{figure*}
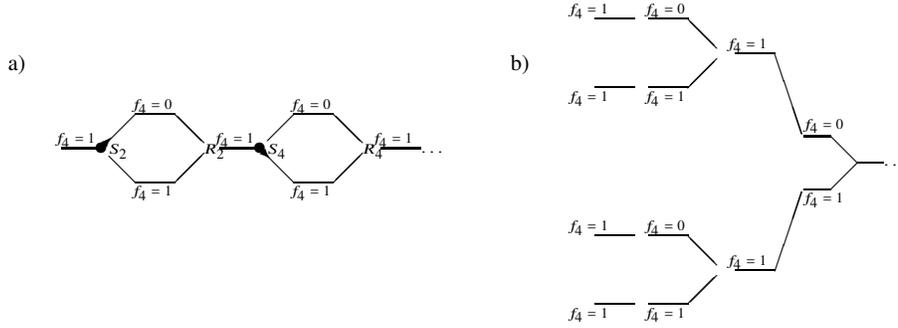

Proceeding this way we can construct the processes $\rho_{2j}$, $\rho_{u2j+1}$ and $\rho_{d2j+1}$, $j\in\N$ choosing the time steps $t_j>t_{j-1}$ so that
the expected output of the test approaches 0 by the time $t_j$ being run on  two samples produced by $\rho_j$ for even $j$, and approaches 1 by the time $t_j$ being run on samples produced by $\rho_{uj}$ and $\rho_{dj}$ for odd $j$:
\begin{equation}\label{eq:teven}
 \E_{\rho_{2j}\times\rho_{2j}}  D_{t_{2j}} <\epsilon
\end{equation}
and 
\begin{equation}\label{eq:todd}
 \E_{\rho_{u2j+1}\times\rho_{d2j+1}}  D_{t_{2j+1}} > (1-\epsilon).
\end{equation}
For each $j$ the number $k_j>k_{j-1}$  is selected in a such a way that the state  $k_j-1$ is not reached (with probability 1) by the  time $t_j$ when starting from the state 0.
Each of the processes $\rho_{2j}$, $\rho_{u2j+1}$ and $\rho_{dj2+1}$, $j\in\N$ can be shown to be stationary ergodic and a $B$-process by demonstrating
equivalence to a Markov chain, analogously to the Step 1.2. The initial state distribution of each of the processes $\rho_t, t\in\N$ is  $M_{t}(k)=\delta(1-\delta)^k$ and 
$M_{t}(u_k)=M_{t}(d_k)= \delta(1-\delta)^k/2$ for those $k\in\N$ for which the corresponding states are defined.

{\em Step 2.} 
Having defined $k_j$, $j\in\N$ we can define the process $\rho$. The construction is given on Step~2a, while on Step~2b we show
that $\rho$ is stationary ergodic and a $B$-process, by showing that it is the limit of the sequence  $\rho_{2j}$, $j\in\N$. 

{\em Step 2a.}
The process $\rho$ can be constructed as follows (see fig.~\ref{fig:rho}). 
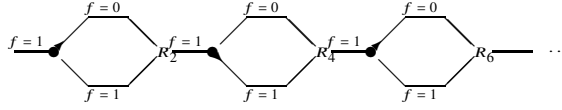
\begin{figure}[h]\caption{The processes $m_\rho$ and $\rho$. The states are on horizontal lines. The function $f$ being applied to the states of $m_\rho$ 
defines the process $\rho$. Its value is  $0$ on the states on the upper lines (states $u_{k_{2j}+1},\dots,u_{k_{2j+1}}$, where $k\in\N$) and 1 on the rest of the states.}\label{fig:rho}
\begin{picture}(200,47)(-70,-7)

\newsavebox{\bloc}%
\savebox{\bloc}(0,0)[l]{
\put(20,10){\line(1,0){15}}
\put(35,10){\circle*{4}}
\put(37,12){\vector(1,1){2}}

\put(38,13){\line(1,1){10}}
\put(38,7){\line(1,-1){10}}

\put(48,23){\line(1,0){15}}
\put(48,-3){\line(1,0){15}}

\put(63,23){\line(1,-1){11}}
\put(63,-3){\line(1,1){11}}
\put(74,8){\text{\tiny{$R_2$}}}

\put(81,10){\line(1,0){15}}
\put(47,25){\text{\tiny{$f=0$}}}
\put(18,12){\text{\tiny{$f=1$}}}
\put(47,-8){\text{\tiny{$f=1$}}}

}
\newsavebox{\blocc}%
\savebox{\blocc}(0,0)[l]{
\put(20,10){\line(1,0){15}}
\put(35,10){\circle*{4}}
\put(37,8){\vector(1,-1){2}}

\put(38,13){\line(1,1){10}}
\put(38,7){\line(1,-1){10}}

\put(48,23){\line(1,0){15}}
\put(48,-3){\line(1,0){15}}

\put(63,23){\line(1,-1){11}}
\put(63,-3){\line(1,1){11}}
\put(74,8){\text{\tiny{$R_4$}}}

\put(81,10){\line(1,0){15}}
\put(47,25){\text{\tiny{$f=0$}}}
\put(18,12){\text{\tiny{$f=1$}}}
\put(47,-8){\text{\tiny{$f=1$}}}

}

\newsavebox{\bloccc}%
\savebox{\bloccc}(0,0)[l]{
\put(20,10){\line(1,0){15}}
\put(35,10){\circle*{4}}
\put(37,12){\vector(1,1){2}}

\put(38,13){\line(1,1){10}}
\put(38,7){\line(1,-1){10}}

\put(48,23){\line(1,0){15}}
\put(48,-3){\line(1,0){15}}

\put(63,23){\line(1,-1){11}}
\put(63,-3){\line(1,1){11}}
\put(74,8){\text{\tiny{$R_6$}}}

\put(81,10){\line(1,0){15}}
\put(47,25){\text{\tiny{$f=0$}}}
\put(18,12){\text{\tiny{$f=1$}}}
\put(47,-8){\text{\tiny{$f=1$}}}

}

\put(-10,10){\usebox{\bloc}}
\put(50,10){\usebox{\blocc}}
\put(110,10){\usebox{\bloccc}}
\put(212,10){\text{\tiny{$\dots$}}}

\end{picture}
\end{figure}
 The construction is based on the (non-Markovian) process $m_\rho$ that has states $0,\dots,k_0$, $k_{2j+1}+1,\dots,k_{2(j+1)}$, $u_{k_{2j}+1},\dots,u_{k_{2j+1}}$ and $d_{k_{2j}+1},\dots,d_{k_{2j+1}}$
for $j\in\N$, along with switch states $S_{2j}$ and reset states $R_{2j}$. Each switch $S_{2j}$ diverts the process to the state $u_{k_{2j}+1}$ if the switch has value $up$ and
to $d_{k_{2j}+1}$ if it has the value $down$. The reset $R_{2j}$ sets $S_{2j}$ to  $up$ with probability 1/2 and to $down$ also with probability 1/2.
 From each state that is neither a reset nor a switch, the process goes to the next state with probability $1-\delta$ and returns
to the state 0 with probability $\delta$ (cf. Step 1$k$). %

The initial distribution $M_\rho$ on the states of $m_\rho$ is defined as follows. For every state $i$ such that $0 \le i\le k_0$ and  $k_{2j+1}<i\le k_{2_j+2}$, $j=0,1,\dots$,
define the initial probability of the state $i$ as $M_{\rho}(i)=\delta(1-\delta)^i$ (the same as in the chain $m_0$), and for the sets $u_j$ and $d_j$ 
(for those $j$ for which these sets are defined)
let $M_{\rho}(u_j)=M_{\rho}(d_j):=\delta(1-\delta)^i/2$ (that is, 1/2 of the probability of the corresponding state of $m_0$). 

The function $f$ is defined as 1 everywhere except 
for the states $u_j$ (for all $j\in\N$ for which $u_j$ is defined)  on which $f$ takes the value 0.  The process $\rho$ is defined at time $t$ as $f(s_t)$,
where $s_t$ is the state of $m_\rho$ at time $t$. 

{\em Step 2b.} To show that $\rho$ is a $B$-process, let us first show that it is stationary. 
Recall the definition~\ref{eq:ddis} of the distributional distance between (arbitrary) process distributions.
  The set of all stochastic processes, equipped with this distance, is complete, and the set of all
stationary processes is its closed subset \cite{Gray:88}. 
Thus, to show that the process $\rho$ is stationary it suffices to show that $\lim_{j\to\infty}d(\rho_{2j},\rho)=0$, since
the processes $\rho_{2j}$, $j\in\N$, are stationary. To do this, it is enough to demonstrate
that 
\begin{equation}\label{eq:lim}
\lim_{j\to\infty}  |\rho((X_{1},\dots, X_{|B|})=B)-\rho_{2j}((X_{1},\dots, X_{|B|})=B)|=0
\end{equation}
for each $B\in A^*$.
  Since the processes $m_\rho$ and $m_{2j}$ coincide on all 
states up to $k_{2j+1}$, we have 
\begin{multline*}
|\rho(X_{n}=a)-\rho_{2j}(X_{n}=a)| = |\rho(X_{1}=a)-\rho_{2j}(X_{1}=a)|\\ \le \sum_{k>k_{2j+1}} M_\rho(k) + \sum_{k>k_{2j+1}} M_{2j}(k) 
\end{multline*} for every $n\in\N$ and $a\in A$. 
Moreover, 
 for any  tuple $B\in A^*$ we obtain
\begin{multline*}
  |\rho((X_{1},\dots, X_{|B|})=B)-\rho_{2j}((X_{1},\dots, X_{|B|})=B)| \\ \le  |B|\left(\sum_{k>k_{2j+1}} M_\rho(k) + \sum_{k>k_{2j+1}} M_{2j}(k)\right)\to0
\end{multline*}
where the convergence follows from $k_{2j}\to\infty$. %
We conclude that~(\ref{eq:lim}) holds true, so that $d(\rho,\rho_{2j})\to 0$ and  $\rho$ is stationary.

To show that $\rho$  is a $B$-process, we will demonstrate that it
is the limit of the sequence $\rho_{2k}$, $k\in\N$ in the $\bar d$ distance (which was only defined for stationary processes). Since the set of all 
$B$-process is a closed subset of all stationary processes, it will follow that $\rho$ itself
is a $B$-process.  (Observe that this way we  get ergodicity of $\rho$ ``for free'', since the set of all ergodic processes is closed in $\bar d$ distance,
and all the processes $\rho_{2j}$ are ergodic.)
In order to show that $\bar d(\rho,\rho_{2k})\to0$ we have
 to find for each $j$ a  processes $\nu_{2j}$ on pairs $(X_1,Y_1),(X_2,Y_2),\dots$, such 
that $X_i$ are distributed according to $\rho$ and $Y_i$ are distributed according to $\rho_{2j}$, 
and such that $\lim_{j\to\infty}\nu_{2j}(X_1\ne Y_1)=0$. Construct such a coupling as follows.
Consider the chains $m_\rho$ and $m_{2j}$, which start in the same state (with initial distribution being $M_\rho$) and always take state transitions together,
where if the process $m_\rho$ is in the  state $u_{t}$ or $d_t$, $t\ge k_{2j+1}$ (that is, one of the states which the chain $m_{2j}$ does not have) 
then the chain $m_{2j}$ is in  the state $t$. The first coordinate of the process $\nu_{2j}$ is obtained by applying the function $f$ to the
process $m_\rho$ and the second by applying $f_{2j}$ to the chain $m_{2j}$. Clearly, the distribution 
of the first coordinate is $\rho$ and the distribution of the second is $\rho_{2j}$. Since the chains start in the same 
state and always take state transitions together, and since the chains $m_\rho$ and $m_{2j}$ coincide up to the 
state $k_{2j+1}$  we have $\nu_{2j}(X_1\ne Y_1)\le \sum_{k>k_{2j+1}} M_\rho(k)\to0$. Thus, $\bar d(\rho, \rho_{2j})\to0$, so that  $\rho$ is a $B$-process.

{\em Step 3.}
Finally, it remains to show that the expected output of the test $D$ diverges if the test is run on two independent samples produced by $\rho$.

Recall that for all the chains $m_{2j}$, $m_{u2j+1}$ and $m_{d2j+1}$  as well as for the chain  $m_\rho$, the initial
probability of the state 0 is  $\delta$. By construction, if the process $m_\rho$ starts at the state 0 then up to the time step $k_{2j}$ it behaves exactly 
as $\rho_{2j}$ that has started at the state 0.  In symbols, we have 
\begin{equation}\label{eq:even1}
 E_{\rho\times\rho} ( D_{t_{2j}}| s_0^x=0, s_0^y=0)=E_{\rho_{2j}\times\rho_{2j}} ( D_{t_{2j}}| s_0^x=0, s_0^y=0)
\end{equation}
for $j\in\N$, where $s_0^x$ and $s_0^y$ denote the initial states of the processes generating the samples $X$ and $Y$ correspondingly.

We will use the following simple decomposition 
\begin{equation}\label{eq:dec}
 \E( D_{t_j}) = \delta^2\E( D_{t_j}|s^x_0=0, s^y_0=0) %
+ (1-\delta^2)\E( D_{t_j}|s^x_0\ne0 \text{ or } s^y_0\ne0),
\end{equation}
From this, (\ref{eq:even1}) and~(\ref{eq:teven}) we have 
\begin{multline}\label{eq:even2}
 \E_{\rho\times\rho} (  D_{t_{2j}}) \le \delta^2 \E_{\rho\times\rho} ( D_{t_{2j}} | s^x_0=0, s^y_0=0) +(1-\delta^2) \\ 
= \delta^2 \E_{\rho_{2j}\times\rho_{2j}} ( D_{t_{2j}} | s^x_0=0, s^y_0=0) +(1-\delta^2) \\
\le \E_{\rho_{2j}\times\rho_{2j}} +(1-\delta^2)  < \epsilon +(1-\delta^2).
\end{multline}

 For odd indices, if the process $\rho$ starts at the state 0 then (from the definition of $t_{2j+1}$) by the  time $t_{2j+1}$ it does not 
reach the reset $R_{2j}$;  therefore, in this case  the value of the switch $S_{2j}$ does not change up to the time $t_{2j+1}$.
 Since the definition of $m_\rho$ is symmetric with respect to the values $up$ and $down$ of each switch, the probability that two samples $x_1,\dots,x_{t_{2j+1}}$ and $y_1,\dots,y_{t_{2j+1}}$ generated  independently
by (two runs of) the process $\rho$ produced different values of the switch $S_{2j}$ when passing through it for  the first time is 1/2. In other words, with probability 1/2 two samples generated 
by  $\rho$ starting at the state 0 will look by the time $t_{2j+1}$ as two samples generated by $\rho_{u2j+1}$ and $\rho_{d2j+1}$ that has started at state 0. Thus
\begin{equation}\label{eq:odd1}
 E_{\rho\times\rho} ( D_{t_{2j+1}}| s_0^x=0, s_0^y=0)\\ \ge \frac{1}{2}E_{\rho_{u2j+1}\times\rho_{d2j+1}} ( D_{t_{2j+1}}| s_0^x=0, s_0^y=0)
\end{equation}
for $j\in\N$. Using this, (\ref{eq:dec}),  and~(\ref{eq:todd}) we obtain
\begin{multline}\label{eq:odd2}
 \E_{\rho\times\rho} (  D_{t_{2j+1}}) \ge  \delta^2 \E_{\rho\times\rho} ( D_{t_{2j+1}} | s^x_0=0, s^y_0=0)\\
\ge \frac{1}{2} \delta^2 \E_{\rho_{2j+1}\times\rho_{2j+1}} ( D_{t_{2j+1}} | s^x_0=0, s^y_0=0)\\
\ge  \frac{1}{2} \left( \E_{\rho_{2j+1}\times\rho_{2j+1}} (  D_{t_{2j+1}}) - (1-\delta^2)\right)> \frac{1}{2} (\delta^2-\epsilon).
\end{multline}

Taking $\delta$ large and $\epsilon$ small (e.g. $\delta=0.9$ and $\epsilon=0.1$), we can make the bound~(\ref{eq:even2}) close to 0 and the bound~(\ref{eq:odd2}) close to 1/2, and the  expected output of the test will cross these values infinitely often. %
Therefore, we have shown that the expected output of the test $D$ diverges on two independent runs of the process $\rho$, contradicting the consistency of $D$.
This contradiction concludes the proof.	
\end{proof}

\chapter{Clustering and change-point problems}\label{ch:clchp}
In the previous chapter we have considered some basic questions of statistical inference. It was  established that, when speaking about stationary ergodic processes, one can answer questions like ``which distribution is closer to which'' but not ``are these distributions the same,'' based on samples. In this chapter we shall see how these questions come into play when considering more complex problems, namely, clustering and change-point problems.

 Clustering is grouping together samples generated by the same distributions, while change-point problems are concerned with delimiting parts of a sample that are generated by a single process distribution.  At first glance, it seems that this kind of questions should be impossible to solve, since we cannot even answer the simple ``same-different'' question about distributions. However, we shall see that often, and mainly in the case when the total number of different distributions is known, these questions can be reduced to answering the ``which one is closer'' question, and thus admit a solution.

All the algorithms that are  mentioned in this chapter  do not present any significant computational  challenges, perhaps except for calculating the distributional distance (see Section~\ref{s:dd} above about that). Therefore, we omit algorithmic and implementational details; the interested reader can find these in the corresponding papers that also present experimental evaluations of the algorithms: \cite{Khaleghi:15clust} for clustering and \cite{Khaleghi:12mchp,Khaleghi:14,Khaleghi:15chp} for change-point problems. The material in this chapter is mainly after \cite{Ryabko:10clust,Khaleghi:15clust} for clustering and  \cite{Ryabko:103s} for change-point problems, with some results of \cite{Khaleghi:12mchp,Khaleghi:14,Khaleghi:15chp,Ryabko:17clin} given without proofs.

\section{Time-series clustering}
Given a finite set of objects, the  problem  of ``clustering'' similar objects together, in the absence
of any examples of ``good'' clusterings,  is notoriously hard to formalize.
Most of the work on clustering is concerned with particular parametric data-generating models, or with analysing  particular algorithms,
a given  similarity measure, and (very often) a given number of clusters.
It is clear that, as in almost  learning problems, in clustering finding the right similarity measure is an integral 
part of the problem. However, even if one assumes  the similarity 
measure known, it is hard to define what a good clustering is \cite{Kleinberg:02, Zadeh:09}. 
What is more, even if one assumes the similarity measure to be simply the Euclidean distance (on the plane), 
and the number of clusters  $k$ known, then clustering may still appear intractable for computational reasons \cite{Mahajan:09}.

The problem acquires a different angle when one wishes to cluster  processes. That is, 
each data point is itself a time-series sample. 
This version of the problem has numerous applications, such as clustering biological data, financial
observations, or behavioural patterns, and as such it has gained a tremendous attention in the literature.

A crucial  observation to make in the case of clustering processes, is that   one can 
 benefit from the notion of ergodicity to define what appears to be a very natural notion of consistency.
Ergodicity means that the distribution of a sample can be determined in asymptotic, or approximated arbitrary well if the sample size is long enough.
This makes the the following goal achievable. 
\begin{svgraybox}
  Given  $N$ samples $\x_1=(x^1_1,\dots,x^1_{n_1}),\dots, \x_N=(x^N_1,\dots,x^N_{n_N})$, each drawn by one out of $\kappa$ unknown process distributions, group together those and only those samples that were generated by the same distribution.
\end{svgraybox}

The samples are  $\x_j$ are  not assumed to be drawn 
independently; rather, it is assumed that the joint distribution of the samples is stationary ergodic. The target clustering is as follows:
those and only those samples are put into the same cluster  that were generated by the same distribution. 
A clustering algorithm is called asymptotically consistent if it  outputs only the correct answer with probability~1 from some $n$ on, where $n$ is the length of the shortest sample, $n:=\min\{n_1,\dots,n_N\}.$ 
Note the particular regime of asymptotic: not with respect
to the number of samples $N$, but with respect to the length of the samples $n_1,\dots,n_N$.

Clearly, the problem of clustering in this   formulation %
is a direct generalisation of the three-sample problem of Section~\ref{s:three}. Indeed, the latter problem can be seen as clustering $N=3$ samples into $\kappa=2$ clusters, where $\kappa$ is given. At the same time, the discrimination problem of Section~\ref{s:hom} can be seen as clustering $N=2$ samples into either $\kappa=1$ or $\kappa=2$ clusters, with $\kappa$ unknown. 

Anticipating, from this  we can already see when it is possible and when it is not possible to have a consistent algorithm for clustering stationary ergodic time series.
\begin{svgraybox}
 There exists a consistent algorithm for clustering stationary ergodic time series if and only if the number of clusters $\kappa$ is known. 
\end{svgraybox}

We proceed below with a more formal problem formulation and the exposition of the algorithm.
\subsection{Problem formulation}
The clustering problem can be defined as follows.  $N$ samples $\x_1,\dots,\x_N$ are given, where 
each sample $\x_i$ is of length $n_i$: $\x_i=X^i_{1..n_i}$.
The samples are generated by a distribution $P$ over $(A^N)^\infty$, that is, a distribution that generates an infinite sequence of $N$-tuples.
\begin{equation}\label{eq:mat1}
\left [\begin{array}{ccccc} X_1^1 & \dots  & X_{n_1}^1 & \dots \\ 
 & \vdots &  & \\
X_1^N & \dots  & X_{n_N}^N & \dots 
  \end{array} \right ] %
\end{equation} 
The marginal distribution of each sequence $X^i_{1..n_i,..}$ is  one out of $\kappa$ different (and  unknown) stationary ergodic distributions $\rho_1,\dots,\rho_\kappa\in\mathcal E$. Note that we allow the samples $\x_1,\dots,\x_N$ to be dependent; the only requirement is on the marginal distributions (they should be stationary ergodic). 
Thus, there is a partitioning $\G=\{\G_1,\dots,\G_\kappa\}$ of  the set $\{1..N\}$ into $\kappa$ {\em disjoint} subsets $\G_j, j=1..\kappa$ 
$$
\{1..N\}=\cup_{j=1}^\kappa \G_j,
$$
such that $\x_j$, $1\le j\le N$ is generated by $\rho_j$ if and only if $j\in \G_j$.
The partitioning $\G$ is called the {\em target (or ground-truth) clustering}  and the sets $\G_i, 1\le i\le \kappa$, are called the
{\em target clusters}. Given samples $\x_1,\dots,\x_N$ and a target clustering $\G$,  let  $\G(\x)$
denote the cluster that contains~$\x$.

A {\em clustering function} $F$ takes a finite number of samples  %
$\x_1,\dots,\x_N$ and a parameter $k$ (the target number of 
clusters) and outputs a partition $F(\x_1,\dots,\x_N,(k))=\{T_1,\dots,T_k\}$ of the set $\{1..N\}$.
\begin{definition}[asymptotic consistency] Let a finite number $N$ of samples be given, and let the target 
clustering partition be $\G$. Define  $n=\min\{n_1,\dots,n_N\}$.
 A clustering function  $F$ is strongly asymptotically consistent if  
$$
  F(\x_1,\dots,\x_N,\kappa)=\G
$$ from some $n$ on with probability~1.
 A clustering function is weakly asymptotically consistent if  
$$
  P(F(\x_1,\dots,\x_N,\kappa)=\G)\to1.
$$%
\end{definition}

Note that the consistency is asymptotic with respect to {\em the minimal length of the sample}, and not with respect to the {\em number of samples}.

\subsection{A clustering algorithm and its consistency}\label{s:clusta}
Here we present an algorithm that is shown  to be asymptotically consistent in the general framework introduced.
What makes this simple algorithm interesting is that it requires only $\kappa N$ distance calculations (where $\kappa$ is the number of clusters),  that is, much less than is needed to calculate the distance between each two sequences. 

In short, Algorithm~\ref{a0} initialises the clusters using farthest-point initialisation, 
and then assigns each remaining point to  the nearest cluster.   
More precisely, the sample $\x_1$ is assigned as the first cluster centre. 
Then a sample is found that is farthest away from $\x_1$ in the 
empirical distributional distance $\hat d$ 
and is assigned as the second cluster centre.  For each $k=2..\kappa$ 
the $k^{\text{th}}$ cluster centre is sought as the sequence 
with the largest minimum distance from the already assigned cluster centres 
for $1..k-1$. 
 By the last iteration we have $\kappa$ cluster centres.   (This  initialisation procedure was proposed in \cite{Katsavounidis:94} in the context of  $k$-means clustering.)
Next, the remaining samples are each assigned 
to the closest cluster.
\begin{algorithm}[!h]
\caption{Offline clustering  }
\label{a0}
\begin{algorithmic}[1]
\State 
\bf {INPUT}: sequences $S:=\{\x_1, \cdots, \x_N\}$, Number $\kappa$ of clusters 
\State{\bf \em Initialize $\kappa$-farthest points as cluster-centres:}
\State{$c_1 \gets 1$}
\State{$C_1\gets\{c_1\}$}
\For{$k=2..\kappa$}
\State $c_k\gets \argmaxdisp {i=1..N} \displaystyle \min_{j=1..k-1} \hat{d}(\x_i,\x_{c_{j}})$, {\em where ties are broken arbitrarily}
\State $C_k \gets \{c_k\}$ 
\EndFor
\State\bf{\em Assign the remaining points to closest centres:}
\For {$i=1 .. N$}
\State $k \gets \argmin_{j\in \bigcup_{k=1}^\kappa C_{k}}\hat{d}(\x_i,\x_{j})$
\State $C_k \gets C_k \cup \{i\}$
\EndFor
\State \bf {OUTPUT}: clusters $C_1, C_2, \cdots, C_{\kappa}$
\end{algorithmic}
\end{algorithm}
\begin{theorem}\label{th:cons}
 Algorithm~\ref{a0} is strongly asymptotically consistent %
provided that the correct number $\kappa$ of clusters is known, and the marginal distribution of each sequence $\x_i, i=1..N$ is stationary ergodic. 
\end{theorem}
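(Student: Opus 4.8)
The plan is to derive a single uniform separation estimate from Lemma~\ref{th:dd} and then to check, purely combinatorially, that once the empirical distances are close enough to the true ones both phases of the algorithm behave as intended. First I would set $\mingap:=\min_{1\le p<q\le\kappa} d(\rho_p,\rho_q)$, which is strictly positive because the $\rho_p$ are pairwise distinct and $d$ is a metric, and fix any $\epsilon$ with $0<\epsilon<\mingap/2$. Each of the finitely many pairwise estimates $\hat d(\x_i,\x_j)$ converges almost surely to $d(\rho_{\x_i},\rho_{\x_j})$ as the sample lengths grow, by Lemma~\ref{th:dd}(i); note that this lemma applies to every pair even though the samples may be dependent, since its proof only invokes the ergodic theorem for the frequencies of each individual sequence, and each marginal is stationary ergodic by assumption. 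Intersecting these $\binom{N}{2}$ probability-one events yields, with probability~1, an index $n_0$ such that for every minimal length $n=\min_i n_i>n_0$ one has $|\hat d(\x_i,\x_j)-d(\rho_{\x_i},\rho_{\x_j})|<\epsilon$ for all $i,j$ simultaneously. On this event and for such $n$ two clean inequalities hold: samples sharing a cluster satisfy $\hat d(\x_i,\x_j)<\epsilon$, while samples in different clusters satisfy $\hat d(\x_i,\x_j)>\mingap-\epsilon>\epsilon$. All remaining reasoning is deterministic and takes place on this event.

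The main step is to show that farthest-point initialisation picks exactly one sample from each target cluster, which I would prove by induction on $k$: the centres $c_1,\dots,c_k$ always lie in $k$ pairwise distinct clusters. The base case is immediate. For the inductive step, assume $c_1,\dots,c_{k-1}$ represent $k-1$ distinct clusters; then for any index $i$ whose cluster is already represented, the distance to that cluster's centre is below $\epsilon$, so $\min_{j<k}\hat d(\x_i,\x_{c_j})<\epsilon$, whereas for any index $i$ in a not-yet-represented cluster every selected centre lies elsewhere and $\min_{j<k}\hat d(\x_i,\x_{c_j})>\mingap-\epsilon>\epsilon$. Because $k-1<\kappa$ and each cluster is nonempty, a not-yet-represented cluster exists, so the $\argmax$ in the initialisation loop is necessarily attained at a sample from a new cluster. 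This is where the farthest-point rule does the real work and where the argument needs the most care: the between-cluster separation $\mingap-\epsilon$ must dominate the within-cluster slack $\epsilon$ uniformly, which is precisely what the choice $\epsilon<\mingap/2$ guarantees.

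It then remains to treat the assignment loop. After initialisation there is exactly one centre in each of the $\kappa$ clusters, so for an arbitrary sample $\x_i$ the centre in its own cluster is at empirical distance below $\epsilon$ while every other centre is at distance above $\mingap-\epsilon>\epsilon$; hence the $\argmin$ assigns $\x_i$ to the centre of its own cluster. The output partition therefore coincides with the target clustering $\G$, and since this holds for every $n>n_0$ on a probability-one event, Algorithm~\ref{a0} is strongly asymptotically consistent. The only probabilistic ingredient is the simultaneous convergence of all $\binom{N}{2}$ pairwise distances, which is unproblematic because $N$ is fixed; everything else is the deterministic separation bookkeeping described above.
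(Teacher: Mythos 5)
Your proof is correct and follows essentially the same route as the paper's: establish a uniform separation (within-cluster empirical distances small, between-cluster distances bounded below by roughly the minimal true distance), then observe deterministically that farthest-point initialisation must pick one centre per cluster and that the assignment loop sends each sample to its own cluster's centre. The only cosmetic difference is that you control the pairwise distances $\hat d(\x_i,\x_j)$ directly via Lemma~\ref{th:dd}(i), whereas the paper bounds $\hat d(\x_i,\rho_k)$ via part (ii) and then applies the triangle inequality; both yield the same separation with slightly different constants.
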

To main idea of the proof is as follows.  Lemma~\ref{th:dd} implies that, 
if the samples in $S$ are long enough, the samples that are generated by the same process distribution 
are closer to each other than to the rest of the samples. 
Therefore, the samples chosen as cluster centres are each generated by a different process distribution.
The theorem then follows from the fact that the algorithm assigns the rest of the samples to the closest clusters. 
\begin{proof}
Let $n$ denote the shortest sample length in $S$:
$$
n_{\min}:=\min_{i \in 1..N} n_i.
$$
Denote by
$\delta$ the minimum nonzero distance between the process distributions:
$$
\delta:=\min_{k \neq k' \in 1..\kappa}\hat{d}(\rho_k,\rho_{k'}).
$$
Fix $\epsilon \in (0,\delta/4)$. 
Since there are a finite number $N$ of samples, 
by Lemma~\ref{th:dd} for all large enough $n_{\min}$  we have
\begin{align}\label{th:cons:consistency_d}
\sup_{\substack{k \in 1..\kappa\\i \in \G_k \cap \{1..N\}}}\hat{d}(\x_i,\rho_k) \leq \epsilon.
\end{align}
where $\G_k,~k=1..\kappa$ denote the ground-truth partitions. 
By \eqref{th:cons:consistency_d} and applying the triangle inequality   
we obtain
\begin{align}\label{th:cons:ineq0}
\sup_{\substack{k \in 1..\kappa \\i,j \in \G_k\cap \{1..N\}}}\hat{d}(\x_i,\x_j) \leq 2\epsilon. 
\end{align}
Thus, for all large enough $n_{\min}$ we have
\begin{align}\label{th:cons:ineq1}
\inf_{\substack{i \in \G_k \cap \{1..N\}\\j \in \G_{k'}\cap \{1..N\}\\k \neq k' \in 1..\kappa}}\hat{d}(\x_i,\x_j) 
&\geq \inf_{\substack{i \in \G_k \cap \{1..N\}\\j \in \G_{k'}\cap \{1..N\}\\k \neq k' \in 1..\kappa}} d(\rho_k,\rho_{k'})-\hat{d}(\x_i,\rho_k)-\hat{d}(\x_j,\rho_{k'})\notag \\
&\geq \delta- 2\epsilon
\end{align}
where the first inequality follows from the triangle inequality, 
and the second inequality follows from \eqref{th:cons:consistency_d} and the definition of $\delta$. 
In words,  \eqref{th:cons:ineq0} and \eqref{th:cons:ineq1}  
mean that the samples in $\S$ 
that are generated by the same process distribution are closer to each other than to the rest of the samples. 
Finally,  for all  $n_{\min}$ large enough to have \eqref{th:cons:ineq0} and \eqref{th:cons:ineq1} we obtain 
$$\displaystyle \max_{i=1..N}\min_{k=1..\kappa-1} \hat{d}(\x_i,\x_{c_k})\geq \delta-2\epsilon>\delta/2$$
where, as specified by Algorithm~\ref{a0}, $c_1:=1$ and $c_k:= \argmaxdisp {i=1..N} \displaystyle \min_{j=1..k-1} \hat{d}(\x_i,\x_{c_{j}}),~k=2..\kappa$. 
Hence, the indices $c_1,\dots,c_\kappa$ will be chosen to index 
sequences generated by different process distributions. To derive the consistency statement, it remains
to note that, by \eqref{th:cons:ineq0} and \eqref{th:cons:ineq1}, each remaining sequence will be assigned to the cluster centre corresponding
to the sequence generated by the same distribution.
\end{proof}

\subsection{Extensions: unknown $k$, online clustering and clustering with respect to independence}
In this section we briefly consider several extensions and modifications of the process clustering problem. The problems are only outlined, and the details are left out; the interested reader is referred to the corresponding papers that treat each of these problems in detail.
\subsubsection{Unknown number of clusters}\label{s:clunk}
As mentioned in the beginning of this section, if the number of clusters $\kappa$ is unknown, then the problem provably has no solution. Thus, if we really want to have a consistent algorithm that does not require $\kappa$, then something has to give in. Sacrificing the generality is one way of doing it. Clearly, if we assume that the speed of convergence of frequencies has a known upper-bound, as is the case when time-series are i.i.d.\ or mixing (with a bound on the mixing coefficient) then everything becomes possible. The resulting time-series clustering problem is still interesting, but clearly falls out of the scope of this volume. A simple example of an algorithm that is consistent in this setting can be found in \cite{Ryabko:10clust,Khaleghi:15clust}. It is worth noting that it remains open to establish tight upper- and lower-bounds on the error probability  of clustering algorithms even for the case of i.i.d.\ time series.
\subsubsection{Online clustering}\label{s:clon}
An interesting and practical modification of the clustering problem consists in taking it ``online.'' On each time step, new samples are revealed, which can be either a continuation of some of the time-series available on the previous steps, or form a new time series. The asymptotic setting commands that the length of each time series should grow to infinity, as should the number of time series, though they may do so in an arbitrary manner. As before, the only requirement we would like to make is that the marginal distribution of each of the processes is stationary and ergodic. There are only $\kappa$ different marginal distributions, the number $\kappa$ of these distributions is known but this is all the information we get. 

Let us describe the problem a little more formally.
Consider the two-way infinite matrix ${\bf X}$ of $A$-valued random variables
\begin{equation}\label{eq:mat}
{\bf X} := \left [\begin{array}{ccccc} X_1^1 & X_2^1 & X_3^1 & \dots 
\\ X_1^2 & X_2^2 & \dots & \dots \\ \vdots & \vdots & \ddots & \vdots\\
  \end{array} \right ] %
\end{equation} 
generated by some  probability distribution $P$ on $((A^\infty)^\infty,\mathcal B_2)$, where $\mathcal B_2$ is the corresponding Borel sigma-algebra. 
The matrix $\X$ can be seen as an infinite sequence of infinite sequences; since $(A,\mathcal B_1)$ is a standard probability space, so is $(A^\infty,\mathcal B_\infty)$ and thus $((A^\infty)^\infty,\mathcal B_2)$ is well-defined (e.g., \cite{Gray:88}).

Assume that the marginal distribution of $P$ on each row of ${\bf X}$ is one of $\kappa$  unknown stationary 
ergodic process distributions $\rho_1, \rho_2, \dots, \rho_\kappa$. 
Thus, the matrix $\bf{X}$ corresponds to infinitely many  one-way infinite sequences, each of which is generated
by a stationary ergodic distribution. 
Aside from this assumption, we do not make any further assumptions
on the distribution 
$P$ that generates~${\bf X}$.
This means that the rows of ${\bf X}$ (corresponding to different time-series samples)  are allowed to be dependent, 
and the dependence can be arbitrary; 
one can even think of the dependence between samples as {\em adversarial}.
For notational convenience we assume that the distributions 
$\rho_k, k= 1..\kappa$ are ordered based on the order of appearance of their first rows (samples) in ${\bf X}$.

As in the offline setting, the ground-truth partitioning of $\bf{X}$ is defined by grouping the rows that have the same marginal distribution.
Let $$\G=\{\G_1,\dots,\G_\kappa\}$$  be a partitioning
of $\N$ into $\kappa$  disjoint  
subsets $\G_k,~k=1..\kappa$,
such that the marginal distribution of $\x_i$, $i \in \N$ is  $\rho_k$ for some $k \in 1..\kappa$ if and only if $i\in \G_k$.
The partitioning $\G$ is called  the ground-truth clustering. 

Introduce also the notation $\G|_N$ for the restriction of $\G$ to the first $N$ sequences: $$\G|_N:=\{\G_k\cap\{1..N\}:k=1..\kappa\}.$$

At every time step $t \in \N$, a part $S(t)$ of ${\bf X}$ 
is observed corresponding to the first $N(t) \in \N$ rows of ${\bf X}$, 
each of length $n_i(t),i\in 1..N(t) $, 
i.e. $$S(t)= \{ \x_1^t, \cdots \x_{N(t)}^t \}~\text{where}~\x_i^t := X_{1..n_i(t)}^i.$$

We assume that the number of samples, as well as 
the individual sample-lengths grow with time.  
That is, the length $n_i(t)$ of each sequence  $\x_i$ is  nondecreasing  and grows to infinity (as a function of  time $t$).
The number of sequences $N(t)$ also grows to infinity. 
Aside from these assumptions, the functions $N(t)$ and $n_i(t)$ are completely arbitrary.

An algorithm is called asymptotically consistent in the online setting, if, for every $N$ w.p.1 from some point on the clustering $C$ output by the algorithm coincides with the ground-truth on the first $N$ samples, i.e.\ $C|_{N}=\G|_N$.

It turns out that this setting admits a consistent clustering algorithm.
\begin{theorem}
 There exists an algorithm that is asymptotically consistent in the online setting, provided that the marginal distribution of  each sequence $\x_i, i\in\N$ is stationary ergodic. 
\end{theorem}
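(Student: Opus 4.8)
The plan is to adapt the offline farthest-point procedure (Algorithm~\ref{a0}) to the online setting by introducing a fixed summable weighting of the samples: assign to the $j$-th sample (in order of appearance) a weight $w_j$ taken from a positive sequence with $\sum_{j} w_j <\infty$, for instance $w_j=1/(j(j+1))$. The cluster centres are then selected by a \emph{weighted} farthest-point rule, $c_1:=1$ and $c_k:=\argmaxdisp{i=1..N(t)} w_i \min_{j=1..k-1}\hat d(\x_i^t,\x_{c_j}^t)$ for $k=2..\kappa$, after which every sample is assigned to the nearest centre exactly as before. The weighting is the essential new ingredient: at any time $t$ there are infinitely many samples, including arbitrarily short freshly-arrived ones whose empirical distances have not yet stabilised, and the weights guarantee that this unbounded pool of unconverged samples carries a vanishing total influence.

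Fix $N$; I must show that, with probability one, $C|_N=\G|_N$ from some time on. First I would choose a cutoff $N'\ge N$. Because the ground-truth assignment of distributions to rows is deterministic, each of the $\kappa$ distributions has a first appearance at a fixed finite index; let $I$ be the largest of these, write $\delta:=\min_{k\ne k'} d(\rho_k,\rho_{k'})$, and take $N'$ large enough that $N'\ge\max(N,I)$ and $w_{N'+1}<w_I\delta/2$. Next, by Lemma~\ref{th:dd}, for each fixed $i\le N'$ we have $\hat d(\x_i^t,\rho_{k(i)})\to0$ a.s.\ as $t\to\infty$ (since $n_i(t)\to\infty$), where $k(i)$ is the cluster index of sample $i$; as there are finitely many such $i$, with probability one there is a time $T$ after which $\hat d(\x_i^t,\rho_{k(i)})\le\epsilon$ for all $i\le N'$ simultaneously, for a small $\epsilon<\delta/4$ fixed in advance. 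By the triangle inequality this makes any two prefix samples from the same cluster within $2\epsilon$ and any two from different clusters at least $\delta-2\epsilon$ apart.

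It then remains to verify that after time $T$ the weighted initialisation selects exactly one centre per true cluster. Since the distributional distance is bounded by $1$, any tail sample ($j>N'$) has weighted score at most $w_j\le w_{N'+1}$, whereas every not-yet-represented cluster possesses a prefix representative of index $\le I$ and hence weighted score at least $w_I(\delta-2\epsilon)$; with $\epsilon<\delta/4$ and $N'$ chosen as above, the latter strictly exceeds both $w_{N'+1}$ and the score $\le 2\epsilon w_1$ of any prefix sample from an already-represented cluster. Hence each successive $c_k$ is drawn from a fresh cluster, giving one centre per distribution, after which the nearest-centre rule correctly places every converged prefix sample---in particular the first $N$---into the cluster of the centre generated by the same distribution. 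Thus $C|_N=\G|_N$, and since $N$ was arbitrary the algorithm is consistent.

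The main obstacle---and the reason the offline argument of Theorem~\ref{th:cons} does not transfer verbatim---is precisely the presence, at every time step, of infinitely many samples of arbitrarily small length: one cannot simply drive ``the shortest sample length to infinity,'' because new short samples keep arriving. The weighting resolves this, but the delicate point is the quantitative balance in the centre-selection step: one must guarantee that the weighted score $w_I(\delta-2\epsilon)$ of a genuine fixed-index representative exceeds both the largest weighted score $w_{N'+1}$ attainable by any tail sample and the residual scores of prefix samples already placed in clusters. This is what forces the joint choice of a small $\epsilon$ and a large $N'$ (large enough that $w_{N'+1}<w_I\delta/2$), and it is here that the summability of $(w_j)$ together with the finiteness of the first-appearance indices of the $\kappa$ distributions is used.
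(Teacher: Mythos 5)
Your argument is essentially correct, but it takes a genuinely different route from the one the paper adopts (the paper only sketches its solution and defers the details to \cite{Khaleghi:15clust}). The paper hedges over \emph{batches}: for each $j=\kappa..N(t)$ it runs the offline procedure on the first $j$ sequences, scores the resulting candidate centres by the minimum pairwise empirical distance among them, and forms a weighted combination of these clusterings; the data-dependent score automatically suppresses batches that do not yet contain representatives of all $\kappa$ distributions (two of their centres then come from the same cluster, so the score tends to $0$), while the fixed summable weight suppresses large-$j$ batches contaminated by short, unconverged samples. You instead place a fixed summable weight directly on each \emph{sample} inside a single farthest-point pass. Both constructions rest on the same two pillars --- Lemma~\ref{th:dd} applied to each of the finitely many rows with index below a cutoff $N'$, and the vanishing of the weights beyond $N'$ to neutralise the unbounded pool of fresh samples --- and yours is arguably more elementary, at the price of giving up the data-adaptive component of the paper's weighting. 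Structurally your proof goes through: after the a.s.-finite time $T$ each successive centre is forced to come from a not-yet-represented cluster among the prefix samples, and the nearest-centre assignment is then correct on the first $N$ rows, which is all the online consistency definition requires.

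Two quantitative points need tightening, though neither threatens the argument. First, $\epsilon<\delta/4$ alone does not give $w_I(\delta-2\epsilon)>2\epsilon w_1$ unless $I=1$; you need $\epsilon$ small relative to $w_I\delta$ as well (say $\epsilon<w_I\delta/4$), which is harmless since $I$ is a fixed finite index, but it should be stated as part of the joint choice of $\epsilon$ and $N'$ that you already allude to. Second, the empirical distributional distance in the form~\eqref{eq:ddisd} is bounded by $2$ rather than $1$, so the tail-sample score bound and the condition on $w_{N'+1}$ should be adjusted by that constant.
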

The proof of this theorem, along with the corresponding  algorithm, can be found in~\cite{Khaleghi:15clust}.

It is worth noting that the main challenge in constructing such an algorithm is the fact that, on every time step $t$, we do not know whether all of the $\kappa$ different distributions are already present, or the $N(t)$ are generated by fewer than $\kappa$ different distributions. The solution is based on a weighted average of clusterings, each constructed based on the first $N$ rows, with carefully selected weights.
\subsubsection{Clustering with respect to independence}
The clustering problem considered in the previous sections may be seen as clustering {\em with respect to distribution}:  putting together those and only those samples that are generated by the same distribution. Another way to look at clustering time series is grouping them with respect to (in)dependence.  Thus, the problem is as follows.
\begin{svgraybox}
  Given a set $S=(\x_1,\dots,\x_N)$ of samples,  it is required to find the finest partitioning $\{U_1,\dots,U_k\}$ of  $S$ into clusters  such that the clusters $U_1,\dots,U_k$ are mutually independent. 
\end{svgraybox}

The formal model is the same as in clustering with respect to distribution: the probability distribution is that on the space of infinite sequence of $N$-tuples~\eqref{eq:mat1}. However, in this setting we  require the joint distribution to be stationary ergodic, whereas before we only had to put this constraint on the martinal distribution of the samples.

What makes this problem very different from the previous one, and, in fact, from  the rest of the problems considered in the clustering literature, is that,  since mutual independence is the target, pairwise similarity measurements are of no use. Therefore,  traditional clustering algorithms are inapplicable, since they are based on calculating some distance between pairs of objects (in the case of the previous sections,  time-series samples)  $\x_i,\x_j$.  

Thus, to solve this problem we have to go back to the first principles and first consider what should we do if the joint distribution of all the samples is known. After that, it is instructive to consider i.i.d.\ samples, before turning to stationary ergodic distributions. 
While the detailed considerations of this problem takes us outside the scope of this volume, here it is worth mentioning in which cases a solution to this problem exists, and some ideas behind it. 

For stationary ergodic distributions a consistent algorithm can be constructed provided the correct number of clusters is known. The algorithm is based on calculating empirical estimates of the following measure of independence between groups of samples.  In the expression below,  $h()$ stands for Shannon entropy, and $[\cdot]^l$ is a quantization of the random variable in question to the cells of a partition similar to $B^{m,l}$ but finite.
\begin{definition}[sum-information] For stationary processes $x_1,\dots,x_k$ define the sum-information
 \begin{multline}\label{eq:smir}
  \I(\x_1,\dots,\x_N):=\sum_{m=1}^\infty {1\over m} w_m \sum_{l=1}^\infty  {1\over l} w_l
  \\ 
  \left(\sum_{i=1}^N h([X^i_{1..m}]^l)\right) - h ([X^1_{1..m}]^l,\dots,[X^N_{1..m}]^l).
 \end{multline}
\end{definition}
This quantity has certain similarities to the distributional distance: it is also a weighted sum of certain discrepancies between marginal distributions of growing dimension. However, instead of simple differences in probabilities, we are using entropy, and whereas before we were considering only pairs of random variables, here we have generalized this to groups of arbitrary sizes.
Note also that this is not an estimator but a theoretical quantity; to estimate it empirically, one replaces the probabilities $[X^i_{1..m}]^l$ with the corresponding frequencies. 

The details of the algorithms and proofs  can be found in \cite{Ryabko:17clin}.  It is worth noting that the online version of this problem (akin to the one considered in Section~\ref{s:clon})  so far remains unexplored.
\section{Change-point problems}\label{s:chp}
Change-point problems are concerned with sequences  in which the distribution of the data changes over time in an abrupt manner. The latter means that the sequence can be divided into segments, such that each segment is generated by a single time-series distribution, and between the segments the distributions are different. 

 It is  another
classical problem, with vast literature
on both parametric (see e.g. \cite{basseville:93})
and non-parametric (see e.g. \cite{brodsky:93}) methods  for solving
it. 
As usually in statistics, most literature deals with the case of i.i.d.\ data within each segment, with generalisations to dependent data reaching up to and including distributions with mixing  \cite{brodsky:93, Giraitis:95}. The important exception is the work \cite{Carlstein:93}, which considers stationary ergodic sequences. The latter work makes a further assumption that the single-dimensional marginals (of $X_i$) before and after the change point are different. As was shown in \cite{Ryabko:103s}, this assumption is not necessary; here, as in the preceding sections, we follow this latter approach.

Change-point problems can be roughly divided into {\em estimation} problems and {\em detection} problems. To better explain this, consider the case of a single change. 
A sample
$Z_1,\dots,Z_n$ is given, where, for a certain $\theta\in(0,1)$,  $Z_1,\dots,Z_{\lfloor n\theta\rfloor}$ are generated
according to some distribution $\rho_X$ and $Z_{{\lfloor n\theta\rfloor}+1},\dots,Z_n$
are generated according to some distribution $\rho_Y$. Change-point {\em estimation} is about finding the parameter $\theta$ (or, equivalently, the change point ${\lfloor n\theta\rfloor}$), knowing that it exists, that is, knowing that $\rho_Y\ne\rho_X$. On the other hand, detection problems are concerned with determining whether there is a change point in the first place, that is, finding out whether $\rho_X=\rho_Y$. Various formulations exist, mainly focusing on detecting the change quickly after it appears. 

Given the results of the preceding sections, it should be clear at this point that if all we know is that all the distributions in question are stationary and ergodic, then it is, in general, not possible to tell whether there is a change point in the sequence or not. Thus, we will be only concerned with change-point estimation problems. 

Another point that needs to be clarified is the {\em asymptotic regime} that we are using. We are working with a single sample of a fixed size, $n$,
$$Z_1,\dots,Z_{\lfloor n\theta\rfloor},Z_{{\lfloor n\theta\rfloor}+1},\dots,Z_n,$$
yet the statements will be about what happens when $n$ goes to infinity. In fact, we are talking about two samples whose lengths grow to infinity. If we imagine them being stuck together and each increasing in length to the right, then this would somehow make the change point obvious each time the length of sample to the left of it increments. This is why we are not considering an ``online'' setting where the samples would grow. Rather, we are considering only an ``offline'' version, where the sample is fixed. In this setting, saying that, for example, the estimate $\hat\theta$ approaches $\theta$ as $n$ grows to infinity simply means that for large enough $n$, $\hat \theta$ is arbitrarily close to $\theta$, and does not mean that the algorithm is dealing with samples of increasing sizes. 

An important constraint, which  is present in one way or another in all the change-point models, is on how far a change point can be from the boundaries of the samples. Indeed, if, say $Z_1$ is generated by one distribution but already $Z_2$ by another, so the change point occurs at time step 2, then hardly any algorithm can make any meaningful inference. A common way to tackle this is to require  the size of each segment (generated by a single distribution) to be linear in the length of the whole combined sample, $n$.  This is made explicit in the formulations we adopt, where we refer to change points as $\theta n$, and the goal is to estimate $\theta$. Moreover, given the fact that there are no speeds of convergence available for (stationary) ergodic   distributions, this requirement is essential, since the initial $o(n)$ part of any sample can be effectively arbitrary, whatever function one assumes in that $o(n)$.
Thus, we can state the following.
\begin{svgraybox}
 Consistent change-point estimation algorithms for stationary ergodic processes are only possible under the constraint that the length of each segment generated by a single distribution is linear in the total sample size $n$.
\end{svgraybox}

In this chapter we treat in detail  the case of a single change point. Extensions to multiple change points are given without proofs, referring the interested reader to the corresponding papers. However, it is worth noting that, in spite of the impossibility of discriminating between processes and thus detecting a change point, the case of an unknown number of change points is not entirely hopeless, and in fact in some cases admits a solution that does not require putting further restrictions on the distributions generating the data.

\subsection{Single change point}
The sample $Z=(Z_1,\dots,Z_{n})$ is the  concatenation of two
parts $X=(X_1,\dots,X_{\lfloor n\theta\rfloor})$ and $Y=( Y_1,\dots,Y_m)$, where $m=n-{\lfloor n\theta\rfloor}$,
so that $Z_i=X_i$ for $1\le i\le {\lfloor n\theta\rfloor}$ and $Z_{{\lfloor n\theta\rfloor}+j}=Y_{j}$ for $1\le
j\le m$. The samples $X$ and $Y$ are generated  by
two different stationary ergodic processes with alphabet $A$.
The distributions of the processes  are unknown. The value ${\lfloor n\theta\rfloor}$ is
called the \emph{change point}. Moreover, in this first  setting, we assume that $\theta$ is bounded away from 0 and from 1 with known upper and lower bounds:
 $\alpha n  < k
< \beta n$ for some known $0<\alpha\le\beta<1$ (for sufficiently large $n$). In the next setting we shall discuss how to get rid of this assumption.

It is required to estimate  $\theta$ (or, equivalently, the change point ${\lfloor n\theta\rfloor}$)
based on the sample~$Z$.

 For each $t$, $1\le t\le n$, denote  $U^t$  the sample
$(Z_1,\dots,Z_t)$ consisting of the first $t$ elements of the sample
$Z$, and denote
  $V^t$ the remainder $(Z_{t+1},\dots,Z_{n})$.
\begin{definition}[Change point estimator]
Define the change-point estimate $\hat \theta:  A^*\rightarrow(0,1)$ as follows:
$$
\hat \theta(X_1,\dots,X_n):={1\over n}\argmax_{t\in[\alpha n,n- \beta n]} \hat d(U^t,V^t).
$$
\end{definition}
The following theorem establishes asymptotic consistency of this estimator.

\begin{theorem}\label{th:chp}
 For the estimate $\hat \theta$ of the change point $\theta n$ we have
$$
\lim_{n\to\infty}|\hat \theta-\theta|=0\ \as
$$ where  $n$ is the size of the sample. 
\end{theorem}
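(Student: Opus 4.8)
The plan is to compute the almost-sure limit of $\hat d(U^t,V^t)$ as a function of the relative split location $\gamma=t/n$, to show that this limiting profile $\phi(\gamma)$ is a deterministic function maximised uniquely at $\gamma=\theta$, and then to transfer the maximisation from the empirical $\hat d$ to $\phi$ through a uniform convergence argument. Throughout, write $p:=\lfloor n\theta\rfloor$ for the true change point and $\hat t:=n\hat\theta=\argmax_{t}\hat d(U^t,V^t)$, and recall that $d(\rho_X,\rho_Y)>0$ since $\rho_X\neq\rho_Y$ and $d$ is a metric.

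First I would identify the profile. Fix a candidate $t\ge p$. Then $V^t=(Z_{t+1},\dots,Z_n)$ lies entirely in the $Y$-segment and has length $n-t\ge\beta n$, so by the ergodic theorem $\nu(V^t,B_i)\to\rho_Y(B_i)$ for each $i$. The prefix $U^t$ consists of the full $X$-segment (length $p$) followed by the first $t-p$ symbols of $Y$; since for a fixed event the internal-tuple counts over each segment obey the ergodic theorem, the frequency splits into a convex combination,
$$\nu(U^t,B_i)\longrightarrow \tfrac{\theta}{\gamma}\,\rho_X(B_i)+\bigl(1-\tfrac{\theta}{\gamma}\bigr)\rho_Y(B_i),$$
the straddling tuples being only $O(k)$ in number and hence negligible against the linear length $t$. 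Subtracting and summing with the weights $w_i$ gives $\hat d(U^t,V^t)\to \tfrac{\theta}{\gamma}\,d(\rho_X,\rho_Y)$. The symmetric computation for $t\le p$ (now $U^t$ is pure $\rho_X$ while $V^t$ is the blend) yields $\hat d(U^t,V^t)\to\tfrac{1-\theta}{1-\gamma}\,d(\rho_X,\rho_Y)$. Thus
$$\phi(\gamma):=d(\rho_X,\rho_Y)\cdot\begin{cases}\tfrac{1-\theta}{1-\gamma}&\gamma\le\theta,\\[1mm]\tfrac{\theta}{\gamma}&\gamma\ge\theta,\end{cases}$$
which is continuous, strictly increasing on $[0,\theta]$ and strictly decreasing on $[\theta,1]$, hence has the unique maximiser $\gamma=\theta$ with maximal value $d(\rho_X,\rho_Y)$.

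The main obstacle is to upgrade these pointwise-in-$\gamma$ limits to convergence that is uniform over all $t\in[\alpha n,n-\beta n]$, a range whose cardinality grows with $n$. I would first truncate: choose $J$ with $\sum_{i\ge J}w_i<\epsilon$, so only finitely many events, all of bounded window length $K$, matter. Consider right candidates $t\ge p$ (the left case is symmetric). For a fixed event $B_i$ the count of $B_i$-tuples internal to the $X$-segment does not depend on $t$ and is controlled by a single application of the ergodic theorem to $\rho_X$; the $t$-dependent piece is the $Y$-contamination, and writing $A^Y_\ell$ for the cumulative count of $B_i$-tuples in the first $\ell$ symbols of $Y$, everything reduces to the claim $\max_{\ell\le Cn}|A^Y_\ell-\ell\,\rho_Y(B_i)|=o(n)$ almost surely, for a constant $C$. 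This is the key step, and it follows from the elementary fact that the almost-sure Cesàro convergence $A^Y_\ell/\ell\to\rho_Y(B_i)$ upgrades automatically to $\max_{\ell\le L}|A^Y_\ell-\ell\,\rho_Y(B_i)|=o(L)$: beyond some index $\ell_0$ the deviation is below $\epsilon\ell$, while below $\ell_0$ the count is bounded, so the maximum is $o(L)$. When the contaminating segment is itself shorter than $\ell_0$ its total count is $o(n)$ against the linear denominator, so no uniformity is needed there, and the $O(K)$ straddling tuples are likewise negligible. Summing the finitely many retained events then gives $\sup_{t}|\hat d(U^t,V^t)-\phi(t/n)|\to0$ almost surely.

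Finally I would conclude. Since $p$ lies in the search range, $\hat d(U^p,V^p)\to d(\rho_X,\rho_Y)$, so the empirical maximum is at least $d(\rho_X,\rho_Y)-o(1)$. If $|\hat\theta-\theta|\ge\eta$ infinitely often, then by the strict unimodality and continuity of $\phi$ on the compact search interval there is a gap $\mu=\mu(\eta)>0$ with $\phi(\hat\theta)\le d(\rho_X,\rho_Y)-\mu$; combined with the uniform convergence above this forces $\hat d(U^{\hat t},V^{\hat t})\le d(\rho_X,\rho_Y)-\mu+o(1)$, contradicting that $\hat t$ is the maximiser. Hence $\lim_{n\to\infty}|\hat\theta-\theta|=0$ almost surely.
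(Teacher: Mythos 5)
Your proposal is correct and follows essentially the same route as the paper's proof: both arguments rest on the fact that the empirical frequencies of a segment straddling the change point converge (uniformly over split points at linear distance from the boundaries, via the cumulative-count decomposition) to the corresponding convex mixture of $\rho_X$ and $\rho_Y$, so that the distance profile is strictly diluted away from $\theta$. The only cosmetic difference is that you compute the full limiting profile $\phi(\gamma)$ and invoke its unique maximiser, whereas the paper fixes a $\gamma$-underestimation margin and lower-bounds the gap $\hat d(X,Y)-\hat d(U^s,V^s)$ directly using a single distinguishing cylinder $B_j$.
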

\begin{proof} Denote $k:={\lfloor n\theta\rfloor}$. To prove the statement, we will show that, for every
$\gamma$, $0<\gamma<1$, with probability 1 the inequality $\hat d(U^t,V^t)<\hat d(X,Y)$ holds
for each $t$ such that  $\alpha k\le t<\gamma k$, possibly except for a finite number of times (in $n$). Thus we will
show that linear $\gamma$-underestimates occur only a finite number of
times, and for
overestimate it is analogous. Fix some $\gamma$, $0<\gamma<1$ and
$\epsilon>0$.  Let $J$ be big enough to have $\sum_{i=J}^\infty
w_i<\epsilon/2$ and
also big enough to have an index $j < J$  for which
$\rho_X(B_j)\ne\rho_Y(B_j)$.
Take $M_\epsilon\in\N$  large enough to have
$|\nu(Y,B_i)-\rho_Y(B_i)|\le\epsilon/2J$ for all $m>M_\epsilon$ and
for each $i$, $1\le i\le J$, and also to have $|B_i|/m<\epsilon/J$ for each $i$, $1\le i\le J$.
This is possible since empirical frequencies converge to the limiting probabilities
a.s.; note that  $M_\epsilon$ depends on  $Y_1,Y_2,\dots$ (cf. the proof of Lemma~\ref{th:dd}).
Find a  $K_\epsilon$ (that depends on $X$) such that for all
$k>K_\epsilon$  and for all $i$, $1\le i\le J$ we 
  have 
\begin{equation}\label{eq:fr}
|\nu(U^t,B_i)-\rho_X(B_i)|\le\epsilon/2J \text{ for each $t \in[\alpha n,\dots,k]$}
\end{equation}
 (this is possible simply because $\alpha n\to \infty$). 
Furthermore, we can select $K_\epsilon$ large enough to have
$$|\nu((X_s,X_{s+1},\dots,X_k),B_i)-\rho_X(B_i)|\le\epsilon/2J$$
 for each
$s\le\gamma k$: this follows from~(\ref{eq:fr}) and the identity 
$$\nu((X_s,X_{s+1},\dots,X_k)= \frac{k}{k-s}\nu((X_1,\dots,X_k)-\frac{s-1}{k-s}\nu(X_1,\dots,X_{s-1})+o(1).$$

So, for each $s\in[\alpha n,\gamma k]$ %
we  have
\begin{multline*}   \left|\nu(V^s,B_j) - \frac{(1-\gamma)k\rho_X(B_j)+m\rho_Y(B_j)}{(1-\gamma)k+m} \right|\\ \le
\Bigg|\frac{(1-\gamma)k\nu((X_s,\dots,X_k),B_j)+m\nu(Y,B_j)}{(1-\gamma)k+m} - \\
 \frac{(1-\gamma)k\rho_X(B_j)+m\rho_Y(B_j)}{(1-\gamma)k+m} \Bigg|
+ \frac{|B_j|}{m+\gamma k}\ \
 \le
    3\epsilon/J,
\end{multline*}
for $k>K_\epsilon$ and $m>M_\epsilon$ (from the definitions of $K_\epsilon$ and $M_\epsilon$).
Hence
\begin{multline*}
\left|\nu(X,B_j)-\nu(Y,B_j)\right|- \left|\nu(U^s,B_j) -\nu(V^s,B_j)\right|\\
 \ \ \ge \left|\nu(X,B_j)-\nu(Y,B_j)\right| \hfill
\\- \left|\nu(U^s,B_j) -
\frac{(1-\gamma)k\rho_X(B_j)+m\rho_Y(B_j)}{(1-\gamma)k+m} \right|-
3\epsilon/J\\
 \ \ \ge \left|\rho_X(B_j)-\rho_Y(B_j)\right| \hfill
\\ - \left|\rho_X(B_j) -
\frac{(1-\gamma)k\rho_X(B_j)+m\rho_Y(B_j)}{(1-\gamma)k+m} \right|-
4\epsilon/J \\ = \delta_j - 4\epsilon/J,
\end{multline*}
for some $\delta_j$ that depends only on $k/m$ and $\gamma$.
Summing over all $B_i$, $i\in\N$,  we get
$$
\hat d(X,Y)-\hat d(U^s,V^s)\ge w_j\delta_j-5\epsilon,
$$
for all $n$ such that $k>K_\epsilon$ and $m>M_\epsilon$, which is
positive for  small enough~$\epsilon$.
\end{proof}
\subsection{Multiple change points, known number of change points}
The following generalization is  considered in this section. First,  the number of change points is allowed to be arbitrary, though it still has to be known. Second, we get rid of the assumption that there is a known lower bound on the distance between a change point and the sequence boundaries (its start and its end), as well as between change points. 

The details of the algorithm and the proof of its consistency are omitted and can be found in~\cite{Khaleghi:15chp}.

The problem is as
 follows.
A sample
\begin{equation}\label{eq:xchp}
\x:=
Z_{1},\dots,Z_{\lfloor n\theta_1\rfloor}, 
Z_{\lfloor n\theta_1\rfloor+1},\dots,Z_{\lfloor n\theta_2\rfloor}, \dots,
Z_{\lfloor n\theta_{\kappa}\rfloor+1},\dots,Z_n
 \end{equation}
is given, which is  formed as the concatenation of  $\kappa+1$ non-overlapping segments, 
where $\kappa\in\N$ and  $0<\theta_1 <\dots<\theta_\kappa<1$.  
Each segment is generated by some unknown process distribution. 
The  distributions that generate every pair of consecutive segments 
are different.  
The parameters $\theta_k,~k=1..\kappa$ specifying the 
change points $\lfloor n\theta_k \rfloor$ are  unknown and have to be estimated. 
The distributions that generate the segments are unknown, but are assumed to be stationary ergodic. 
A formal probabilistic model for this process is via considering the matrix of random variables \eqref{eq:mat1}, where the marginal distribution of each row is stationary ergodic. The sample $\x$ is then formed by concatenating parts of these rows. 

Denote for convenience $\theta_0:=1$ and $\theta_{\kappa+1}:=n$ and define the minimal distance between change points as
\begin{equation}\label{eq:lmin}
\mingap:=\min_{k=1..\kappa+1} \theta_k-\theta_{k-1}.
\end{equation}
Let us first assume that there is a known lower bound $\lambda>0$ on this parameter: $\lambda<\mingap$.
Then, knowing this lower bound and the number of change points $\kappa$, one can construct a consistent algorithm as follows.

Break the whole sample $\x$ into short consecutive segments each of which cannot contain more than one change point (the actual algorithm, proposed in \cite{Khaleghi:15chp}, uses segments of length $n\mingap/3$). Find a candidate change point in each of the segments, using the single-change-point algorithm of the previous section. Then, select $\kappa$ of these candidate change-points that maximize the following scoring function. The scoring function  $\Delta_{\x}(a,b)$ takes an arbitrary segment $(a,b)$ in the sample and measures how close to each other (in the distributional distance)  its first and second halves are
\begin{equation}\label{defn:Delta}
\Delta_{\x}(a,b):= \hat{d} \left(Z_{a.. \lfloor \frac{a+b}{2}\rfloor},Z_{\lceil \frac{a+b}{2}\rceil..b} \right).
\end{equation} 
The reason the algorithm works is as follows. The single-change point estimates are consistent in the case there is exactly one change point in the segment they are applied to; this can be demonstrated in the same way the single change-point estimator was proven consistent in the previous section.
Next, the segments that do not contain any change point will see their score~\eqref{defn:Delta} converge to 0, while those that do contain a change point, to a non-zero constant. Since we know how many change points there are, it suffices to select the $\kappa$ highest-scoring ones.

The next step is to get rid of the requirement of a known bound $\lambda$ on $\mingap$. This is done by constructing a series of $\kappa$-tuples of change-point estimators, each for a different value of a candidate $\mingap$, which are then combined with carefully selected weights. 
This gives the following theorem. 
\begin{theorem}
 There exists an algorithm for finding $\kappa$ change points that is asymptotically consistent provided each segment is generated by a stationary ergodic distribution and $\kappa$ is known.
\end{theorem}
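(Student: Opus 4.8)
The plan is to prove the theorem in two stages, mirroring the reduction already set up in this section: first assuming a known lower bound $\lambda>0$ on the minimal gap $\mingap$ between consecutive change points, and then removing this assumption by aggregating estimators over a grid of candidate values of $\lambda$.

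For the known-$\lambda$ stage I would slide a window of length $\ell_n$ proportional to $n\lambda$ (the choice $\ell_n=n\lambda/3$ is convenient) along the sample $\x$, shifting it by a fixed fraction of its own length, and apply the single-change-point estimator $\hat\theta$ of the previous subsection inside each window to obtain a candidate change point, together with the score $\Delta_{\x}(\cdot,\cdot)$ of~\eqref{defn:Delta} evaluated on that window. The window length is chosen so that two competing requirements hold at once: since $\ell_n<n\mingap$, no window can contain more than one change point; and since consecutive windows overlap, every true change point is bounded away from the boundaries of at least one window. I would then greedily retain the $\kappa$ highest-scoring candidates subject to a separation constraint, so that two windows straddling the same change point are not both kept. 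Consistency rests on the two available results. In a window whose interior contains a change point bounded away from its endpoints, the estimator behaves as in Theorem~\ref{th:chp}, so its candidate converges to the true location, while by Lemma~\ref{th:dd} the window's score converges almost surely to a strictly positive limit. In a window generated by a single distribution, both halves share the same limiting marginal, so Lemma~\ref{th:dd} forces $\Delta_{\x}\to0\as$ Since the overlapping windows guarantee that every true change point is central to at least one window, with probability one only finitely many selections are wrong and $|\hat\theta_k-\theta_k|\to0\as$ for each $k$.

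To remove the dependence on $\lambda$, I would fix a decreasing sequence of candidate values $\lambda_j\downarrow 0$ and, for each $j$ whose induced window length is large enough at the current $n$, run the stage-one algorithm to produce a $\kappa$-tuple of estimates $\hat\theta^{(j)}_1,\dots,\hat\theta^{(j)}_\kappa$. The final estimate of each $\theta_k$ is a weighted combination $\sum_j w^{(n)}_j\,\hat\theta^{(j)}_k$. The guiding observation is that for the true, unknown $\mingap$ there is an index $j^\ast$ with $\lambda_{j^\ast}<\mingap$, and every scale $j\ge j^\ast$ is asymptotically consistent by stage one, whereas the finitely many coarser scales $j<j^\ast$ may place two change points in a single window and fail. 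The weights must therefore be designed so that, as $n\to\infty$, essentially all the mass is transferred onto the reliable fine scales while the bounded contribution of the unreliable coarse scales is driven to zero.

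The hard part will be exactly this weighting step. A naive fixed-weight average does not work, because the finitely many coarse scales $j<j^\ast$ carry fixed positive weight and contribute an irreducible bias, yet $j^\ast$ is unknown and so these scales cannot be discarded by hand. The resolution is to let the weights depend on $n$, so that the aggregate concentrates on the finest currently usable scale, and to show that this concentration occurs fast enough to overcome the (at worst constant) error of the coarse scales. Making this trade-off precise, and verifying that the resulting estimator is computable without knowledge of $\mingap$ while remaining consistent from some $n$ on almost surely, is the crux of the argument; the full construction and its analysis are carried out in~\cite{Khaleghi:15chp}.
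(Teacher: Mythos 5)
Your proposal follows essentially the same route as the paper: a first stage that assumes a known lower bound $\lambda$ on $\mingap$, uses windows of length proportional to $n\lambda$ so that no window contains more than one change point, applies the single-change-point estimator in each window, ranks candidates by the score $\Delta_{\x}$ of~\eqref{defn:Delta} and keeps the $\kappa$ best; followed by a second stage that removes the known-$\lambda$ assumption by combining the $\kappa$-tuples obtained at a grid of candidate scales with carefully chosen weights. Like the paper, you defer the delicate weighting analysis to~\cite{Khaleghi:15chp}, so the two arguments coincide in both structure and level of detail.
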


\begin{svgraybox}
 For stationary ergodic time series, asymptotically consistent estimation of  multiple change points whose number is known is possible  without any extra assumptions, besides that the length of each segment is linear in the sample size $n$.
\end{svgraybox}

\subsection{Unknown number of change points}
The result on impossibility of process discrimination (Section~\ref{s:hom}) implies that it is provably impossible to distinguish between the cases of 0 and 1 change point for stationary ergodic samples. Yet, it appears impractical to assume that the exact number of change points is given to an algorithm. Thus, a search for other, more constrained, formulations is warranted. Two such formulations are briefly considered here: providing an exhaustive list of change points, and the case of a known number of different distributions but an unknown number of change points. The details of the algorithms and proofs are left out, and can be found in the corresponding papers \cite{Khaleghi:12mchp,Khaleghi:14}. In both of these formulations we assume a known lower bound on the distance $\mingap$ between the change points~\eqref{eq:lmin}.

It is worth making a distinction with the related problem of clustering. In that problem, if the number of clusters $k$ is unknown, then all we can do is to resort to more restrictive assumptions on the process distributions (see Section~\ref{s:clunk}). On the other hand, for the change-point problem, it is still possible to get around the fact that the number of change points $k$ is unknown, while  only assuming that the process distributions are stationary ergodic. Specifically, one formulation that allows us to do it is assuming that the total number of distributions  is known (Section~\ref{s:knownr} below). Indeed, the number of distributions defines the number of clusters in the clustering problem, but, in change-point problems, still  allows the number of change points to be arbitrary.
\subsubsection{Listing change points}
Not knowing the number of change points, one could try to provide a {\em ranked list} of change points, that should include all the 
``true'' change points, and possibly also other, spurious,  points. The longest such a list could be is $n$, the size of the sample; or $\lceil 1/\lambda\rceil$ if we assume that the minimum distance between the change points is lower-bounded by $\lambda$.  Such a ranked list could be useful if we knew that the first $\kappa$ listed change points are the true change points, even if the rest of the listed points are extraneous.  It turns out that this is indeed achievable. The algorithm is very similar to the one of the preceding section, with~\eqref{defn:Delta}  used as a ranking  function and the single-change-point algorithm used to find candidate change points in each of the segments.
The result one can obtain is thus the following \cite{Khaleghi:12mchp}.
\begin{theorem}
 There exists an algorithm that, given a sample $\x$~\eqref{eq:xchp} generated by stationary ergodic distributions, provides a list of change-point candidates which has the property that, with probability 1 as $n$ goes to infinity, from some $n$ on its first $\kappa$ elements are within $o(n)$ of the change points $\theta_i$, $i=1..\kappa$.
\end{theorem}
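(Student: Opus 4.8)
The plan is to reuse the single-change-point estimator $\hat\theta$ of Theorem~\ref{th:chp} as a candidate generator and the function $\Delta_{\x}$ of~\eqref{defn:Delta} as a ranking score. Concretely, I would fix a window length $L:=\lfloor n\lambda/3\rfloor$ (using the known lower bound $\lambda<\mingap$) and slide a family of overlapping length-$L$ windows across $\x$, dense enough that every position lies in the central half of some window. In each window I apply the single-change-point algorithm of the previous section to obtain one candidate, and I evaluate $\Delta_{\x}$ on a length-$L$ window \emph{centred at that candidate}. I then sort the candidates by decreasing $\Delta_{\x}$-score and greedily thin the list, keeping a candidate only if it lies at distance $>\tau:=\lfloor n\lambda/2\rfloor$ from every candidate already kept. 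The output is this thinned, ranked list.

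Two ingredients must be established, both inherited from the earlier results. Write $\rho^{(k)}$ for the distribution of the $k$th segment and set $\delta_{\min}:=\min_{k}d(\rho^{(k)},\rho^{(k+1)})>0$, which is positive because consecutive segments have different distributions and $d$ is a metric. First, a \emph{consistency} fact: since $\lambda<\mingap$, every length-$L$ window contains at most one change point, and a window straddling $p_k:=\lfloor n\theta_k\rfloor$ with $p_k$ bounded away from its endpoints is exactly a two-segment concatenation of the form treated in Theorem~\ref{th:chp}; hence its candidate $\hat p$ satisfies $|\hat p-p_k|=o(n)$ a.s.\ from some $n$ on. Because the windows overlap, each $p_k$ sits in the central part of some window, so such an accurate candidate always exists. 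Second, a \emph{score-evaluation} fact: by the same frequency-averaging computation used in the proofs of Lemma~\ref{th:dd} and Theorem~\ref{th:chp}, a centred window whose change point lies at offset $r$ from its centre has $\Delta_{\x}\to \max\{0,\,1-2r/L\}\,d(\rho^{(k)},\rho^{(k+1)})$; this equals the full distance when $r=o(n)$, decays linearly in $r$, and is $0$ as soon as the window contains no change point. In particular any candidate far from every change point scores $\to 0$.

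The ranking argument then goes as follows. Group the candidates by nearest change point: the $L/2$-neighbourhoods of $p_1,\dots,p_\kappa$ are pairwise disjoint and separated by more than $\tau$ (because $\lambda<\mingap$ gives $p_{k+1}-p_k\ge n\mingap>n\lambda>\tau+L$), so each candidate falls into at most one such cluster, the remaining ones being orphans with score $\to0$. Within cluster $k$ the maximal limiting score is $d(\rho^{(k)},\rho^{(k+1)})\ge\delta_{\min}$, attained only by candidates with $r=o(n)$; hence for large $n$ the empirical cluster-maximiser is within $o(n)$ of $p_k$. As $n\to\infty$ every cluster-maximiser outranks every orphan (scores separate as $\ge\delta_{\min}$ versus $\to0$), the thinning suppresses every non-maximal cluster member once its maximiser is kept (each lies within $L<\tau$ of it), and it leaves the $\kappa$ cluster-maximisers intact since they are pairwise more than $\tau$ apart. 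Therefore the first $\kappa$ entries of the output are precisely the $\kappa$ cluster-maximisers, one within $o(n)$ of each $\theta_i$, which is the assertion.

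The hard part will not be the two limit computations, which follow the template of the earlier proofs, but the combinatorics of the ranking. One must guarantee simultaneously that each change point owns a high-scoring accurate candidate, and that a candidate which is near a change point but off-centre—and which can carry a moderately large score when the corresponding distributional gap $d(\rho^{(k)},\rho^{(k+1)})$ is large—never gets ranked above and thereby displaces the accurate candidate of a \emph{different}, small-gap change point. This is exactly what forces the two-scale choice $L<\tau<n\mingap$ together with the greedy thinning, and it is the delicate interplay that the full treatment in \cite{Khaleghi:12mchp} makes precise. The known lower bound $\lambda$ on $\mingap$ is indispensable here: without it the cluster radii cannot be fixed in advance and the required separation between clusters cannot be guaranteed.
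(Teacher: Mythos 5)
Your proposal follows essentially the same route the paper sketches for this result (the paper itself gives no proof, only the algorithm outline — single-change-point estimation on length-$\Theta(n\lambda)$ windows, ranking by the score~\eqref{defn:Delta} — and defers the details to \cite{Khaleghi:12mchp}). The elaborations you add (overlapping windows so every change point is central in some window, re-scoring on a window centred at each candidate, and greedy thinning at separation $\tau$ with $L<\tau<n\mingap$) are consistent with that outline and correctly resolve the one delicate point, namely that an off-centre candidate of a large-gap change point is suppressed by its own cluster's maximiser before it can displace the accurate candidate of a small-gap change point.
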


\subsubsection{Known number of distributions, unknown number of change points}\label{s:knownr}
A sample with $\kappa$ change points can be, in general, generated by $\kappa+1$ different distributions. However, it can be generated by fewer distributions too, for example, by two distributions only irrespective of the value of $\kappa$. This formulation with the total number of distributions $r$ smaller than $\kappa+1$ may make sense in various applications. For example, imagine a text written by two authors each of which wrote many different parts of the text. Here the number of distributions is 2 and is known a priori, but the number of change points may be large and unknown.  

It turns out that if the number of change points is unknown, it is still possible to locate them, if the total number of distributions generating the segments is known. Here as well we assume a known lower-bound $\lambda$ on the minimal distance between change points. 
The algorithm starts by producing an exhaustive list of $1/\lambda$ change points with the algorithm of the previous section.  
It then clusters all the  resulting segments of the sample into $r$ clusters, where $r$ is the number of different distributions that is assumed given.
The clustering algorithm can be chosen to be that of Section~\ref{s:clusta}, with $r$ as the target number of clusters.
This result in the following statement.

\begin{theorem}
 There exists an algorithm that, given a sample $\x$~\eqref{eq:xchp} generated by $r$ different stationary ergodic distributions, the number $r$ and a lower-bound $\lambda$ on $\mingap$, 
provides an estimate $\hat \kappa\in[1..n]$ and a  list of change points estimates $\theta_i$, $i=1..\hat\kappa$ that are asymptotically consistent: $$\lim_{n\to\infty}\hat \kappa=\kappa$$ and $$|\hat\theta_i-\theta_i|=o(n)$$ for $i=1..\kappa.$
\end{theorem}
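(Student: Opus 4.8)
The plan is to follow the two-stage scheme sketched in the text: first over-produce change-point candidates so as to reduce the problem to clustering, and then let the clustering algorithm decide which candidates are genuine. Concretely, I would run the change-point listing procedure of the preceding subsection on the sample $\x$ of~\eqref{eq:xchp} to obtain a ranked list of at most $\lceil 1/\lambda\rceil$ candidate positions $c_1<\dots<c_p$. Crucially, since $\kappa$ is unknown I never select the top $\kappa$; I keep \emph{all} candidates and let clustering sort them out. The only property I need from the listing result is its guarantee: with probability~$1$, from some $n$ on, each true change point $\lfloor n\theta_j\rfloor$, $j=1..\kappa$, lies within $o(n)$ of one of the candidates (indeed of one of the top-$\kappa$ ranked ones), the remaining candidates being spurious points falling inside segments generated by a single distribution.

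Next I would show that the candidates cut $\x$ into $p+1$ consecutive pieces $P_0,\dots,P_p$ enjoying two properties. First, each piece has length linear in $n$: since the single-change-point estimator applied inside each window of length $\sim n\lambda/3$ is, by construction, constrained (via its $\alpha,\beta$ as in Theorem~\ref{th:chp}) to return a point bounded away from that window's boundaries by a fixed fraction, consecutive candidates are separated by at least a constant multiple of $\lambda n$, so no piece collapses to sublinear size. Second, each piece is \emph{asymptotically pure}: because every true change point is within $o(n)$ of some candidate while the candidates are linearly separated, any true change point lying inside a piece $P_i$ must in fact lie within $o(n)$ of one of its endpoints $c_i,c_{i+1}$. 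Hence each $P_i$ is generated by a single distribution except for an initial and/or terminal block of length $o(n)$. Running the averaging argument of Lemma~\ref{th:dd} exactly as in the proof of Theorem~\ref{th:chp} --- where an $o(1)$ fraction of contaminating symbols does not affect limiting frequencies --- then gives that the empirical distribution of each $P_i$ converges almost surely and that $\hat d(P_i,P_{i'})\to d(\rho,\rho')$ for the dominant distributions $\rho,\rho'$ of the two pieces; in particular $\hat d(P_i,P_{i'})\to 0$ if and only if the two pieces have the same dominant distribution.

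With this in hand I would cluster $P_0,\dots,P_p$ into $r$ groups using Algorithm~\ref{a0}. Exactly $r$ distinct distributions appear among the dominant distributions of the pieces, and the convergence of $\hat d$ just established is precisely the ingredient used in the proof of Theorem~\ref{th:cons}, so the same argument shows the clustering is strongly consistent: for all large $n$ the pieces are grouped exactly according to their dominant distribution. Finally I would read off change points by scanning the pieces left to right and declaring a change point at every $c_i$ where the cluster label changes, setting $\hat\kappa$ to the number of such positions. Two consecutive true segments always carry different distributions, so a label change --- hence a detected change point within $o(n)$ of the true one --- occurs at every true change point; conversely, within a single true segment all pieces share the same dominant distribution and hence the same label, so spurious candidates produce no label change. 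This gives $\hat\kappa=\kappa$ and $|\hat\theta_i-\theta_i|=o(n)$ from some $n$ on, almost surely.

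The main obstacle is the robustness argument of the second paragraph: one must show the whole inference chain survives the fact that the pieces are only \emph{approximately} single-distribution samples. This requires a contamination-tolerant version of Lemma~\ref{th:dd} (boundary blocks of length $o(n)$ do not perturb the limiting empirical distance) and a verification that the consistency proof of Theorem~\ref{th:cons} goes through verbatim with $\hat d(P_i,\rho)\to 0$ replacing the exact single-distribution statement. One must also ensure that no piece is sublinear and that a true change point falling near a window boundary is still captured --- which is exactly what the linear-separation property and the listing guarantee are invoked to secure.
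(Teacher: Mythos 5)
Your proposal follows exactly the route the paper takes: produce an exhaustive list of $\lceil 1/\lambda\rceil$ candidate change points via the single-change-point estimator applied in windows of length $n\lambda/3$, then cluster the resulting pieces into $r$ groups with Algorithm~\ref{a0} and declare a change point wherever the cluster label of consecutive pieces changes. The paper itself only sketches this construction and defers the proof to \cite{Khaleghi:14}; the technical points you single out --- linear separation of the candidates, an $o(n)$-contamination-tolerant version of Lemma~\ref{th:dd} for the boundary blocks of each piece, and the resulting applicability of the consistency argument of Theorem~\ref{th:cons} to the approximately pure pieces --- are precisely the ingredients that the full proof supplies.
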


The details of the algorithm and proofs can be found in \cite{Khaleghi:14}.

In conclusion, we can formulate the following statement.
\begin{svgraybox}
For stationary ergodic distributions generating the data between change points whose number is unknown, it is possible to find the correct number  and provide consistent estimates of the change points if (and only if) the total number of different distributions  is known.
\end{svgraybox}

\chapter{Hypothesis testing}\label{ch:ht}
Given a sample $X_1,\dots,X_n$,  %
 we wish to decide whether it was generated by a distribution  belonging to a family $H_0$, versus it was generated by a  distribution 
belonging to a family $H_1$. 
As before, the only assumption we are willing to make about the the distribution generating the sample is that it is stationary ergodic.

In this chapter where we assume that $X_i$ are from a {\em finite  alphabet} $A$. Moreover, unlike in the previous chapters, in this one we shall delve a little deeper into the theory of stationary processes, and use some of its facts other than the simple convergence of frequencies. In particular, it will be of essence that the space $\S$ of stationary processes is compact with the topology of the distributional distance: a fact that holds for finite-alphabet processes with distance~\eqref{eq:ddisd} but not for real-valued processes with the distance~\eqref{eq:ddisr}.

The material of this chapter mainly follows \cite{Ryabko:121c,Ryabko:141u}.

\section{Introduction}
A test is a function that takes a sample and gives a  binary (possibly incorrect) answer:  the sample
was generated by a distribution from $H_0$ or from $H_1$.
 An answer $i\in\{0,1\}$ is correct if the sample is generated by a distribution that belongs to $H_i$, and otherwise the test is said to  make an error. It often makes sense to distinguish between two types of error, depending on which of the hypotheses holds true. Thus, we say that the test makes a {\em Type I} error if $H_0$ is true but the test says $H_1$ is true, and we say that the test makes {\em Type II error} if the opposite takes place: the test says $H_0$ while $H_1$ is true. 
Note that in case neither $H_0$ nor $H_1$ holds true  the output of the test may be arbitrary and we are not speaking about any kind of error; generally, one cannot say anything about the behaviour of the test in such a  case.

Here we are concerned with the general question of characterizing those pairs of $H_0$ and $H_1$ for which consistent tests exist.

 Several notions of consistency are considered.
 For two of these notions of consistency we find some necessary and some sufficient conditions for the existence of a consistent test, expressed
 in topological terms. The topology is that of distributional distance in the form~\eqref{eq:ddisd}.  For one notion of consistency, namely, for asymmetric  consistency,  the necessary and sufficient conditions coincide when $H_1$ is the 
 complement of $H_0$, thereby providing a complete characterization. This suggests that the topology of the distributional distance is indeed the right one to study these problems.  %

Each of the notions of consistency considered has been studied extensively (sometimes in slightly different formulations) for i.i.d.\ data. It is thus instructive to provide characterisations of those hypotheses for which consistent tests exist for this more restrictive model and see how it relates to the general case of stationary ergodic time series, which we do in this chapter whenever possible.

In the rest of this section we consider various examples of the problem of hypothesis testing that motivate studying it in the general form; we also introduce various notions of consistency used. In the next section, a simple example of hypothesis testing is considered in some detail, exposing various concepts used, including the notions of consistency, the topological criteria for consistency in simpler spaces and the role of the ergodic  decomposition.

\subsection{Motivation and examples}
Before introducing the definitions of consistency, let us give some examples motivating the general problem
in question. Most of these examples are classical problems studied in mathematical statistics and related fields,
mostly for i.i.d. data, with much literature devoted to each of them.
The classical Neyman-Pearson formulation  of the hypothesis testing problem is testing a simple hypothesis $H_0=\{\rho_0\}$ versus
a simple hypothesis $H_1=\{\rho_1\}$, where $\rho_0$ and $\rho_1$ are two  distributions that are  completely known.
A more complex   but more realistic  problem is when only 
one of the hypothesis is simple, $H_0=\{\rho_0\}$ but the alternative is general, for example, in our framework  $H_1$ could be  the set 
of all stationary ergodic processes that are different from $\rho_0$. 
This is the so-called goodness-of-fit  or identity-testing problem.
Here $\rho_0$ would typically be some specific distribution of interest, such as the Bernoulli i.i.d.\ distribution with equal probabilities of outcomes.

Generalizing the latter example is the class of hypothesis testing problems that can be described as  {\em model verification} problems. Suppose
we have some relatively simple (possibly parametric) set of assumptions, and we wish to test whether 
the process generating the given sample satisfies this assumptions. As an example, $H_0$ can be the
set of all $k$-order Markov processes (fixed $k\in\N$) and $H_1$ is the set of all stationary ergodic 
processes that do not belong to  $H_0$; one may also wish to consider more restrictive alternatives, for example 
$H_1$ is the set of all $k'$-order Markov processes for some $k'>k$. Of course, instead of Markov processes
one can consider other models, e.g. hidden Markov processes. A similar problem 
is that of testing that the process has entropy less than some given $\epsilon$ versus its entropy exceeds $\epsilon$, 
or versus its entropy is greater than $\epsilon+\delta$ for some positive $\delta$. 

Yet another type of hypothesis testing problems concerns {\em property testing}. Suppose we are given 
two samples, generated independently of each other by stationary ergodic distributions, and we wish to test 
the hypothesis that they are independent versus they are not independent. Or, that they are generated by the same
process versus they are generated by different processes. 

In all the considered cases, when the hypothesis testing problem turns out to be too difficult (i.e. there 
is no consistent test for the chosen notion of consistency) for the case of stationary ergodic processes, one may wish to restrict either $H_0$, $H_1$
or both $H_0$ and $H_1$ to some smaller class of processes. Thus, one may wish to test the hypothesis of independence when, 
for example, both processes are known to have finite memory, but the alternative is allowed to be general: the complement of the set $H_0$ to the set  $\mathcal E$ of stationary ergodic processes (on pairs). 

\section{Types of consistency}\label{s:cons}

There are different types of consistency of tests, corresponding to how strong a guarantee one wishes
to have on the probability of error. Three notions of consistency are considered here: uniform, asymmetric (or $\alpha$-level),   and asymptotic consistency. They represent different trade-offs between the strength of the guarantees one can obtain and the generality of hypotheses pairs for which consistent tests exist. 

\subsection{Uniform consistency}
We start with what appears the strongest notion, uniform consistency. 
It requires  both probabilities of error to be uniformly bounded.
 More precisely,  {\em uniform consistency} requires that for each $\alpha$ there exist a sample size $n$ such that probability of error 
 is upper-bounded by $\alpha$ for samples longer than $n$.
\begin{definition}[uniform consistency]
A test $\phi$ is called uniformly consistent if for every $\alpha$ there is an $n_\alpha\in\N$ such that
for every $n\ge n_\alpha$ the probability of error on a sample of size $n$ is less than $\alpha$: $\rho(X\in A^n: \phi(X)=i) <\alpha$ for every $\rho\in H_{1-i}$ and every $i\in\{0,1\}$.
\end{definition}
This notion of consistency has been extensively studied in the algorithms community for i.i.d.\ data under a slightly different formulation:
the probability of each error is required to be bounded by a fixed number, typically 1/3, and the problem is to 
find minimal sample sizes necessary to achieve this error. The interpretation is that if one can get 1/3 probability 
of error then one can make it arbitrary small by taking more (independent) samples; see, for example, \cite{Goldreich:98,Batu:01,Batu:04,Guha:06}. %
The definition above is adapted for dependent data.

For i.i.d.\ samples it is easy to establish 
a criterion for the existence of a consistent test: there exists a uniformly consistent test if and only if $H_i, i=\{0,1\}$ are contained in closed non-overlapping sets.
Here the topology is just that of the  Euclidean distance  on the space of parameters defining the distributions over $A$.
Indeed, to see that the condition is necessary, it is enough to notice that the sets of distributions $\rho$ satisfying $\rho(B)\le\alpha$
are closed  for any fixed $B\in A^*$ and $\alpha\in[0,1]$, in particular for $B:=\{z_{1..n}:\phi(z_{1..n})=0\}$. 
On the other hand, to construct a test it is enough to take a neighbourhood over (say) $H_0$ of radius that slowly decreases with $n$:
for large enough $n$ the neighbourhood will not intersect $H_1$ (since both sets are closed), and one can use concentration of measure
results for i.i.d.\ distributions to show that if the radius decreases slow enough then the test is consistent. From this description 
it is clear that the some generalizations to processes with mixing are possible.  See also \cite{Csiszar:04} for related results (for i.i.d.\ data).

\subsection{Asymmetric consistency}
The next notion of consistency is the classical one used in mathematical statistics (e.g.,\cite{Lehmann:86,Kendall:61}): 
the probability of Type I error is fixed at the given level $\alpha$, and the probability of Type II error 
goes to 0.
This definition is well-suited for pair of hypotheses  that are by nature asymmetric, such as singleton $H_0$, or 
hypotheses where $H_1$ is the complement to $H_0$, for example, ``the distribution belongs to a given parametric model'' versus ''it is 
stationary ergodic but not in the model,''  or the examples considered in this work: ``distributions generating a pair of samples are independent'' versus they 
are not, or ``distributions are the same'' versus they are not. 
The definition is as follows.

\begin{definition}[Asymmetric  consistency]
 Call $\alpha$-level  test $\psi^\alpha, \alpha\in(0,1)$ asymmetrically consistent as a test of $H_0$ against $H_1$ if:
\begin{itemize}
\item[(i)] The probability of Type I error is always bounded by $\alpha$: $\rho\{X\in A^n: \psi^\alpha(X)=1\}\le\alpha$ for every $\rho\in H_0$,
every $n\in\N$ and every $\alpha\in(0,1)$, and
\item[(ii)]  Type II error is made not more than a finite number of times with probability 1: 
 $\rho(\lim_{n\rightarrow\infty} \psi^\alpha(X_{1..n})=1)=1$ %
for every  %
$\rho\in H_1$ and every $\alpha\in(0,1)$. 
\end{itemize}
\end{definition}

Similar to the  case of uniform consistency, here it is easy to see what is the criterion for the i.i.d.\ samples.
There exists an asymmetrically  consistent  test if and only if $\cl{H_0}$ does not intersect $H_1$; see the next section for a more detailed explanation, and also  \cite{Csiszar:04} for this and related results.

\subsection{Asymptotic consistency}
Finally, what appears to be the weakest notion of consistency is perhaps the simplest to formulate:
the error (of each type) has to be made finitely many times w.p.1. 
\begin{definition}[asymptotic consistency] 
A test $\phi$ is called uniformly consistent if for every $\rho\in H_i$, $i=0,1$  we have $\lim_{n\to\infty}\phi(z_{1..n})=i$ $\rho$-a.s.
\end{definition}
This weakest notion of consistency gives strongest negative results, which is why we used it in the Section~\ref{s:hom} to show that there is no consistent test for homogeneity (process discrimination).

For real-valued i.i.d.\ samples, where the hypotheses are formulated about the means of the distributions,  this notion has been studied in \cite{Dembo:94}.
For the case of distributions with finite $p$ moments with $p>1$ the  following criterion is obtained: there exists an asymptotically 
consistent test if and only if $H_0$ and $H_1$ are contained in disjoint $F_\sigma$ sets. (A set is $F_\sigma$ if it is a countable 
union of closed sets.) It can be seen that the same criterion holds in our case (finite-valued distributions) if the samples are i.i.d.

This notion of consistency has been given considerable attention in the time-series literature, perhaps because it is rather 
weak and thus appears more suited for time-series analysis. In particular, some  specific hypotheses have been studied in \cite{Ornstein:90,Morvai:05}. 
For the general case of stationary ergodic distributions, \cite{Nobel:06} obtains a generalization of the results of \cite{Dembo:94}, providing some sufficient conditions for the existence of a consistent test for real-valued processes, in terms of the topology of weak convergence.

\subsection{Other notions of consistency}
Many other notions of consistency exist in statistics and related fields.
 For example,   a variation on the notion of asymmetric consistency common in the literature  is requiring   the probability of Type~I error to be bounded by $\alpha$ only in asymptotic.
 Most of other notions of consistency  are focussed on speeds of convergence and thus are of little interest in our context.  
For example, one can require the probability of error (of each type) to decrease exponentially fast; see~\cite{Csiszar:04} for some characterisations.

\section{One example that explains hypotheses testing}\label{s:one}
Let us consider a rather simple example that illustrates various concepts used and difficulties encountered.
The example will be that of homogeneity testing (or process discrimination) for binary-valued ($A=\{0,1\}$) processes; we will consider i.i.d.\ processes and  Markov chains, in addition to stationary ergodic distributions. For the i.i.d.\ case, it is easy to find a e topological characterisation of those hypotheses for which consistent tests exist, so we do this for illustrative purposes.  The example hypothesis considered here, homogeneity testing,  is the problem we have addressed in  Section~\ref{s:hom} for asymptotic consistency in the general case. Here the main focus is on a stronger notion of consistency, namely asymmetric consistency, and on simpler processes. The goal is to illustrate the topological conditions that characterize the existence of consistent tests. The Markov case already shows why ergodic decomposition plays such an important role in finding the criteria for the existence of tests.

\subsection{Bernoulli i.i.d.\ processes}
Before considering dependent time series, let us see what would be the criterion for the existence of an asymmetrically consistent test for i.i.d.\ data, and apply it to our example of homogeneity testing. 

Thus,  we are speaking about Bernoulli distributions. Each such distribution $\rho$  can be identified with the parameter $\rho(X_1=0)\in[0,1]$, and each hypothesis $H_i$ with a subset of the parameter space $[0,1]$.
Recall that  a test $\phi_\alpha(\x)$, which receives an additional parameter $\alpha\in(0,1)$,  is said to be asymmetrically consistent, if, for every sample size $n$ the and every probability of Type I error (that is, error under $H_0$) is upper-bounded by $\alpha$, while the probability of Type II error (error under $H_1$) goes to 0.  
It is easy to see that there exists an asymmetrically  consistent  test if and only if $\cl{H_0}$ does not intersect $H_1$. 
Here the topology is just that of the  Euclidean distance  on the parameter space. 
Indeed, to see that the condition is necessary, it is enough to notice that the sets of distributions $\rho$ satisfying $\rho(B)\le\alpha$
are closed  for any fixed $B\in A^*$ and $\alpha\in[0,1]$, and in particular for $B:=\{z_{1..n}:\phi(z_{1..n})=1\}$. Thus, if, for the given sample size $n$, the probability that the test says  $H_1$  is upper-bounded by $\alpha$ for every $\rho\in H_0$ (Type I error) then the same holds for every $\rho\in\cl{H_0}$.
We have shown that it is necessary for $H_0$ to be closed in order for an asymmetrically consistent test against the complement to exist. To show sufficiency, we need to construct a test for an arbitrary closed $H_0$. To do so, consider a closed set $H_0$  and the closed set $C\subset[0,1]$ of parameters that defines it. Take a sequence of neighbourhoods $C_n$ over  $C$ of such radii that, for every $n$ and $\rho\in H_0$, the probability of samples of size $n$ that the frequency of 0 falls into $C_n$ equals $\alpha$. Note that the radius of these neighbourhoods  decreases with $n$ (because of the law of large numbers), which means that for every distribution $\rho\in H_1$ there is a large enough $n$ such that (the parameter that defines) $\rho$ is outside $C_n$. This implies that the Type II error goes to 0. 

The hypothesis of homogeneity is formulated for $\x_1,\x_2$ and states that their distributions $\rho_1,\rho_2$ are equal. Thus, we are speaking about distributions on pairs of samples (which, for the sake of simplicity, we consider independent). For Bernoulli distributions, this is a two-parameter space $[0,1]^2$. The hypothesis $H_0$ is the diagonal $\{(x,x): x\in[0,1]\}$, which is of course closed, and so a consistent test exists. 
Similarly, for uniform consistency the criterion is that $\cl{H_0}\cap \cl{H_1}=\emptyset$. Thus, there is no uniformly consistent test for homogeneity, and, more generally, there is no uniformly consistent test for any $H_0$ against its complement.  If    we want to have a uniformly consistent test for homogeneity, we need to change the alternative hypothesis $H_1$. For example, change $H_1$ to ``the distributions differ by at least $\epsilon$.''        This ensures the existence of a uniformly consistent test at the cost of  creating  an $\epsilon$-buffer zone between $H_0$ and $H_1$, in which, in general, we cannot say anything about the behaviour of a test.                                                                                                                                                                                                                                                                                                                                                                      

\subsection{Markov chains}                                                                                                                                                                                                                                                                                                                                                                               Moving on to the case of two-state Markov chains,  we have now two $[0,1]$-valued  parameters: the probabilities to change the state. As before, the state space is binary: $A=\{0,1\}$. Let us try to guess that the criterion for the existence of an asymmetrically consistent test is the same as in the i.i.d.\ case: there exists a consistent test iff $\cl{H_0}\cap H_1=\emptyset$, with the Euclidean topology of the parameter space,  and let us look what it gives for the hypothesis of homogeneity.
Consider a specific set of Markov chains, call it $m_\epsilon$. These are defined so  that the probability to change a state (from 0 to 1 as well as from 1 to 0) for the chain $m_\epsilon$ is $\epsilon$, and the initial distribution is given by $m_\epsilon(X_1=0)=1/2$. When $\epsilon$ goes to $0$, the limit  of $m_\epsilon$ (in the space $[0,1]^2$ of parameters) is $m_0$. The latter is a stationary distribution which is a mixture of two Dirac distributions $\delta_0$ and $\delta_1$: one concentrated on the sequence of 0s and the other on the sequence of 1s. This is the {\em ergodic decomposition} of $m_0$:  $m_0=1/2(\delta_0+\delta_1)$. Note that $m_\epsilon$ for $\epsilon>0$ are stationary and ergodic, but $m_0$ is stationary but not ergodic. And here lies the source of the trouble. For the hypothesis of homogeneity, consider the pair of distributions $(m_\epsilon,m_\epsilon)$. When $\epsilon\to0$, the limit is $(m_0,m_0)$, which is the mixture 
$$1/4((\delta_0,\delta_0)+(\delta_1,\delta_1)+(\delta_0,\delta_1)+(\delta_1,\delta_0)).$$
Call this mixture $W_0$.
Note that, under the distribution $(m_0,m_0)$, with probability 1/2 we observe two  different sequences, one is all 0s and the other all 1s. In other words, under the ergodic decomposition $W_0$ of $(m_0,m_0)$, with probability 1/2 we observe two different distributions, either $(\delta_0,\delta_1)$ or $(\delta_1,\delta_0)$, so that $W_0(H_1)=1/2$. Nonetheless, the distribution $(m_0,m_0)$ itself is of course in $H_0$. 

Let us now demonstrate that there is no asymmetrically consistent test for $H_0$ against its complement to the set of Markov chain distributions.
As in the i.i.d.\ case,  the sets of distributions $\rho$ satisfying $\rho(B)\le\alpha$
are closed  for any fixed $B\in A^*$ and $\alpha\in[0,1]$, and in particular for $B:=\{z_{1..n}:\phi(z_{1..n})=0\}$. Thus, for any test $\phi$ and any given sample size $n$, if the sets $\{(X_{1..n},Y_{1..n})\in (A^n)^2:\phi_\alpha(X_{1..n}=1)\}$ on which the test says $H_1$ (makes Type I error) have probability at most $\alpha$ with respect to every $(m_\epsilon,m_\epsilon)$ for $\epsilon>0$, then they also have probability at most $\alpha$ under the distribution $(m_0,m_0)$. The latter distribution, however, is concentrated on four pairs of $n$-tuples $(000..0,111..1), (111..1,111..1)(111..1,000..0)(000..0,000..0)$. This means that for $\alpha<1/4$ the test must say that the distributions are the same when presented with at least one of the 
pairs of samples $(000..0,111..1)$ or $(111..1,000..0)$. Since this happens for every $n$, we conclude that any such test is inconsistent: its Type II error does not go to 0: it is at least 1/4 for infinitely many $n$ under at least one of  the distributions $(\delta_0,\delta_1)$ or $(\delta_1,\delta_0)$.

Thus, we have shown that there is no asymmetrically consistent test for homogeneity for (stationary ergodic) Markov chains. The reason for this is that, while the set $H_0$ is closed, it is not closed under ergodic decompositions. Specifically, there exists a distribution $\rho\in  H_0$ (namely, $\rho=(m_0,m_0)$), whose ergodic decomposition $W_0$ is such that $W_0(H_1)=1/2$. Ergodic decompositions of the limit points of $H_0$  is what we need to take care of in the general case of stationary ergodic distributions.

As the last word about homogeneity testing for Markov chains, let us note that, unlike for stationary ergodic distributions, there exists an {\em asymptotically} consistent test for this hypothesis for this set of processes. Indeed, ergodic Markov chains mix exponentially fast (e.g., \cite{hernandez:03}), which is enough to construct a test, considering sets around $H_0$ that shrink sufficiently slowly. An example of such an algorithm for the more general problem of clustering  distributions with mixing can be found in \cite{Khaleghi:15clust}.

\subsection{Stationary ergodic processes}
Finally, let us pass to the general case of stationary ergodic distributions. The topology of the distributional distance that we work with is a direct generalisation of the Euclidean topology of the parameter spaces on the Bernoulli and Markov distributions that we considered. In fact, the topology induced by the distributional distance on these parameter spaces is exactly the same.

As we have seen in the Markov case, the main problem is with the limit points of $H_0$ and their ergodic decompositions. More generally, while the set $\S$ of stationary processes is closed in the topology of the distributional distance, the set $\mathcal E$ of stationary ergodic distributions is not (its closure is $\S$). 
This parallels the situation with Markov chains: the closure of the set of stationary ergodic Markov chains is the set of all stationary Markov chains.

For the case of asymmetric consistency for stationary ergodic processes, the pinnacle result presented in this chapter is the following criterion: there exists an asymmetrically consistent test of $H_0\subset\mathcal E$ against its complement $H_1=\mathcal E\backslash H_0$ if and only if $H_0$ has probability 1 with respect to the ergodic decomposition of every process in the closure of ${H_0}$. This is a corollary of the more general result presented in this chapter for the case when $H_0$ is not necessarily the complement of $H_1$; however the condition only becomes ``if and only if'' in the case of the complement.  This result can be directly applied to the hypothesis of homogeneity testing to show that there is no asymmetrically consistent test against its complement: indeed, the proof that $H_0$ is not closed under taking ergodic decompositions is by the Markov example of the previous subsection. %

\section{Topological characterizations}\label{s:main}
 In this section we formulate our criteria for the existence of consistent tests, and 
 give constructions of the tests which are consistent if and only if consistent tests exist.

 These constructions are not exactly algorithms, since one can hardly talk about algorithms
 whose input is an arbitrary set of distributions. However, the tests specify what should
 be estimated and how the decision should be made. Therefore, we provide  procedures that work if anything works at all;
 turning them into  efficient algorithms for specific problems is an interesting direction for further research.

The tests presented below are based on { empirical estimates of  the distributional distance}.
We shall first generalize this to measure the distance between a sample and a set of distributions (a hypothesis), rather than a single distribution or another samples.

 For a sample $X_{1..n}\in A^n$ and a hypothesis $H\subset\mathcal E$ define 
$$
\hat d(X_{1..n},H)=\inf_{\rho\in H} \hat d(X_{1..n},\rho).
$$

For $H\subset\S$, denote $\cl{H}$ the closure of $H$ with respect to the topology of $d$.

\subsection{Uniform testing}
For $H_0,H_1\subset\S$, the {\em uniform test} $\phi_{H_0,H_1}$ is constructed   as follows. For each $n\in\N$ let
\begin{equation}
 \phi_{H_0,H_1}(X_{1..n})\\:=\left\{\begin{array}{ll} 0& \text{ if }  \hat d(X_{1..n},\cl{H_0}\cap\mathcal E)<\hat d(X_{1..n},\cl{H_1}\cap\mathcal E), \\ 1 & \text{ otherwise.} \end{array}\right.
\end{equation}
Since the set $\S$ is a complete separable  metric space, it is easy to see that 
the function $\phi_{H_0,H_1}(X_{1..n})$ is measurable provided $\cl{H_0}$ is measurable.

\begin{theorem}[uniform testing]\label{th:uni}
 Let $H_0, H_1$ be measurable subsets of $\mathcal E$.   If  %
$W_\rho(H_i)=1$ for every $\rho\in \cl{H_i}$
then 
the test $\phi_{H_0,H_1}$ is uniformly consistent.
Conversely, if there exists a uniformly consistent test for $H_0$ against $H_1$ then 
$W_\rho(H_{1-i})=0$ for any $\rho\in \cl{H_i}$.
\end{theorem}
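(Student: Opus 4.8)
The plan is to prove the two implications separately, using throughout that for a finite alphabet the space $\S$ is compact in the distributional distance $d$, that the error region $\{z_{1..n}:\phi(z_{1..n})=1\}$ is a finite-time event $B_n\in\mathcal B_n$, and that for any finite-time event $B$ the map $\mu\mapsto\mu(B)$ is continuous on $\S$ in $d$ (since $d$ dominates the discrepancy of every fixed-dimensional marginal). I will also use the ergodic decomposition $\rho(B)=\int \mu(B)\,dW_\rho(\mu)$ and Lemma~\ref{th:dd}.

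For the necessity direction, assume a uniformly consistent test $\phi$ exists (not necessarily $\phi_{H_0,H_1}$). Fix $\alpha>0$ and let $n_\alpha$ be the corresponding sample size, so that $\mu(B_n)<\alpha$ for every $\mu\in H_0$ and $\mu(B_n)>1-\alpha$ for every $\mu\in H_1$, for all $n\ge n_\alpha$. Since $\mu\mapsto\mu(B_n)$ is continuous and $H_0\subseteq\cl{H_0}$, the first bound extends to the closure: $\rho(B_n)\le\alpha$ for every $\rho\in\cl{H_0}$. Now take $\rho\in\cl{H_0}$ and suppose $W_\rho(H_1)=c>0$. Integrating the decomposition and using $\mu(B_n)>1-\alpha$ on $H_1$,
\[
\alpha \ge \rho(B_n) = \int \mu(B_n)\,dW_\rho(\mu) \ge \int_{H_1}\mu(B_n)\,dW_\rho(\mu) \ge (1-\alpha)\,W_\rho(H_1) = (1-\alpha)c .
\]
Hence $c\le\alpha/(1-\alpha)$, and letting $\alpha\to0$ forces $c=0$; thus $W_\rho(H_1)=0$ for every $\rho\in\cl{H_0}$, and symmetrically $W_\rho(H_0)=0$ for every $\rho\in\cl{H_1}$, which is exactly the stated conclusion $W_\rho(H_{1-i})=0$ for $\rho\in\cl{H_i}$.

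For the sufficiency direction, assume $W_\rho(H_i)=1$ for every $\rho\in\cl{H_i}$, $i\in\{0,1\}$ (and, as is implicit in speaking of a test of $H_0$ against $H_1$, that $H_0\cap H_1=\varnothing$). First I would show $\cl{H_0}\cap\cl{H_1}=\varnothing$: a common point $\rho$ would satisfy $W_\rho(H_0)=W_\rho(H_1)=1$, hence $W_\rho(H_0\cap H_1)=1$, contradicting disjointness. By compactness of $\S$ the two disjoint closed sets are then separated, $\Delta:=d(\cl{H_0},\cl{H_1})>0$. A triangle-inequality computation with $\hat d$ (using $d(\sigma,\nu)\le \hat d(X_{1..n},\sigma)+\hat d(X_{1..n},\nu)$) shows that the test can answer $1$ only when $\hat d(X_{1..n},\cl{H_0}\cap\mathcal E)\ge\Delta/2$; so it remains to bound $\rho(\hat d(X_{1..n},\cl{H_0}\cap\mathcal E)\ge\Delta/2)$ uniformly over $\rho\in H_0$, and symmetrically for $H_1$. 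Pointwise this probability vanishes: for ergodic $\rho\in H_0$ the realised trajectory is typical for $\rho$, so by Lemma~\ref{th:dd} $\hat d(X_{1..n},\rho)\to0$ a.s.\ with $\rho\in\cl{H_0}\cap\mathcal E$, whence $\hat d(X_{1..n},\cl{H_0}\cap\mathcal E)\to0$ a.s.

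The main obstacle is upgrading this pointwise decay to the \emph{uniform} decay demanded by uniform consistency, and doing so without any guarantee on the speed of convergence of frequencies. The danger is genuine: over \emph{all} ergodic processes $\rho(\hat d(X_{1..n},\rho)\ge\Delta/2)$ does not tend to $0$ uniformly, since a process that stays ``stuck'' like the near-non-ergodic chains $m_\epsilon$ of Section~\ref{s:one} keeps its empirical frequencies far from its own marginals for a long time. What rescues uniformity is that the test measures distance to the whole set $\cl{H_0}\cap\mathcal E$, not to $\rho$: whatever ergodic behaviour a sample exhibits is the behaviour of some ergodic component, and the hypothesis $W_\rho(H_i)=1$ on the closure forces every such component --- including the ``stuck'' Dirac-type limits --- to lie in $\cl{H_i}\cap\mathcal E$. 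I would make this quantitative by truncating the distributional distance to its first $J$ events (so the tail is below $\Delta/4$), projecting $\cl{H_0}$ to the compact set $\pi_J(\cl{H_0})\subseteq[0,1]^J$ of marginal vectors, and arguing by contradiction: if uniformity failed there would be $n_j\to\infty$ and $\rho_j\in H_0$ whose empirical marginal vector is $\ge\Delta/4$-far from $\pi_J(\cl{H_0})$ with probability bounded below; passing to a convergent subsequence $\rho_j\to\rho^*\in\cl{H_0}$ and to an offending ergodic component, one extracts a limiting ergodic behaviour that stays bounded away from $\cl{H_0}$, yet, being realised by samples of processes in $\cl{H_0}$, must be (a limit of) ergodic components of points of $\cl{H_0}$, which the closure hypothesis places in $\cl{H_0}\cap\mathcal E$ --- a contradiction. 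Turning this extraction into a genuine limit (handling that convergence in $d$ controls only fixed-dimensional marginals while the offending events grow in dimension) is the delicate step, and is where compactness of $\S$ together with the decomposition-closure of the hypotheses must be used with care.
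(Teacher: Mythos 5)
Your necessity argument is correct and is essentially the paper's: extend the Type~I bound from $H_0$ to $\cl{H_0}$ by continuity of $\mu\mapsto\mu(B_n)$ for the finite-dimensional event $B_n$, then integrate against the ergodic decomposition; your version (letting $\alpha\to0$) is if anything slightly cleaner than the paper's fixed-$\alpha$ contradiction. Your reduction in the sufficiency direction is also sound: $\Delta:=d(\cl{H_0},\cl{H_1})>0$, and the observation that $\phi=1$ forces $\hat d(X_{1..n},\cl{H_0}\cap\mathcal E)\ge\Delta/2$ (via $\hat d(X,\sigma)+\hat d(X,\nu)\ge d(\sigma,\nu)$) correctly reduces the problem to showing $\sup_{\rho\in H_0}\rho\bigl(\hat d(X_{1..n},\cl{H_0}\cap\mathcal E)\ge\Delta/2\bigr)\to0$.

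However, at precisely the step you flag as ``delicate,'' there is a genuine gap, and your sketched workaround does not close it. The obstruction is that the offending events $T_{n_j}:=\{\hat d(X_{1..n_j},\cl{H_0}\cap\mathcal E)\ge\Delta/2\}$ live in $A^{n_j}$ with $n_j\to\infty$, so continuity of $\mu\mapsto\mu(T)$ along the convergent subsequence $\rho_j\to\rho_*$ cannot be invoked for them; truncating the distance to its first $J$ events does not help, because the empirical frequencies $\nu(X_{1..n_j},B_i)$ still depend on the entire length-$n_j$ sample. Your proposed extraction of ``offending ergodic components'' also has nothing to bite on: the $\rho_j$ lie in $H_0\subseteq\mathcal E$ and are already ergodic, so the failure of uniformity is a matter of \emph{rates}, not of hidden components. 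What the paper supplies at exactly this point is Lemma~\ref{th:count} (smooth probabilities of deviation): by stationarity, if $\rho_j(\hat d(X_{1..n_j},\cl{H_0})\ge\Delta/2)>\alpha/2$ for large $n_j$, then already for a \emph{fixed} smaller length $n_k$ one has $\rho_j(\hat d(X_{1..n_k},\cl{H_0})\ge\Delta/4)>\Delta\alpha/16$. This converts the growing-dimensional events into a single fixed finite-dimensional event, to which continuity (Lemma~\ref{th:cont}) applies, yielding $\rho_*(\hat d(X_{1..n_k},\cl{H_0})\ge\Delta/4)>\Delta\alpha/16$ for infinitely many $k$ and hence $\rho_*(\limsup_n\hat d(X_{1..n},\cl{H_0})\ge\Delta/4)>0$; this contradicts the a.s.\ convergence $\hat d(X_{1..n},\cl{H_0})\to0$ under $\rho_*$, which follows from Lemma~\ref{th:dd} together with the ergodic decomposition and the hypothesis $W_{\rho_*}(H_0)=1$. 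Without a quantitative statement of this ``long sample far $\Rightarrow$ short sample far'' type, your compactness argument cannot be completed.
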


The proof is deferred to section~\ref{s:proofsht}.

The following corollary, which is easy to see already for i.i.d.\ distributions (see Section~\ref{s:one}), for the general case is an immediate consequence of the second statement of the theorem above. 

\begin{corollary}\label{th:nouni}
 There is no uniformly consistent test for any  hypothesis $H_0$ against its complement $\mathcal E\setminus H_0$ unless one of these hypotheses is empty.
\end{corollary}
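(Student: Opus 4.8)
The plan is to derive a contradiction from the converse (second) statement of Theorem~\ref{th:uni}. Assume both $H_0$ and $H_1:=\mathcal E\setminus H_0$ are nonempty (if one of them is empty a constant test is trivially uniformly consistent), and suppose for contradiction that a uniformly consistent test for $H_0$ against $H_1$ exists. Then Theorem~\ref{th:uni} yields $W_\rho(H_1)=0$ for every $\rho\in\cl{H_0}$, and symmetrically $W_\rho(H_0)=0$ for every $\rho\in\cl{H_1}$. The strategy is to exhibit a single stationary process $\rho$ lying in \emph{both} $\cl{H_0}$ and $\cl{H_1}$: for such a $\rho$ both equalities are forced to hold, while its ergodic decomposition will require $W_\rho(H_0)+W_\rho(H_1)=1$, which is the sought contradiction.

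First I would establish that $\cl{H_0}\cap\cl{H_1}\neq\emptyset$. Since $H_0$ and $H_1$ partition $\mathcal E$, we have $\cl{H_0}\cup\cl{H_1}=\cl{H_0\cup H_1}=\cl{\mathcal E}=\S$, using that a finite union of closures is the closure of the union and that $\mathcal E$ is dense in $\S$ (its closure is $\S$, as noted earlier in the chapter). Both $\cl{H_0}$ and $\cl{H_1}$ are closed and, as $H_0,H_1$ are nonempty, both are nonempty. Hence if they were disjoint they would form a separation of $\S$ into two nonempty clopen pieces. To rule this out I would argue that $\S$ is connected: mixtures of stationary processes are stationary, so $\S$ is convex, and the segment map $t\mapsto t\mu+(1-t)\nu$ is $1$-Lipschitz from $[0,1]$ into $(\S,d)$ since $d\big(t\mu+(1-t)\nu,\,s\mu+(1-s)\nu\big)=|t-s|\,d(\mu,\nu)\le|t-s|$. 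Thus $\S$ is path-connected, a connected space admits no separation, and therefore $\cl{H_0}\cap\cl{H_1}\neq\emptyset$.

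Next I would pick $\rho\in\cl{H_0}\cap\cl{H_1}$ and invoke the ergodic decomposition theorem: as $\rho\in\S$, there is a measure $W_\rho$ on $(\mathcal P,\mathcal B_{\mathcal P})$ with $W_\rho(\mathcal E)=1$. Because $H_0$ and $H_1$ are disjoint with $H_0\cup H_1=\mathcal E$, countable additivity gives $W_\rho(H_0)+W_\rho(H_1)=W_\rho(\mathcal E)=1$. But the two consequences of Theorem~\ref{th:uni} recorded above apply to this \emph{same} $\rho$, giving simultaneously $W_\rho(H_1)=0$ (from $\rho\in\cl{H_0}$) and $W_\rho(H_0)=0$ (from $\rho\in\cl{H_1}$), hence $W_\rho(H_0)+W_\rho(H_1)=0$. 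This contradicts $W_\rho(H_0)+W_\rho(H_1)=1$ and completes the proof.

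The step I expect to be the main obstacle is the verification that $\cl{H_0}\cap\cl{H_1}\neq\emptyset$, i.e.\ the connectedness of $\S$; once this is in hand, the rest is bookkeeping combining Theorem~\ref{th:uni} with the ergodic decomposition theorem. (One should also keep in mind the measurability hypothesis of Theorem~\ref{th:uni}: since $H_1$ is the complement of $H_0$ within the measurable set $\mathcal E$, measurability of $H_0$ suffices and is implicit in the test being well defined.)
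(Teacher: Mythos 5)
Your proof is correct: both applications of the converse part of Theorem~\ref{th:uni} are legitimate, the identity $\cl{H_0}\cup\cl{H_1}=\cl{\mathcal E}=\S$ is right, the Lipschitz computation $d(t\mu+(1-t)\nu,s\mu+(1-s)\nu)=|t-s|\,d(\mu,\nu)$ does establish path-connectedness of the convex set $\S$, and the final clash with $W_\rho(H_0)+W_\rho(H_1)=1$ is exactly the needed contradiction. The paper itself gives no written proof (it calls the corollary an immediate consequence of the second statement of Theorem~\ref{th:uni}), and the most direct route differs from yours in how the offending process $\rho$ is produced: rather than locating some $\rho\in\cl{H_0}\cap\cl{H_1}$ via connectedness, one simply picks $\rho_0\in H_0$, $\rho_1\in H_1$ and sets $\rho:=\tfrac12(\rho_0+\rho_1)$. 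This $\rho$ is stationary, hence lies in $\S=\cl{H_0}\cup\cl{H_1}$, so it belongs to at least one of the two closures, and its ergodic decomposition $W_\rho=\tfrac12(\delta_{\rho_0}+\delta_{\rho_1})$ assigns mass $1/2$ to each hypothesis, so a single application of the theorem already yields the contradiction. That version buys brevity and avoids any topological input beyond density of $\mathcal E$ in $\S$; your version buys a slightly more structural statement ($\cl{H_0}\cap\cl{H_1}\neq\emptyset$ for any partition of $\mathcal E$ into nonempty pieces), which is the same connectedness phenomenon the paper invokes in the i.i.d.\ warm-up of Section~\ref{s:one}. Your closing remark on measurability is apt: the theorem is stated for measurable $H_0,H_1$, and measurability of $H_0$ indeed suffices since $H_1$ is its complement in $\mathcal E$.
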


\subsection{Asymmetric testing}

Construct the {\em asymmetric test}  $\psi_{H_0,H_1}^\alpha, \alpha\in(0,1)$ as follows.
For each  $n\in\N$, $\delta>0$ and $H\subset\mathcal E$ define the neighbourhood  $b^n_\delta(H)$  of $n$-tuples around $H$ as 
  $$b^n_\delta(H):=\{X\in A^n: \hat d(X,H)\le\delta\}.$$
Moreover, let 
$$
\gamma_n(H,\theta):= \inf \{\delta: \inf_{\rho\in H} \rho(b^n_\delta(H))\ge\theta\}
$$ be the smallest radius of a neighbourhood around $H$ that has probability not less than $\theta$ with respect to any process in $H$, and let $C^n(H,\theta):=b^n_{\gamma_n(H,\theta)}(H)$ be the neighbourhood of this radius.
Define  
$$
\psi^\alpha_{H_0,H_1}(X_{1..n}):=\left\{\begin{array}{ll} 0& \text{ if }  X_{1..n}\in C^n(\cl{H_0}\cap\mathcal E,1-\alpha), \\ 1& \text{ otherwise.} \end{array}\right.
$$
Again, it is easy to see that 
the function $\phi_{H_0,H_1}(X_{1..n})$ is measurable, since the set $\S$ is separable. %

\begin{theorem}\label{th:asym}
 Let $H_0, H_1$ be measurable subsets of $\mathcal E$. 
If $W_\rho(H_0)=1$ for every $\rho\in \cl{H_0}$ then the test $\psi^\alpha_{H_0, H_1}$ is  asymmetrically consistent. 
Conversely, if there is an asymmetrically consistent test for $H_0$ against $H_1$ then $W_\rho(H_1)=0$ for any $\rho\in \cl{H_0}$.
\end{theorem}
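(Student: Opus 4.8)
Let $H_0, H_1$ be measurable subsets of $\mathcal E$. If $W_\rho(H_0)=1$ for every $\rho\in\cl{H_0}$ then the test $\psi^\alpha_{H_0,H_1}$ is asymmetrically consistent. Conversely, if there is an asymmetrically consistent test for $H_0$ against $H_1$ then $W_\rho(H_1)=0$ for any $\rho\in\cl{H_0}$.

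Let me think carefully about how to prove each direction.

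**The forward (sufficiency) direction.** We assume $W_\rho(H_0)=1$ for every $\rho\in\cl{H_0}$, and we must verify the two defining properties of asymmetric consistency for the test $\psi^\alpha_{H_0,H_1}$.

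Property (i), the Type I error bound: for $\rho\in H_0$, we need $\rho\{X_{1..n}:\psi^\alpha=1\}\le\alpha$ for every $n$. The test says $0$ exactly when $X_{1..n}\in C^n(\cl{H_0}\cap\mathcal E,1-\alpha)$. By definition, $C^n(\cl{H_0}\cap\mathcal E,1-\alpha)$ is a neighborhood whose radius $\gamma_n$ is chosen so that $\inf_{\rho\in\cl{H_0}\cap\mathcal E}\rho(C^n)\ge 1-\alpha$. So for any $\rho\in H_0\subseteq\cl{H_0}\cap\mathcal E$ we have $\rho(C^n)\ge 1-\alpha$, hence $\rho(\text{say }1)=1-\rho(C^n)\le\alpha$. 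This part is immediate from the construction.

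Property (ii), Type II error made finitely often: for $\rho\in H_1$ we need $\rho(\lim_n\psi^\alpha(X_{1..n})=1)=1$. The key is to show that the neighborhood radius $\gamma_n(\cl{H_0}\cap\mathcal E,1-\alpha)$ shrinks to $0$ as $n\to\infty$, so that eventually $X_{1..n}\notin C^n$ almost surely under any $\rho\notin\cl{H_0}$. This is where the ergodic-decomposition hypothesis $W_\rho(H_0)=1$ on $\cl{H_0}$ enters, together with compactness of $\S$ (finite alphabet). The plan is: first, by the ergodic theorem (Lemma~\ref{th:dd}, statement (ii)), under any ergodic $\rho$ the empirical distance $\hat d(X_{1..n},\mu)\to d(\rho,\mu)$ a.s.\ for fixed $\mu$; more usefully, $\hat d(X_{1..n},\rho)\to 0$ a.s. Second, I would argue the radius $\gamma_n\to 0$: suppose not, so $\gamma_n\ge\gamma_0>0$ along a subsequence. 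By compactness of $\cl{H_0}\cap\S$ and the way $\gamma_n$ is defined as an infimum over $\cl{H_0}\cap\mathcal E$, there is a sequence $\rho_n\in\cl{H_0}\cap\mathcal E$ forcing a large radius; a limit point $\rho_\ast\in\cl{H_0}$ must then, via its ergodic decomposition $W_{\rho_\ast}$ (which is supported on $H_0$ by hypothesis), concentrate mass on ergodic components for which the empirical distance to $\cl{H_0}$ stays bounded away from $0$ — contradicting that each such component is itself in $\cl{H_0}\cap\mathcal E$ and hence attracts its own samples. Making this precise is the crux; the hypothesis $W_{\rho_\ast}(H_0)=1$ guarantees that the ``bad'' non-ergodic limit points do not leak probability outside $H_0$, so the concentration radius can be taken uniformly small for large $n$. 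Once $\gamma_n\to 0$ is established, for $\rho\in H_1$ we have $\hat d(X_{1..n},\cl{H_0})\to d(\rho,\cl{H_0})>0$ a.s.\ (since $\rho\notin\cl{H_0}$, using that $d$ is a metric and $H_1\subseteq\mathcal E$), so eventually $\hat d(X_{1..n},\cl{H_0})>\gamma_n$ and the test outputs $1$.

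**The converse (necessity) direction.** Assume an asymmetrically consistent test $\psi^\alpha$ exists; we must show $W_\rho(H_1)=0$ for every $\rho\in\cl{H_0}$. The argument I would use is exactly the ergodic-decomposition averaging seen in the Markov example of Section~\ref{s:one}, run against a general limit point. Fix $\rho\in\cl{H_0}$ and fix $\alpha<1$. The set $S_n:=\{X_{1..n}:\psi^\alpha(X_{1..n})=1\}$ satisfies $\mu(S_n)\le\alpha$ for every $\mu\in H_0$ and every $n$ (Type I). Since the map $\mu\mapsto\mu(S_n)$ is continuous in the distributional distance for fixed cylinder event $S_n\in\B_n$, and $\rho\in\cl{H_0}$ is a limit of elements of $H_0$, we get $\rho(S_n)\le\alpha$ for every $n$. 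Now apply the ergodic decomposition of $\rho$: for every $n$,
\begin{equation}\label{eq:neceq}
\int \mu(S_n)\,dW_\rho(\mu)=\rho(S_n)\le\alpha.
\end{equation}
Here I would invoke Fatou's lemma. By Type II consistency, for every $\mu\in H_1$ we have $\mu(\lim_n \psi^\alpha=1)=1$, i.e.\ $\limsup_n\mathbf{1}_{S_n}$ has full $\mu$-measure, so $\liminf_n\mu(S_n)\ge\mu(\liminf \text{``}1\text{''})$; more carefully, $\mu(S_n)\to 1$ for each $\mu\in H_1$. Thus $\liminf_n\int\mu(S_n)\,dW_\rho(\mu)\ge\int\liminf_n\mu(S_n)\,dW_\rho(\mu)\ge\int_{H_1}1\,dW_\rho(\mu)=W_\rho(H_1)$ by Fatou, while the left side is $\le\alpha$ by~\eqref{eq:neceq}. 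Hence $W_\rho(H_1)\le\alpha$ for every $\alpha\in(0,1)$, giving $W_\rho(H_1)=0$, as required.

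**Where the difficulty lies.** The routine parts are the Type I bound (built into the construction), the continuity of $\mu\mapsto\mu(S_n)$ for fixed cylinder sets, and the Fatou step in the converse. The genuinely hard step is establishing $\gamma_n\to 0$ in the sufficiency direction. The subtlety is uniformity: $\gamma_n$ is an infimum of concentration radii over the \emph{whole} set $\cl{H_0}\cap\mathcal E$, whose closure $\cl{H_0}$ contains non-ergodic processes, and one must rule out that samples from ergodic $\rho$ near a non-ergodic limit point require persistently large neighborhoods. This is exactly the phenomenon that killed homogeneity testing in the Markov example; the hypothesis $W_\rho(H_0)=1$ for all $\rho\in\cl{H_0}$ is precisely what prevents it, and the proof must exploit compactness of $\S$ together with this hypothesis to extract a uniform shrinking rate. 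I expect the technical work to consist of a careful compactness-plus-ergodic-decomposition argument showing that any limiting obstruction would force an ergodic component outside $H_0$ in the decomposition of some $\rho\in\cl{H_0}$, contradicting $W_\rho(H_0)=1$.
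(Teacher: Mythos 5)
Your converse direction is correct and follows essentially the paper's own route: continuity of $\mu\mapsto\mu(S_n)$ for the fixed cylinder set $S_n=\{\psi^\alpha=1\}$ transfers the Type~I bound from $H_0$ to its closure, the ergodic decomposition writes $\rho(S_n)=\int\mu(S_n)\,dW_\rho(\mu)$, and Fatou together with $\mu(S_n)\to1$ for $\mu\in H_1$ gives $W_\rho(H_1)\le\alpha$ for every $\alpha$. (The paper fixes $\alpha=\delta/2$ with $\delta=W_\rho(H_1)$ and derives a contradiction instead of letting $\alpha\to0$; the two are interchangeable.) The Type~I half of the sufficiency direction is likewise immediate from the construction of $C^n$, as you say.

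The gap is the one you yourself flag as ``the crux,'' and it is a genuine missing idea rather than a routine verification. Your plan is: extract $\rho_m\in\cl{H_0}\cap\mathcal E$ witnessing a non-shrinking radius at sample lengths $n_m\to\infty$, pass to a limit $\rho_*\in\cl{H_0}$ by compactness, and contradict $W_{\rho_*}(H_0)=1$. The obstruction is that convergence $\rho_m\to\rho_*$ in the distributional distance only controls probabilities of \emph{fixed} finite-dimensional events, whereas the deviation event $\{\hat d(X_{1..n_m},\cl{H_0})\ge\Delta/2\}$ lives at a different sample length for each $m$; you cannot pass the inequality $\rho_m(\cdot)>\alpha$ to $\rho_*$ directly. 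The paper closes this with Lemma~\ref{th:count} (``smooth probabilities of deviation''), which uses stationarity to show that if a sample of length $n_m$ is likely to be $\epsilon$-far from a set $H$ in empirical distributional distance, then a sample of the \emph{fixed shorter} length $n_k$ is also far with probability bounded below (at the cost of controlled losses in $\epsilon$ and in the probability). Only after converting all the witnesses to a common fixed length $n_k$ can one invoke continuity (Lemma~\ref{th:cont}) to conclude $\rho_*(\hat d(X_{1..n_k},\cl{H_0})\ge\Delta/4)>0$ for infinitely many $k$, hence $\rho_*(\limsup_n\hat d(X_{1..n},\cl{H_0})>\Delta/4)>0$, which contradicts the a.s.\ convergence $\hat d(X_{1..n},\cl{H_0})\to0$ under $\rho_*$ (obtained from Lemma~\ref{th:dd} applied to the ergodic components, using $W_{\rho_*}(H_0)=1$). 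Without this smoothing device, or some equivalent uniform-in-$n$ transfer, the compactness argument does not close; supplying it is the substantive content of the proof. (A minor further remark: the paper does not actually prove $\gamma_n\to0$; it only shows that for each fixed $\xi\in H_1$ the acceptance region $C^n$ eventually misses the $\Delta/2$-ball around $\xi$, which suffices and is obtained by the same mechanism.)
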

For the case when $H_1$ is the complement of $H_0$ the necessary and sufficient conditions of Theorem~\ref{th:asym} coincide and
give the following criterion.

\begin{corollary}\label{th:asymc}
 Let $H_0\subset\mathcal E$ be measurable and let $H_1= \mathcal E\backslash H_0$. The following  statements are equivalent:
\begin{itemize}
 \item[(i)] There exists an asymmetrically consistent test for $H_0$ against $H_1$.
 \item[(ii)]\ The test $\psi^\alpha_{H_0, H_1}$ is  asymmetrically consistent. 
 \item[(iii)]\ \ The set $H_1$ has probability 0 with respect to the ergodic decomposition of every $\rho$ in the closure of $H_0$: 
              $W_\rho(H_1)=0$ for each $\rho\in\cl{H_0}$.
\end{itemize}
\end{corollary}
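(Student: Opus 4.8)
The plan is to obtain Corollary~\ref{th:asymc} as an immediate consequence of Theorem~\ref{th:asym}, the only additional ingredient being the observation that, when $H_1=\mathcal E\setminus H_0$, the \emph{sufficient} condition and the \emph{necessary} condition appearing in that theorem become one and the same. I would organize the argument as the cycle of implications (iii)$\Rightarrow$(ii)$\Rightarrow$(i)$\Rightarrow$(iii).

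The pivotal preliminary step is to show that for every $\rho\in\cl{H_0}$,
\begin{equation*}
 W_\rho(H_0)=1 \iff W_\rho(H_1)=0.
\end{equation*}
This rests on two facts established earlier. First, since $\S$ is a closed subset of $\P$ and $H_0\subseteq\mathcal E\subseteq\S$, we have $\cl{H_0}\subseteq\S$, so each $\rho\in\cl{H_0}$ is a stationary process and hence admits an ergodic decomposition $W_\rho$ with $W_\rho(\mathcal E)=1$ by the ergodic decomposition theorem. Second, because $H_1=\mathcal E\setminus H_0$, the sets $H_0$ and $H_1$ are disjoint and their union is $\mathcal E$; measurability of $H_0$ guarantees measurability of $H_1$, so $W_\rho(H_1)$ is well defined, and additivity of $W_\rho$ gives $W_\rho(H_0)+W_\rho(H_1)=W_\rho(\mathcal E)=1$. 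The claimed equivalence follows at once.

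With this in hand the three implications are routine. For (iii)$\Rightarrow$(ii): condition (iii) says $W_\rho(H_1)=0$ for every $\rho\in\cl{H_0}$, which by the preliminary equivalence is the same as $W_\rho(H_0)=1$ for every $\rho\in\cl{H_0}$; this is exactly the hypothesis of the sufficiency half of Theorem~\ref{th:asym}, and so the test $\psi^\alpha_{H_0,H_1}$ is asymmetrically consistent. The implication (ii)$\Rightarrow$(i) is immediate, since $\psi^\alpha_{H_0,H_1}$ is itself a test and its consistency witnesses the existence of an asymmetrically consistent test. Finally, (i)$\Rightarrow$(iii) is precisely the necessity half of Theorem~\ref{th:asym}: the existence of an asymmetrically consistent test for $H_0$ against $H_1$ forces $W_\rho(H_1)=0$ for every $\rho\in\cl{H_0}$, which is (iii).

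Because Theorem~\ref{th:asym} does the substantive work, no genuine obstacle remains; the only point requiring care is that the support property $W_\rho(\mathcal E)=1$ and the measurability of $H_1$ be invoked at \emph{limit} points $\rho\in\cl{H_0}$, not merely at points of $H_0$. This is legitimate precisely because $\cl{H_0}\subseteq\S$ and the ergodic decomposition theorem applies to every stationary process, which is what makes the two one-sided conditions of Theorem~\ref{th:asym} coincide and thereby promotes the pair of implications into a full characterization.
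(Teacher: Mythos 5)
Your proposal is correct and matches the paper's approach: the paper derives the corollary by observing that when $H_1=\mathcal E\setminus H_0$ the sufficient condition $W_\rho(H_0)=1$ and the necessary condition $W_\rho(H_1)=0$ of Theorem~\ref{th:asym} coincide, which is exactly the equivalence you establish via $W_\rho(\mathcal E)=1$ and additivity. Your write-up merely makes explicit the details the paper leaves implicit.
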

\begin{svgraybox}
 There exists an asymmetrically ($\alpha$-level) consistent test for a hypothesis $H_0\subset\mathcal E$ against its complement $\mathcal E\setminus H_0$ if and only if $H_0$ is closed and closed under taking ergodic decompositions, in the sense that $W_\rho(H_0)=1$ for every $\rho$ in the closure of ${H_0}$.
\end{svgraybox}

\section{Proofs}\label{s:proofsht}
In the proofs, we  often omit the subscript $H_0,H_1$ from $\psi^\alpha_{H_0,H_1}$ when it can cause no confusion.

The proofs  use the following lemmas.

\begin{lemma}[smooth probabilities of deviation]\label{th:count}
 Let $m>2k>2$, $\rho\in\S$, $H\subset\S$, and $\epsilon>0$. Then
\begin{equation}\label{eq:count1} 
\rho(\hat d(X_{1..m},H)\ge\epsilon)  
  \le 2\epsilon'^{-1} \rho(\hat d(X_{1..k},H)\ge \epsilon'),
\end{equation}
where $\epsilon':=\epsilon - \frac{2k}{m-k+1} - t_k$ with $t_k$ being the sum of  all the weights of tuples longer than $k$ in the definition of $d$: $t_k:=\sum_{i:|B_i|>k}w_i$. Further,
\begin{equation}\label{eq:count2} 
\rho(\hat d(X_{1..m}, H)\le\epsilon)
  \le 2\rho\left(\hat d(X_{1..k},H)\le \frac{m}{m-k+1}2\epsilon + \frac{4k}{m-k+1}\right).
\end{equation}
\end{lemma}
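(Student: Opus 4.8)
The plan is to compare the length-$m$ statistic with the length-$k$ one by slicing the sample $X_{1..m}$ into its $m-k+1$ overlapping blocks $X_{t..t+k-1}$, $t=1,\dots,m-k+1$, and exploiting that for a stationary $\rho$ each such block has \emph{exactly} the law of $X_{1..k}$. First I would dispose of the infinite tail: writing $\hat d_k(\x,\mu):=\sum_{i:|B_i|\le k}w_i|\nu(\x,B_i)-\mu(B_i)|$ for the truncated distance, one has $\hat d_k\le\hat d\le \hat d_k+t_k$, so any statement about $\hat d$ becomes, up to the additive loss $t_k$, a statement about $\hat d_k$, which involves only words of length at most $k$. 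This is precisely the role of the term $t_k$ inside $\epsilon'$.

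The combinatorial engine is a block-averaging estimate. For a word $B$ with $|B|=j\le k$, each interior occurrence of $B$ in $X_{1..m}$ is counted in exactly $k-j+1$ of the blocks, while only the $O(k-j)$ occurrences near the two ends are undercounted; summing the deficit gives $|\nu(X_{1..m},B)-\tfrac1{m-k+1}\sum_t\nu(X_{t..t+k-1},B)|\le \tfrac{2k}{m-k+1}$. Weighting by $w_i$ and using the triangle inequality turns this, for every fixed reference $\mu$, into $\hat d_k(X_{1..m},\mu)\le \tfrac{2k}{m-k+1}+\tfrac1{m-k+1}\sum_t\hat d_k(X_{t..t+k-1},\mu)$, which is the origin of the $\tfrac{2k}{m-k+1}$ correction in $\epsilon'$ (and, in rescaled form, of the $\tfrac{m}{m-k+1}2\epsilon+\tfrac{4k}{m-k+1}$ radius in the second inequality).

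For \eqref{eq:count1} I would fix a near-minimising reference $\mu^\star\in H$ for the long sample and use the display above together with $\hat d\le\hat d_k+t_k$ to conclude that, on the event $\{\hat d(X_{1..m},H)\ge\epsilon\}$, the block average $\tfrac1{m-k+1}\sum_t\hat d(X_{t..t+k-1},\mu^\star)$ is at least $\epsilon'=\epsilon-\tfrac{2k}{m-k+1}-t_k$. This average is a nonnegative random variable whose expectation, by stationarity, equals $\mathbb E_\rho\,\hat d(X_{1..k},\mu^\star)$, so Markov's inequality (using also the trivial bound $\hat d\le 1$ to pass from the mean of the average to a single-block exceedance probability) produces the factor $\epsilon'^{-1}$, the leading $2$ absorbing the gap between $m-k+1$ and the number of full blocks. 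Inequality \eqref{eq:count2} is the mirror image: on $\{\hat d(X_{1..m},H)\le\epsilon\}$ the same block comparison, read in the reverse direction and rescaled by $\tfrac{m}{m-k+1}$, forces a majority of blocks to lie within $\tfrac{m}{m-k+1}2\epsilon+\tfrac{4k}{m-k+1}$ of the reference, and a complementary counting/Markov step against the single-block law yields the stated bound.

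The step I expect to be genuinely delicate is the passage from a distance to the \emph{set} $H$ back to the single-block probabilities, because $\hat d(\cdot,H)=\inf_{\mu\in H}\hat d(\cdot,\mu)$ only satisfies $\sum_t\inf_{\mu}\le\inf_{\mu}\sum_t$, so the block-averaging inequality cannot be applied termwise after taking infima, and the non-convexity of $H$ means the long-sample empirical measure may sit near a mixture of elements of $H$ that is itself far from $H$. My approach is therefore to keep a single frozen reference $\mu^\star$ throughout the stationarity-and-Markov computation, and to replace $\hat d(\cdot,\mu^\star)$ by $\hat d(\cdot,H)$ only at the final step, in the direction compatible with the particular event being bounded (exceedance for \eqref{eq:count1}, non-exceedance for \eqref{eq:count2}). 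Making these two substitutions line up with the correct events while preserving the exact constants $2$ and the precise radii and $\epsilon'$ is the part of the argument that will require the most care.
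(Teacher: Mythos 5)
Your skeleton is the paper's: the same overlapping-block count with the $2k$ boundary correction, the same $t_k$ tail truncation, and the same stationarity-plus-counting step that converts ``many bad blocks of length $k$'' into a probability statement about a single block $X_{1..k}$. You have also correctly put your finger on the one genuinely delicate point, namely that $\hat d(\cdot,H)=\inf_{\mu\in H}\hat d(\cdot,\mu)$ does not commute with the block sum. The paper handles this step rather brusquely: it proves the block-averaging inequality for every fixed $\mu$ and then writes the same inequality with $\hat d(\cdot,H)$ on both sides, after which all counting and stationarity arguments are applied to events of the form $\{\hat d(X_{t..t+k-1},H)\ge\epsilon'/2\}$, which mention no particular~$\mu$.

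The difficulty is that your proposed resolution closes this gap only for \eqref{eq:count2}, not for \eqref{eq:count1}. For the second inequality the plan is sound: on the event $\{\hat d(X_{1..m},H)\le\epsilon\}$ you pick a near-minimiser $\mu^\star$ for the long sample, deduce that at least half the blocks satisfy $\hat d(X_{t..t+k-1},\mu^\star)\le r$ with $r$ the radius in \eqref{eq:count2}, and since $\hat d(\cdot,H)\le\hat d(\cdot,\mu^\star)$ this yields $\hat d(X_{t..t+k-1},H)\le r$; stationarity is then applied to an event that no longer involves $\mu^\star$. For \eqref{eq:count1}, however, the final substitution must run the other way: you obtain many blocks with $\hat d(X_{t..t+k-1},\mu^\star)\ge\epsilon'$, but the target event is $\{\hat d(X_{t..t+k-1},H)\ge\epsilon'\}$, and a block that is far from the single reference $\mu^\star$ (the best match for the \emph{long} sample) may perfectly well be close to some other element of $H$; because $\hat d(\cdot,H)\le\hat d(\cdot,\mu^\star)$, the implication you need simply is not there. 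The intermediate step is also unavailable as you state it: since $\mu^\star$ is a function of the whole sample $X_{1..m}$, stationarity does not give $\E\,\hat d(X_{t..t+k-1},\mu^\star)=\E\,\hat d(X_{1..k},\mu^\star)$ --- that identity holds only for a deterministic reference, whereas for a fixed $\mu$ the near-minimising property for the long sample is lost. So for \eqref{eq:count1} the frozen-reference device does not do the job; one must either work, as the paper does, directly with the set-distance version of the block inequality (which is precisely where the difficulty you identified is concentrated), or find a genuinely different argument for the exceedance direction.
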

The meaning of this lemma is as follows.  For any word $X_{1..m}$, if it is far away from (or close to)
a given distribution $\mu$ (in the empirical distributional distance), then some of its shorter subwords $X_{i..i+k}$ are far from (close to) $\mu$ too.
 In other words, for a stationary distribution $\mu$,  it cannot happen that  a small 
 sample is likely to be close to $\mu$, but a larger sample is likely to be far.

\begin{proof}
Let $B$ be  a tuple such that $|B|<k$ and $X_{1..m}\in A^m$ be any sample of size $m>1$. 
The number of occurrences of $B$ in $X$ can be bounded by the number of occurrences of $B$ in subwords of $X$ of length $k$ as follows:
\begin{multline*}%
\#(X_{1..m},B) \le   \frac{1}{k-|B|+1}\sum_{i=1}^{m-k+1}\#(X_{i..i+k-1},B) +  2k \\ =\sum_{i=1}^{m-k+1}\nu(X_{i..i+k-1},B)+  2k.
\end{multline*}
Indeed, summing over $i=1..m-k$ the number of occurrences of $B$ in all $X_{i..i+k-1}$ we count each
occurrence of $B$ exactly $k-|B|+1$ times, except for those that occur in the first and last $k$ symbols.
Dividing by $m-|B|+1$, and using the definition (\ref{eq:freq}), %
 we obtain
\begin{equation}\label{eq:ff}
\nu(X_{1..m},B)  \le \frac{1}{m-|B|+1}\left(\sum_{i=1}^{m-k+1}\nu(X_{i..i+k-1},B)| + 2k\right).
\end{equation}
Summing over all $B$, for any $\mu$, we get
\begin{equation}\label{eq:fracs}
\hat d(X_{1..m},\mu)  \le \frac{1}{m-k+1} \sum_{i=1}^{m-k+1}\hat d(X_{i..i+n-1},\mu) + \frac{2k}{m-k+1}+t_k,
\end{equation}
where in the right-hand side $t_k$ corresponds to all the summands in the left-hand side for which $|B|>k$, where for the rest of the summands we used $|B|\le k$.
Since this holds for any $\mu$, %
we conclude that
\begin{multline}\label{eq:ne}
\hat d(X_{1..m}, H)  \le \frac{1}{m-k+1} \left(\sum_{i=1}^{m-k+1}\hat d(X_{i..i+k-1}, H)\right)  + \frac{2k}{m-k+1}+t_k.
\end{multline}
Note that the $\hat d(X_{i..i+k-1}, H)\in[0,1]$. Therefore, for the average in the r.h.s.\ of~\eqref{eq:ne} to be 
larger than $\epsilon'$, at least $(\epsilon'/2) (m-k+1)$ summands have to be larger than $\epsilon'/2$.

Using stationarity, we can conclude
$$
\rho\left(\hat d(X_{1..k},H)\ge\epsilon'\right)\ge (\epsilon'/2) \rho\left(\hat d(X_{1..m}, H)\ge\epsilon\right),
$$
proving~(\ref{eq:count1}). The second statement can be proven similarly; indeed, analogously to~(\ref{eq:ff}) we have
 \begin{multline*}
\nu(X_{1..m},B)  \ge \frac{1}{m-|B|+1}\sum_{i=1}^{m-k+1}\nu(X_{i..i+k-1},B)-  \frac{2k}{m-|B|+1} \\ \ge \frac{1}{m-k+1}\left(\frac{m-k+1}{m}\sum_{i=1}^{m-k+1}\nu(X_{i..i+k-1},B)\right) -  \frac{2k}{m},
\end{multline*} where we have used $|B|\ge 1$. Summing over different $B$,  we obtain (similar to~(\ref{eq:fracs})),
 \begin{equation}\label{eq:ne2}
\hat d(X_{1..m},\mu)  \ge \frac{1}{m-k+1} \sum_{i=1}^{m-k+1}\frac{m-k+1}{m}\hat d_k(X_{i..i+n-1},\mu) - \frac{2k}{m}
\end{equation}(since the frequencies are non-negative, there is no $t_n$ term here).
For the average in~\eqref{eq:ne2} to be smaller than $\epsilon$, at least half of the summands must be smaller than $2\epsilon$.
Using stationarity of $\rho$, this implies~(\ref{eq:count2}).
\end{proof}

\begin{lemma}\label{th:cont} Let $\rho_k\in\S$, $k\in\N$ be a sequence of processes that converges to a process $\rho_*$.
Then, for any $T\in A^*$ and $\epsilon>0$ if $\rho_k(T)>\epsilon$ for infinitely many indices $k$, then
$\rho_*(T)\ge\epsilon$
\end{lemma}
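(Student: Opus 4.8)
The plan is to reduce this to the elementary fact that convergence in the distributional distance $d$ forces convergence of the probability of each fixed finite-time event. First I would recall that $\rho_k\to\rho_*$ means $d(\rho_k,\rho_*)\to0$, and that, in the finite-alphabet setting of this chapter, the distance takes the form~\eqref{eq:ddisd}, in which the probability of every word $B\in A^j$ enters with the fixed positive weight $w_j$. In particular, writing $j:=|T|$ for the length of the word $T$, the quantity $|\rho_k(T)-\rho_*(T)|$ appears in the sum defining $d(\rho_k,\rho_*)$ multiplied by $w_j>0$.

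The key step is then the bound
\[
 w_j\,|\rho_k(T)-\rho_*(T)|\le d(\rho_k,\rho_*),
\]
which holds because all summands in~\eqref{eq:ddisd} are nonnegative. Since $w_j$ is a fixed positive constant and the right-hand side tends to $0$, this yields $\rho_k(T)\to\rho_*(T)$ as $k\to\infty$; that is, the probability of the cylinder $\{X_{1..j}=T\}$ is a continuous function of the process in the topology of $d$.

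Finally I would invoke the hypothesis. By assumption there is an infinite set of indices $k$ along which $\rho_k(T)>\epsilon$. Along this subsequence the limit is still $\rho_*(T)$, since the entire sequence $\rho_k(T)$ converges to $\rho_*(T)$. A convergent sequence all of whose terms (along the subsequence) exceed $\epsilon$ has limit at least $\epsilon$, whence $\rho_*(T)\ge\epsilon$, as claimed.

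I do not anticipate a genuine obstacle here: the only point requiring care is the passage from convergence in $d$ to termwise convergence of cylinder probabilities, and this is immediate from the positivity of the weights together with the fact that, for finite alphabets, every word $T\in A^*$ appears explicitly among the generating events of~\eqref{eq:ddisd}. In the general formulation~\eqref{eq:ddis} one would instead rely on the generating sets $B_i$ spanning $\B_\infty$ and on the continuity of cylinder probabilities with respect to $d$, but in the present setting no such extra argument is needed.
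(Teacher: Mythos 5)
Your proof is correct and follows the same route as the paper: the paper's own proof simply invokes the continuity of $\rho\mapsto\rho(T)$ in the topology of $d$, which is exactly the fact you establish via the bound $w_{|T|}\,|\rho_k(T)-\rho_*(T)|\le d(\rho_k,\rho_*)$ before passing to the limit along the subsequence. The extra detail you supply (positivity of the weights, the finite-alphabet form~\eqref{eq:ddisd}) is just an explicit justification of the continuity claim the paper takes for granted.
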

\begin{proof}
 The statement follows from the fact that  $\rho(T)$ is continuous as a function of~$\rho$. 
\end{proof}

\smallskip

\begin{proof}[of Theorem~\ref{th:asym}.]
To establish the first statement of Theorem~\ref{th:asym}, we have to show that the family of tests $\psi^\alpha$ is  consistent.
By construction, for any $\rho\in\cl{H_0}\cap\mathcal E$ we have $\rho(\psi^\alpha(X_{1..n})=1)\le\alpha$. 

To prove the consistency of $\psi$, it remains to show that 
$$
\xi(\lim_{n\to\infty}\psi^\alpha(X_{1..n})=1)
$$  for any $\xi\in H_1$ and $\alpha>0$.
To do this, fix any $\xi\in H_1$ and let 
$$
\Delta:=d(\xi,\cl{H_0}):=\inf_{\rho\in \cl{H_0}\cap\mathcal E} d(\xi,\rho).
$$ Since $\xi\notin\cl{H_0}$, we have $\Delta>0$. Suppose that there exists an $\alpha>0$, such that, for infinitely many $n$, some samples from the $\Delta/2$-neighbourhood of $n$-samples
around $\xi$ are sorted as $H_0$ by $\psi$, that is, $C^n(\cl{H_0}\cap\mathcal E, 1-\alpha)\cap b_{\Delta/2}^n(\xi)\ne\emptyset$. Then
for these $n$ we have $\gamma_n(\cl{H_0}\cap\mathcal E,1-\alpha)\ge\Delta/2$. 

This means that there exists an increasing sequence $n_m,m\in\N$, and a 
sequence $\rho_m\in \cl{H_0}$, $m\in\N$, such that %
  $$\rho_{m}(\hat d(X_{1..n_{m}},\cl{H_0}\cap\mathcal E)> \Delta/2)>\alpha.$$
Using Lemma~\ref{th:count}, (\ref{eq:count1}) (with $\rho=\rho_{m}$, $m=n_m$, $k=n_k$, and $H=\cl{H_0}$),  and taking  $k$  large enough to have $t_{n_k}<\Delta/4$,
 for every $m$  large enough to have $\frac{2n_k}{n_m-n_k+1}<\Delta/4$, we obtain 
\begin{multline}\label{eq:down-}
8\Delta^{-1}\rho_{m}\left(\hat d(X_{1..n_k},\cl{H_0})  \ge\Delta/4\right) \\
 \ge  \rho_{m}\left(\hat d(X_{1..n_m},\cl{H_0})\ge\Delta/2\right) > \alpha. 
\end{multline}
Thus, 
\begin{equation}\label{eq:new}
 \rho_{m}(b^{n_k}_{\Delta/4}(\cl{H_0}\cap\mathcal E))<1-\alpha\Delta/8.
\end{equation}

Since the set $\cl{H_0}$ is compact (as a closed subset of a compact set $\S$),  we may assume (passing to 
a subsequence, if necessary) that  $\rho_m$  converges
to a certain $\rho_*\in\cl{H_0}$.
Since~\eqref{eq:new} this holds for infinitely many $m$, 
using Lemma~\ref{th:cont} (with $T=b^{n_k}_{\Delta/4}(\cl{H_0}\cap\mathcal E)$) we conclude that
 $$\rho_*(b^{n_k}_{\Delta/4}(\cl{H_0}\cap\mathcal E))\le1-\Delta\alpha/8.$$
Since the latter inequality holds for infinitely many indices $k$ we also have 
$$
\rho_*(\limsup_{n\rightarrow\infty}\hat d(X_{1..n},\cl{H_0}\cap\mathcal E)>\Delta/4)>0.
$$
However,  we must have $\rho_*(\lim_{n\rightarrow\infty}\hat d(X_{1..n},\cl{H_0}\cap\mathcal E)=0)=1$
for every $\rho_*\in\cl{H_0}$: indeed, for $\rho_*\in\cl{H_0}\cap\mathcal E$ it follows from Lemma~\ref{th:dd}, and for $\rho_*\in\cl{H_0}\backslash\mathcal E$ 
from  Lemma~\ref{th:dd}, ergodic decomposition and the conditions of the theorem ($W_\rho(H_0)=1$ for $\rho\in\cl{H_0}$).

This contradiction shows that for every $\alpha$ there are not more than finitely many $n$ for which $C^n(\cl{H_0}\cap\mathcal E, 1-\alpha)\cap b_{\Delta/2}^n(\xi)\ne\emptyset$. 
To finish the proof  of the first statement,  it remains to note that, as follows from Lemma~\ref{th:dd}, 
\begin{equation*}
\xi\{X_1,X_2,\dots.:X_{1..n}\in b_{\Delta/2}^n(\xi)\text{ from some $n$ on}\}\ge \xi\left(\lim_{n\rightarrow\infty}\hat d(X_{1..n},\xi)=0\right) =1.
\end{equation*}

To establish the second statement of Theorem~\ref{th:asym}
 we assume that there exists a
 consistent test $\phi$ for $H_0$ against $H_1$, and we will show that
 $W_\rho(H_1)=0$ for every $\rho\in \cl{H_0}$. %
Take $\rho\in \cl{H_0}$ and suppose that  
\begin{equation}\label{eq:ash1}
W_\rho(H_1)=\delta>0. 
\end{equation}
We have 
$$
 \limsup_{n\to\infty} \int_{H_1} d W_\rho(\mu) \mu(\psi^{\delta/2}_n=0)  \le  \int_{H_1} d W_\rho(\mu)\limsup_{n\to\infty}  \mu(\psi^{\delta/2}_n=0)=0,
$$
where the inequality follows from Fatou's lemma (the functions under integral are all bounded by 1), and the equality from the consistency of $\psi$. 
Thus, from some $n$ on 
we will have $\int_{H_1} d W_\rho \mu(\psi^{\delta/2}_n=0) < 1/4$. Taking into account~\eqref{eq:ash1}, we conclude  $\rho(\psi^{\delta/2}_n=0)<1-3\delta/4$. 
For any set $T\in A^n$ the function $\mu(T)$ is continuous as a function of $T$. In particular, it holds for the
set $T:=\{X_{1..n}:\psi_n^{\delta/2}(X_{1..n})=0\}$. Therefore,
since $\rho\in\cl{H_0}$, for any $n$ large enough we can find a $\rho'\in H_0$ such that $\rho'(\psi^{\delta/2}_n=0)<1-3\delta/4$,
 which contradicts
the consistency of $\psi$. Thus, $W_\rho(H_{1})= 0$, and Theorem~\ref{th:asym} is proven.
\end{proof}
\smallskip
\begin{proof}[of Theorem~\ref{th:uni}.]
To prove the first statement of the theorem,
we will show that the test $\phi_{H_0,H_1}$ is a uniformly consistent test for $\cl{H_0}\cap\mathcal E$ against $\cl{H_1}\cap\mathcal E$ (and hence for $H_0$ against $H_1$),
under  the conditions of the theorem. %
Suppose that, on the contrary,  for some $\alpha>0$ for every $n'\in\N$ there is a process $\rho\in \cl{H_0}$ such that 
$\rho(\phi(X_{1..n})=1)>\alpha$ for some $n>n'$. 
Define 
$$
\Delta:=d(\cl{H_0},\cl{H_1}):=\inf_{\rho_0\in \cl{H_0}\cap\mathcal E, \rho_1\in \cl{H_1}\cap\mathcal E} d(\rho_0,\rho_1),
$$ which is positive since $\cl{H_0}$ and $\cl{H_1}$ are closed and disjoint.
We have
\begin{multline}\label{eq:union}
\alpha<\rho(\phi(X_{1..n})=1)\\ \le   \rho(\hat d(X_{1..n},H_0)\ge\Delta/2\ or \ \hat d(X_{1..n},H_1)<\Delta/2)\\ \le \rho(\hat d(X_{1..n},H_0)\ge\Delta/2) + \rho(\hat d(X_{1..n},H_1)<\Delta/2).
\end{multline} 
This implies that either 
$$\rho(\hat d(X_{1..n},\cl{H_0})\ge\Delta/2)>\alpha/2$$
 or
 $$\rho(\hat d(X_{1..n},\cl{H_1})<\Delta/2)>\alpha/2,$$
so that, by assumption, at least one of these inequalities holds for infinitely many $n\in\N$ for some sequence  $\rho_n\in H_0$.
Suppose that it is the first one, that is, there is an increasing sequence $n_i$, $i\in\N$ and a sequence $\rho_i\in\cl{H_0}$, $i\in\N$ 
such that 
\begin{equation}\label{eq:da}
\rho_i(\hat d(X_{1..n_i},\cl{H_0})\ge\Delta/2)>\alpha/2 \text{ for all }i\in\N. 
\end{equation}
The set $\S$ is compact, hence so is its closed subset $\cl{H_0}$. Therefore, the sequence $\rho_i$, $i\in\N$ must
contain a subsequence that converges to a certain process $\rho_*\in\cl{H_0}$. Passing to a subsequence if necessary, we may assume
that this convergent subsequence is the sequence $\rho_i$, $i\in\N$ itself.

Using Lemma~\ref{th:count}, (\ref{eq:count1}) (with $\rho=\rho_{n_m}$, $m=n_m$, $k=n_k$, and $H=\cl{H_0}$),  and taking  $k$  large enough to have $t_{n_k}<\Delta/4$,
 for every $m$  large enough to have $\frac{2n_k}{n_m-n_k+1}<\Delta/4$, we obtain 
\begin{multline}\label{eq:down}
8\Delta^{-1}\rho_{n_m}\left(\hat d(X_{1..n_k},\cl{H_0})  \ge\Delta/4\right) \\ \ge  \rho_{n_m}\left(\hat d(X_{1..n_m},\cl{H_0})\ge\Delta/2\right) > \alpha/2. 
\end{multline}
That is, we have shown that for any large enough index $n_k$ the inequality 
$\rho_{n_m}(\hat d(X_{1..n_k},\cl{H_0})\ge\Delta/4)> \Delta\alpha/16$ holds for 
infinitely many indices $n_m$. 
From this and Lemma~\ref{th:cont} with  $T=T_k:=\{X:\hat d(X_{1..n_k},\cl{H_0})\ge\Delta/4\}$  we conclude 
that $\rho_*(T_k)>\Delta\alpha/16$.
The latter holds for infinitely many $k$; that is,  $\rho_*(\hat d(X_{1..n_k},\cl{H_0})\ge\Delta/4)>\Delta\alpha/16$ infinitely often.
Therefore, 
$$
\rho_*(\limsup_{n\rightarrow\infty} d(X_{1..n},\cl{H_0})\ge\Delta/4)>0.
$$ However, we must have 
$$
\rho_*(\lim_{n\rightarrow\infty} d(X_{1..n},\cl{H_0})=0)=1
$$
for every $\rho_*\in\cl{H_0}$: indeed, for $\rho_*\in\cl{H_0}\cap\mathcal E$ it follows from Lemma~\ref{th:dd}, and for $\rho_*\in\cl{H_0}\backslash\mathcal E$ 
from  Lemma~\ref{th:dd}, ergodic decomposition and the conditions of the theorem. 

Thus, we have arrived at a contradiction that shows that $\rho_n(\hat d(X_{1..n},\cl{H_0})>\Delta/2)>\alpha/2$ cannot hold for 
infinitely many $n\in\N$ for any sequence of  $\rho_n\in\cl{H_0}$. Analogously, we can show that $\rho_n(\hat d(X_{1..n},\cl{H_1})<\Delta/2)>\alpha/2$
cannot hold for infinitely many $n\in\N$ for any sequence of $\rho_n\in\cl{H_0}$. Indeed,  using Lemma~\ref{th:count}, equation~(\ref{eq:count2}), we can show that
  $\rho_{n_m}(\hat d(X_{1..n_m},\cl{H_1})\le\Delta/2) > \alpha/2$ for a large enough $n_m$
implies $\rho_{n_m}(\hat d(X_{1..n_k},\cl{H_1})\le 3\Delta/4)> \alpha/4$ for a smaller $n_k$.
Therefore, if we assume that $\rho_n(\hat d(X_{1..n},\cl{H_1})<\Delta/2)>\alpha/4$ for infinitely many $n\in\N$ for some sequence of $\rho_n\in\cl{H_0}$, then 
we will also find a $\rho_*$ for which $\rho_*(\hat d(X_{1..n},\cl{H_1})\le 3\Delta/4)> \alpha/4$ for infinitely
many $n$, which, using Lemma~\ref{th:dd} and ergodic decomposition, can be shown to contradict the fact that $\rho_*(\lim_{n\rightarrow\infty}  d(X_{1..n},\cl{H_1})\ge\Delta)=1$.
 
Thus, returning to~(\ref{eq:union}), we have shown that from some $n$ on there is no $\rho\in \cl{H_0}$ for which $\rho(\phi=1)>\alpha$
holds true. The statement for $\rho\in\cl{H_1}$ can be proven analogously, thereby finishing the proof of the first statement.

To prove the second statement of the theorem, %
we assume that there exists a uniformly consistent test $\phi$ for $H_0$ against $H_1$, and we will show that
$W_\rho(H_{1-i})=0$ for every $\rho\in \cl{H_i}$.
Indeed, let $\rho\in \cl{H_0}$, that is, suppose that there is a sequence $\xi_i\in H_0, i\in\N$ such that $\xi_i\to\rho$. 
Assume  $W_\rho(H_1)=\delta>0$ and take $\alpha:=\delta/2$.
Since the test $\phi$ is uniformly consistent, there is an $N\in\N$ such  that  for every $n>N$ we have  
\begin{multline*}
 \rho(\phi(X_{1..n}=0))\le \int_{H_1} \phi(X_{1..n}=0)dW_\rho + \int_{\mathcal E\backslash H_1} \phi(X_{1..n}=0) dW_\rho \\ \le \delta\alpha + 1-\delta \le 1-\delta/2.
\end{multline*}
Recall that, for $T\in A^*$,  $\mu(T)$ is a continuous function in $\mu$. In particular, this holds for the set $T=\{X\in A^n: \phi(X)=0\}$, 
for any given $n\in\N$. Therefore, for every  $n>N$ and for every  $i$ large enough,  $\rho_i(\phi(X_{1..n})=0)<1-\delta/2$ implies also $\xi_i(\phi(X_{1..n})=0)<1-\delta/2$
which contradicts $\xi_i\in H_0$.
This contradiction shows $W_\rho(H_1)=0$ for every $\rho\in \cl{H_0}$. The case $\rho\in \cl{H_1}$ is analogous.
\end{proof}

\section{Examples}\label{s:ex}
Theorems~\ref{th:asym} and~\ref{th:uni}   can be used to check whether a consistent test exists 
for such  problems as identity, independence, estimating the order  of a (Hidden) Markov model, bounding
entropy, bounding distance, uniformity, monotonicity, etc. Some of these examples are considered in this section.

\subsection{Simple hypotheses, identity or goodness-of-fit testing} First of all, it is obvious that sets that consist of just
one or finitely many stationary ergodic processes are closed and closed under ergodic decompositions. Thus, they meet the conditions of Theorem~\ref{th:uni}, and so, 
for any pair of disjoint sets of this type, there exists a uniformly consistent test. (In particular, there 
is a uniformly consistent test for $H_0=\{\rho_0\}$ against $H_1=\{\rho_1\}$ iff $\rho_0\ne\rho_1$.)

A more interesting case is identity testing, also known as goodness-of-fit: %
this problem  consists in testing whether a distribution 
generating the sample obeys a certain given law, versus it does not. Thus, let $\rho\in\mathcal E$, $H_0=\{\rho\}$ and $H_1=\mathcal E\backslash H_0$.
In such a case  there is an asymmetrically consistent test for   $H_0$ against $H_1$:
indeed, the conditions of Theorem~\ref{th:asymc}  are easily verified.
It is worth noting that  (asymmetric) identity testing is a classical problem of mathematical statistics, with solutions (e.g. based on Pearson's $\chi^2$ statistic) for i.i.d.\  data (e.g. \cite{Lehmann:86}), and Markov chains \cite{Billingsley:61}. 
  For stationary ergodic processes, \cite{BRyabko:06b} gives an asymmetrically
 consistent test when $H_0$ has a finite and bounded memory, and \cite{Ryabko:103s} for the general case of stationary ergodic 
 real-valued processes.
 
As far as uniform testing is concerned, it is, first of all, clear that, just like in the i.i.d.\ case (cf. Section~\ref{s:one}),  for any $\rho_0$ 
 there is no uniformly consistent test for identity. Indeed, as we have seen (Corollary~\ref{th:nouni}),  for any non-empty $H_0$ there is no uniformly 
consistent test for $H_0$ against $\mathcal E\backslash H_0$ provided neither hypothesis is non-empty.
One might suggest at this point that, as in the i.i.d.\ case, a uniformly consistent test exists if we restrict $H_1$ to those processes
that are sufficiently far from $\rho_0$, for example, by introducing some $\epsilon$-padding around $H_0$. However, this is not the case. We can prove an even stronger negative result.
\begin{proposition}\label{th:no}
Let  $\rho,\nu\in\mathcal E$, $\rho\ne\nu$ and let  $\epsilon>0$.
There is no uniformly consistent test for $H_0=\{\rho\}$ against $H_1=\{\nu'\in\mathcal E: d(\nu',\nu)\le\epsilon\}$. 
\end{proposition}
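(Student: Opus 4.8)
The plan is to derive the impossibility from the converse (necessity) part of Theorem~\ref{th:uni}. That statement says that if a uniformly consistent test for $H_0$ against $H_1$ exists, then (taking $i=1$) one must have $W_{\rho^*}(H_0)=0$ for every $\rho^*\in\cl{H_1}$. Hence it suffices to exhibit a single stationary process $\rho^*\in\cl{H_1}$ whose ergodic decomposition charges $H_0=\{\rho\}$, i.e.\ $W_{\rho^*}(\{\rho\})>0$; this contradicts the necessary condition and rules out any uniformly consistent test. Note that $d(\rho,\nu)>0$ since $\rho\ne\nu$ and $d$ is a metric.

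The candidate is the proper mixture $\rho^*:=\beta\rho+(1-\beta)\nu$ with a suitably small weight $\beta\in(0,1]$. Being a convex combination of the two distinct ergodic measures $\rho$ and $\nu$, the process $\rho^*$ is stationary with ergodic decomposition $W_{\rho^*}=\beta\,\delta_\rho+(1-\beta)\,\delta_\nu$, so $W_{\rho^*}(\{\rho\})=\beta>0$, exactly as required. Moreover, by linearity of the distributional distance in one argument under mixing, $d(\rho^*,\nu)=\sum_i w_i|\beta\rho(B_i)+(1-\beta)\nu(B_i)-\nu(B_i)|=\beta\,d(\rho,\nu)$. If $d(\rho,\nu)\le\epsilon$ I simply take $\beta=1$, so that $\rho^*=\rho\in H_1\subseteq\cl{H_1}$ and there is nothing more to do; otherwise I fix $\beta\in(0,1)$ with $\beta\,d(\rho,\nu)<\epsilon$, which keeps $\rho^*$ strictly inside the $\epsilon$-ball around $\nu$.

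The one real step is to show that this non-ergodic $\rho^*$ actually lies in $\cl{H_1}$, since $H_1$ contains only ergodic processes. For this I would exhibit stationary ergodic processes $P_j$ with $P_j\in H_1$ and $d(P_j,\rho^*)\to0$. The natural construction is a slowly-switching modulation between emulating $\rho$ and emulating $\nu$: an irreducible aperiodic two-state ``mode'' chain whose stationary distribution is $(\beta,1-\beta)$ and whose switching rate $t_j$ tends to $0$, emitting in each mode according to the corresponding process (equivalently, a stationary random coding of i.i.d.\ mode-labelled blocks of length $L_j\to\infty$). Each $P_j$ is stationary ergodic, and for every fixed cylinder $B_i$ a finite window lies entirely within a single mode-sojourn with probability tending to $1$, so $P_j(B_i)\to\beta\rho(B_i)+(1-\beta)\nu(B_i)=\rho^*(B_i)$; summing against the weights $w_i$ gives $d(P_j,\rho^*)\to0$, whence $d(P_j,\nu)\le d(P_j,\rho^*)+\beta\,d(\rho,\nu)<\epsilon$ for all large $j$, so $P_j\in H_1$ eventually. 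Consequently $\rho^*\in\cl{H_1}$ with $W_{\rho^*}(\{\rho\})=\beta>0$, and the converse part of Theorem~\ref{th:uni} forbids a uniformly consistent test.

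I expect the verification of this last construction to be the main obstacle: one must check that the modulated process is genuinely ergodic (the mode chain being irreducible and aperiodic, combined with the emission mechanism, so that no shift-invariant event of intermediate probability survives), and that its finite-dimensional distributions converge to those of the mixture as the switching rate vanishes, uniformly enough to control the infinite sum defining $d$ (here the tail $\sum_{i\ge J}w_i<\epsilon/2$ truncation, as in the proof of Lemma~\ref{th:dd}, reduces the task to finitely many cylinders). The remaining ingredients — the uniqueness of the ergodic decomposition of a finite mixture of distinct ergodic measures, and the identity $d(\rho^*,\nu)=\beta\,d(\rho,\nu)$ — are routine.
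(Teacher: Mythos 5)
Your proposal is correct and follows essentially the same route as the paper's proof: both invoke the necessity direction of Theorem~\ref{th:uni} and exhibit the stationary non-ergodic mixture of $\rho$ and $\nu$ (with a small weight on $\rho$) as a limit point of $H_1$, approximated by ergodic processes obtained from a two-state Markov ``mode'' chain with vanishing switching rate that alternates between emitting according to $\rho$ and according to $\nu$. The only cosmetic differences are that you fix the limit mixture first and then build the approximating sequence (the paper does the reverse, letting the switching parameter $p_m\to0$ and identifying the limit), and that you dispose of the degenerate case $d(\rho,\nu)\le\epsilon$ separately via $\beta=1$.
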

The following conclusion can be made from this proposition.
\begin{svgraybox}
While distributional distance
is well-suited for characterizing those hypotheses for which consistent tests exist, it is not 
suited for  formulating the actual hypotheses. 
\end{svgraybox}
\hskip-4mm\noindent Apparently, a stronger distance is needed for the latter.

\begin{proof}[of Proposition~\ref{th:no}]
Consider the process $(X_1,Y_1),(X_2,Y_2),\dots$ on pairs \\ $(X_i,Y_i)\in A^2$, such that
the distribution of $X_1,X_2,\dots$ is $\nu$, the distribution of $Y_1,Y_2,\dots$ is $\rho$ and 
the two components $X_i$ and $Y_i$ are independent; in other words, the distribution of $(X_i,Y_i)_{i\in\N}$ is $\nu\times\rho$. 
Consider also a two-state stationary ergodic  Markov chain $\mu$, with two states $1$ and $2$, whose transition
probabilities are $\left(\begin{array}{cc}
                          1-p & p \\ q & 1-q
                         \end{array} \right)$, where $0<p<q<1$. 
The limiting (and initial) probability of the state $1$ is $p/(p+q)$ and that of the state $2$ is $q/(p+q)$. 
Finally, the process $Z_1,Z_2,\dots$ is constructed as follows:
$Z_i=X_i$ if $\mu$ is in the state $a$ and $Z_i=Y_i$ otherwise (here it is assumed that the chain $\mu$ generates
a sequence of outcomes independently of $(X_i,Y_i)$). Clearly, for every $p,q$ satisfying $0<p<q<1$ the process $Z_1,Z_2,\dots$ is stationary ergodic.
Let $p_m:=1/(m+1)$, $q_m:=\delta p_m/(1-\delta)$ for all $m\in\N$, where $\delta$ is a parameter to be defined shortly. Denote $\zeta_m$ the distribution of the process $(Z_i)_{i\in\N}$ with parameters $p_m,q_m$. With these parameters, $\mu(1)=\delta$ independently of $m$ (i.e, the Markov chain  underlying $\zeta_m$ spends $\delta$ time in the first state). 
Find $\delta>0$  sufficiently small   so 
as to have  for all $m$ sufficiently large  $d(\nu,\zeta_m)<\epsilon$, as is always possible since $\lim_{\delta\to0}\zeta_m=\nu$ uniformly in $m$. 
Thus, $\zeta_m\in H_1$ for all $m\in\N$. However, $\lim_{m\to\infty}\zeta_m=\zeta_\infty$
where $\zeta_\infty$ is the stationary distribution with $W_{\zeta_\infty}(\rho)=\delta$ and $W_{\zeta_\infty}(\nu)=1-\delta$.
Therefore, $\zeta_\infty\in\cl{H_1}$ and $W_{\zeta_\infty}(H_0)>0$, so that by Theorem~\ref{th:uni} there is no uniformly consistent
test for $H_0$ against $H_1$.
\end{proof}

\subsection{Markov and  Hidden Markov processes: bounding the order} 
Let us next consider finite-state Markov and hidden Markov processes.
 
For any $k$, there is an asymmetrically consistent  test of the hypothesis $\mathcal M_k$= ``the process is Markov of order not greater than $k$'' against $\mathcal E\backslash \mathcal M_k$.
For any $k$, there is an asymmetrically consistent test of $\mathcal{HM}_k$=``the process is given by a Hidden Markov process with not more than $k$ states''
against $H_1=\mathcal E\backslash \mathcal{HM}_k$.
Indeed, in both cases ($k$-order Markov, Hidden Markov with not more than $k$ states), the hypothesis $H_0$
is a parametric family, with a compact set of parameters, and a continuous function mapping parameters 
to processes (that is, to the space $\mathcal S$). Since the space $\S$ of stationary processes is compact, Weierstrass theorem then implies that the image of such a compact
parameter set is closed (and compact). Moreover, in both cases $H_0$ is closed under taking ergodic decompositions.
Thus, by Theorem~\ref{th:asym}, there exists an asymmetrically consistent test.

The problem of estimating the order of a (hidden) Markov process based on  sampling  had been addressed in a number of works. In the contest 
of hypothesis testing, asymmetrically consistent tests for $\mathcal M_k$ against $\mathcal M^t$ with $t>k$ were given in \cite{Anderson:57}, see also \cite{Billingsley:61}.
The existence of non-uniformly consistent tests (a notion weaker than that of asymmetric consistency) for $\mathcal M_k$ against $\mathcal E\backslash\mathcal M_k$, 
and of $\mathcal{HM}_k$ against $\mathcal E\backslash \mathcal{HM}_k$,  was established  in \cite{Kieffer:93}.  Asymmetrically consistent
tests for $\mathcal M_k$ against $\mathcal E\backslash\mathcal M_k$ were obtained in \cite{BRyabko:06a}, while 
for the formulation above that includes the case of asymmetric testing for $\mathcal {HM}_k$ against $\mathcal E\backslash\mathcal {HM}_k$  is from \cite{Ryabko:121c}. 

Considering the set $\mathcal M_*:=\cup_{k\in\N}\mathcal M_k$ of all finite-memory processes, it is easy to see that there is no asymmetrically consistent test for this set against its complement: indeed, $\cl M_*=\S$, so by Corollary~\ref{th:asym} there is no test. There is also no asymptotically consistent test for this hypothesis, even though it is possible to construct an estimator of the order of  a Markov chain that tends to infinity if the process is not Markov; see \cite{Morvai:05} and references. 

\subsection{Smooth parametric families} From the discussion in  the  previous example we can see that the following generalization is valid.
Let $H_0\subset\mathcal S$ be a set of processes that is continuously parametrized by a compact set of parameters. If $H_0$
is closed under taking ergodic decompositions, then there is an asymmetrically consistent test for $H_0$ against $\mathcal E\backslash H_0$.
In particular, this strengthens the mentioned result of \cite{Kieffer:93}, since a stronger notion of consistency is used, as well 
as a more general class of parametric families is considered. 

Clearly, a similar statement can be derived for uniform testing: given two disjoint sets  $H_0$ and $H_1$ each of which 
is continuously parametrized by a compact set of parameters and is closed under taking ergodic decompositions, 
there exists a uniformly consistent test of $H_0$ against $H_1$.

\subsection{Homogeneity testing or process discrimination}
This problem consists in testing, given two samples $X_{1..n}^1$ and $X_{1..n}^2$, whether the distributions generating these samples are the same or different. We have considered this problem in details in Section~\ref{s:hom} for the case of asymptotic consistency and stationary ergodic distinctions (and $B$-processes), and in  Section~\ref{s:one} for the case of asymmetric and uniform consistency and smaller sets of distributions. 
The results can be summarized in the following table.   Here we omit uniform testing in view of Corollary~\ref{th:nouni}.
\begin{table}[h]
\caption[]{Existence of a consistent test for the hypothesis of homogeneity against its complement, for different notions of consistency and classes of processes}\label{t:hom}
\begin{tabular}{l|l|l|l}
  & I.i.d. & Markov & Stationary ergodic  \\\hline
Asymmetric consistency& Test exists\ \ & No test \ \  & No test   \\
Asymptotic consistency& Test exists\ \  & Test exists \ \  & No test (Theorem~\ref{th:discr})   
\end{tabular}
\end{table}

\subsection{Independence}\label{s:ind}
Again, we are given two samples, $X_{1..n}^1$ and $X_{1..n}^2$. %
The hypothesis of independence is that the first process  is  independent from the second:
$\rho(X^1_{1..t}\in T_1, X^2_{1..t}\in T_2)=\rho(X^1_{1..t}\in T_1)\rho(X^2_{1..t}\in T_2)$ for any $(T_1,T_2)\in A^n$ and any $n\in\N$.

Let $\mathcal I$ be the set of all stationary ergodic processes (on pairs) satisfying this property.

\begin{proposition}\label{th:notest}
 There is no asymmetrically consistent  test for  independence (for jointly stationary ergodic samples). 
\end{proposition}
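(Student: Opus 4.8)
The plan is to apply the necessity direction of the criterion for asymmetric testing. Since here $H_0=\mathcal I$ and $H_1=\mathcal E\setminus\mathcal I$ is exactly the complement of $\mathcal I$, Corollary~\ref{th:asymc} is in force: to rule out an asymmetrically consistent test it suffices to exhibit a \emph{single} process $\rho\in\cl{\mathcal I}$ whose ergodic decomposition charges the dependent processes, i.e.\ with $W_\rho(\mathcal E\setminus\mathcal I)>0$. The whole difficulty is thereby reduced to constructing one such $\rho$. As in the Markov example of Section~\ref{s:one}, the mechanism will be that a limit of ergodic processes fails to be ergodic and its ergodic components escape the hypothesis; the new twist is that the limit $\rho$ will itself be an independent (product) process, yet its ergodic components will turn out to be dependent.

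First I would fix the building block. Let $\mu$ be the period-two process on $A=\{0,1\}$, namely the uniform mixture of the two alternating sequences $0101\dots$ and $1010\dots$; this is stationary and ergodic, being the uniform measure on a single shift-orbit. I then set $\rho:=\mu\times\mu$, the joint law of two \emph{independent} copies $(X_i),(Y_i)$ each governed by $\mu$; as a product measure $\rho$ is independent in the sense required by $\mathcal I$. To place $\rho$ in $\cl{\mathcal I}$ I would approximate $\mu$ by the two-state Markov chains $\mu_\epsilon$ with self-loop probability $\epsilon$, switching probability $1-\epsilon$, and uniform initial law. Each $\mu_\epsilon$ is stationary ergodic and mixing (its second eigenvalue is $2\epsilon-1$, of modulus below $1$), and its finite-dimensional marginals converge to those of $\mu$ as $\epsilon\to0$, so $\mu_\epsilon\to\mu$ in distributional distance. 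The independent products $\mu_\epsilon\times\mu_\epsilon$ are then stationary ergodic (a product of two independent mixing chains is ergodic), lie in $\mathcal I$, and converge to $\rho$; hence $\rho\in\cl{\mathcal I}$.

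The crux is the ergodic decomposition of $\rho=\mu\times\mu$. Writing $e$ and $o$ for the two alternating sequences, $\rho$ is uniform on the four pair-sequences $(e,e),(e,o),(o,e),(o,o)$. The shift sends $(e,e)\mapsto(o,o)$ and $(e,o)\mapsto(o,e)$, so the support splits into two shift-orbits and
$$\rho=\tfrac12\,[\text{unif on }\{(e,e),(o,o)\}]+\tfrac12\,[\text{unif on }\{(e,o),(o,e)\}],$$
with each summand, being uniform on a single orbit, stationary ergodic. On the first component $X_t=Y_t$ for all $t$, and on the second $X_t=1-Y_t$ for all $t$; in particular on the first component $\rho(X_1=0,Y_1=0)=\tfrac12\neq\tfrac14=\rho(X_1=0)\rho(Y_1=0)$, so both ergodic components are dependent, i.e.\ belong to $\mathcal E\setminus\mathcal I$. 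Consequently $W_\rho(\mathcal E\setminus\mathcal I)=1$, and Corollary~\ref{th:asymc} (equivalently the necessity part of Theorem~\ref{th:asym}) yields that no asymmetrically consistent test for independence exists.

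I expect the main obstacle to be conceptual rather than computational: one must recognize that a product of two \emph{non-ergodic} stationary processes can have genuinely dependent ergodic components, a phenomenon resting essentially on the periodicity of $\mu$ (the product of two ergodic period-two processes is itself non-ergodic and splits along the synchronized/anti-synchronized orbits). The remaining care lies in verifying that the approximants $\mu_\epsilon\times\mu_\epsilon$ are genuinely ergodic --- which is precisely why aperiodicity, via the self-loops, is built into $\mu_\epsilon$ --- and that the two orbits identified above exhaust the ergodic components of $\rho$.
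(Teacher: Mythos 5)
Your proof is correct, and it reaches the conclusion by a genuinely different (and more elementary) construction than the one in the text. The paper's proof uses the translation process: an irrational rotation of the circle thresholded at $1/2$, whose independent self-product is stationary but not ergodic because the difference of the two hidden angles is shift-invariant; its ergodic components, indexed by that difference, are dependent pairs, and the approximating independent ergodic pairs are obtained by jittering the rotation angle with i.i.d.\ noise. You instead take the period-two process (in effect a rotation by $1/2$ on a two-point orbit), whose self-product splits into exactly two ergodic components --- the synchronized and anti-synchronized orbits --- both visibly dependent, and you approximate by products of aperiodic two-state Markov chains. Both arguments rest on the same mechanism: the product of two independent copies of an ergodic but not weakly mixing process need not be ergodic, and its ergodic components can be coupled; combined with the necessity direction of Corollary~\ref{th:asymc} this rules out the test. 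Your version buys simplicity --- the ergodic decomposition is a finite, explicit computation, the exhaustiveness of the two orbits follows from uniqueness of the decomposition, and ergodicity of $\mu_\epsilon\times\mu_\epsilon$ is standard Markov-chain theory --- while the paper's aperiodic, zero-entropy example additionally shows that the obstruction is not an artifact of periodicity. All the verifications you flag do go through, so the argument is complete as written.
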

\begin{proof}
The example is based on the so-called translation process, which is constructed as follows. 
Fix some irrational  $\alpha \in (0,1)$ and select $r_0 \in [0,1]$ uniformly at random. 
For each $i=1..n..$ let $r_i=(r_{i-1}+\alpha) \mod 1$ (that is, the previous element is shifted by $\alpha$ to the right, considering the [0,1] interval looped). 
The samples $X_i$ are obtained from $r_i$ by thresholding at $1/2$, 
i.e. $X_i:=\mathbb{I}\{r_i>0.5\}$ (here $r_i$ can be considered hidden states). This process is stationary and ergodic; besides, it has 0 entropy rate~\cite{Shields:98}, and this is not the last of its peculiarities. 

Take now two independent copies of this process to obtain a pair $(\x_1,\x_2)=(X_1^1,X_1^2\dots,X_n^1,X_n^2,\dots)$. %
 The resulting process on pairs, which we denote $\rho$, is stationary, but it is not ergodic. To see the latter, observe that the difference between the corresponding hidden states remains constant. 
In fact, each initial state $(r_1,r_2)$ corresponds to an ergodic component of our process on pairs. By the same argument, these ergodic components are not independent. Thus, we have taken two independent copies of a stationary ergodic process, and obtained a stationary process which is not ergodic and whose ergodic components are pairs of processes that are not independent!

To apply Corollary~\ref{th:asymc}, it remains to show that the process $\rho$ we constructed can be obtained as a limit of stationary ergodic processes on pairs. To see this, consider, for each $\epsilon$,  a process $\rho_\epsilon$, whose construction is identical to $\rho$ except that instead of shifting the hidden states by $\alpha$ we shift them by $\alpha+u_i^\epsilon$ where $u_i^\epsilon$ are i.i.d.\ uniformly random on $[-\epsilon,\epsilon]$. It is easy to see that $\lim_{\epsilon\to0} \rho_\epsilon=\rho$ in distributional distance, and all $\rho_\epsilon$ are stationary ergodic.  Thus, if $H_0$ is the set of all stationary ergodic distributions on pairs, we have found a distribution $\rho\in\cl{H_0}$ such that $W_\rho(H_0)=0$. We can conclude that  there is no $\alpha$-level consistent test for $H_0$ against its complement. 
\end{proof}

In contrast to the situation with homogeneity testing described in Section~\ref{s:one},  testing  independence becomes possible if we restrict the processes to be Markov.

Indeed, using the notation of the previous sections, it is easy to see that Theorem~\ref{th:asym} implies that there exists an asymmetrically consistent test for $\mathcal I\cap \mathcal M_k$
against $\mathcal E\backslash \mathcal I$,  for any given $k\in\N$. %
Analogously, if we confine $H_0$ to Hidden Markov processes of a given order, then asymmetric testing is possible. That is,
there exists an an asymmetrically consistent test for $\mathcal I\cap \mathcal {HM}_k$
against $\mathcal E\backslash \mathcal I$, for any given $k\in\N$.

\section{Open problems}
In spite of rather general results on the existence of tests presented in this chapter, perhaps it would not be an exaggeration to say that the most important questions remain open. This section attempts to  precise and summarize these. 
\subsection{Relating the notions of consistency}
Before delving deeper into problems relating various notions of consistency and generalizing the corresponding results,  
note that  two of the  notions of consistency considered, asymmetric ($\alpha$-level) consistency and asymptotic consistency, 
require a certain convergence to hold with probability~1. Naturally, one could replace this convergence 
with convergence in probability. Let us call the resulting notion {\em weak} asymmetric or asymptotic consistency, and 
those introduced above let us call {\em strong}. While weak consistency indeed appears weaker at first sight, it is easy to see, as Nobel \cite{Nobel:06}
remarks, that weak  asymptotic consistency implies strong asymptotic consistency for the case of i.i.d.\
or strongly mixing processes. It is similarly easy to verify that the same is true for asymmetric consistency. Moreover, 
for asymmetric consistency, the criterion given in Corollary~\ref{th:asymc} holds equally well for strong and for weak 
consistency, so in the case $H_0=\mathcal E\backslash H_1$ weak and strong asymmetric consistency are equivalent 
for stationary ergodic distributions as well.  This  suggests that these notions may be equivalent in general.
\begin{conjecture}[weak=strong]
 For stationary ergodic distributions, if there exists a weakly asymmetrically consistent  (weakly asymptotically consistent) test,
then there exists  a strongly consistent asymmetrically (strongly asymptotically consistent) test.
\end{conjecture}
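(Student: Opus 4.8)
The plan is to prove the two halves of the conjecture separately and, wherever possible, to reduce the weak-versus-strong question to the topological criteria already established rather than manipulating tests by hand. For \emph{asymmetric} consistency I would first observe that the necessary condition in the second half of Theorem~\ref{th:asym}, namely $W_\rho(H_1)=0$ for every $\rho\in\cl{H_0}$, is already forced by the existence of a merely weakly consistent $\alpha$-level test. Inspecting that proof, strong consistency entered only through $\limsup_n\mu(\psi^{\delta/2}_n=0)=0$, obtained via Fatou's lemma; under weak consistency one has the cleaner fact that $\mu(\psi^{\delta/2}_n=0)\to0$ for every $\mu\in H_1$ (a $\{0,1\}$-valued test converging in probability to $1$ satisfies $\mu(\psi_n=0)\to0$), so dominated convergence gives $\int_{H_1}\mu(\psi^{\delta/2}_n=0)\,dW_\rho\to0$, and the remainder of the argument --- continuity of $\mu\mapsto\mu(T)$ and the contradiction with $\xi_i\in H_0$ --- carries over verbatim. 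Hence the necessary condition is insensitive to the weak/strong distinction.

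Conversely, I would turn this condition back into a \emph{strongly} consistent test through the first half of Theorem~\ref{th:asym}. When $H_1=\mathcal E\setminus H_0$ this is immediate: since $W_\rho$ is supported on $\mathcal E$, the equality $W_\rho(H_1)=0$ is the same as $W_\rho(H_0)=1$, so Corollary~\ref{th:asymc} applies and $\psi^\alpha_{H_0,H_1}$ is strongly consistent. In the general case the sufficient condition asks for the strictly stronger $W_\rho(H_0)=1$, and the two differ exactly on limit points of $H_0$ whose ergodic decomposition charges the ``no-man's-land'' $\mathcal E\setminus(H_0\cup H_1)$. Here the plan is to enlarge the null hypothesis to $\tilde H_0:=\mathcal E\setminus\cl{H_1}$ and run $\psi^\alpha_{\tilde H_0,H_1}$: any test accepting on a neighbourhood of $\cl{\tilde H_0}\cap\mathcal E$ still controls the Type~I error on the smaller original $H_0\subseteq\tilde H_0$, so it would suffice to check that $\cl{\tilde H_0}$ inherits $W_\rho(H_1)=0$ and that the ergodic components of every $\rho_*\in\cl{\tilde H_0}$ lie in $\cl{\tilde H_0}\cap\mathcal E$, which is precisely what the consistency proof of $\psi^\alpha$ needs in order to force $\hat d(X_{1..n},\cl{\tilde H_0}\cap\mathcal E)\to0$ on almost every component. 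Establishing this inheritance of the decomposition condition to the closure of the enlarged null is the crux of the asymmetric case.

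For \emph{asymptotic} consistency no closed-form topological criterion is available for general stationary ergodic processes, so the plan is to boost a given weakly consistent test $\phi$ directly. I would fix an increasing sequence of scales $n_1<n_2<\cdots$, reuse $\phi$ at each scale, and commit to an answer only once the distributional-distance estimates of Lemma~\ref{th:dd} confirm that the current sample is consistent with the side $\phi$ reports, aggregating over scales with carefully chosen weights in the spirit of the online-clustering construction of Section~\ref{s:clon} and the change-point weighting of Section~\ref{s:chp}, where almost-sure guarantees are extracted from scale-by-scale weak ones without any rate.

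The main obstacle --- and the reason the statement is only conjectured --- is this boosting step. For stationary ergodic processes the error probabilities $\rho(\phi_n\ne i)$ decay at an uncontrolled, arbitrarily slow rate, so they need not be summable along any subsequence, and the Borel--Cantelli route that makes weak imply strong for i.i.d.\ or mixing data (as used by Nobel~\cite{Nobel:06}) is unavailable. The difficulty is to replace summability by an almost-sure stabilisation argument driven solely by the convergence of frequencies, and it is not clear that such a substitute exists in full generality; symmetrically, in the asymmetric case, closing the gap between $H_0$ and $\tilde H_0$ may fail for pathological alternatives $H_1$ whose closure interacts badly with the ergodic decomposition over the no-man's-land.
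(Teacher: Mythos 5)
This statement is labelled a \emph{conjecture} in the paper, and the paper offers no proof of it; it only records the evidence for it, namely that the criterion of Corollary~\ref{th:asymc} is insensitive to the weak/strong distinction, so that in the case $H_1=\mathcal E\setminus H_0$ weak and strong asymmetric consistency coincide. Your proposal reconstructs exactly this known fragment, and does so correctly: the derivation of the necessary condition $W_\rho(H_1)=0$ for $\rho\in\cl{H_0}$ from a merely weakly consistent test is sound (for a $\{0,1\}$-valued test, convergence in probability to $1$ gives $\mu(\psi_n=0)\to0$ directly, and dominated convergence replaces Fatou), and combining it with the sufficiency half of Theorem~\ref{th:asym} via $W_\rho(\mathcal E)=1$ recovers the complement case. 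To that extent you are in agreement with the paper's own discussion.

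Beyond that, however, your proposal does not prove the conjecture, and you say so yourself; the two remaining pieces you sketch are precisely where the statement is genuinely open. For general $H_1$, the enlargement $\tilde H_0:=\mathcal E\setminus\cl{H_1}$ runs into the problem you name: nothing guarantees that $\cl{\tilde H_0}$ is closed under ergodic decomposition, i.e.\ that $W_\rho(\tilde H_0)=1$ for $\rho\in\cl{\tilde H_0}$, which is what the sufficiency half of Theorem~\ref{th:asym} actually requires (the necessary condition only gives $W_\rho(H_1)=0$, leaving mass possibly on $\mathcal E\setminus(H_0\cup H_1)$, where no control is asserted). For asymptotic consistency there is no topological criterion in the paper at all, and the boosting scheme you outline has no mechanism for turning unsummable, arbitrarily slowly decaying error probabilities into almost-sure stabilisation; the Borel--Cantelli route is indeed unavailable. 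So the verdict is: your plan correctly reproduces the partial result the paper already states, correctly identifies the obstacles, and leaves open exactly what the paper leaves open. It should be presented as supporting evidence for the conjecture, not as a proof of it.
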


Passing to the relations between the notions of consistency, it might at first glance seam that asymmetric 
consistency is rather weak, since one of the errors does not go to zero. However, note that it is fixed at the given level $\alpha$
 independently of the sample size, and uniformly over $H_0$, making the resulting notion very strong.
In fact, from the discussion  on the i.i.d.\  processes in Section~\ref{s:one},  one can see that, for i.i.d.\ examples, 
uniform consistency is strictly stronger than  asymmetric consistency, and asymmetric consistency is strictly stronger than asymptotic consistency (in terms of the existence of tests).
One can conjecture that this is the case for stationary ergodic distributions as well. 
\begin{conjecture}[uniform$\Rightarrow$ asymmetric $\Rightarrow$ asymptotic consistency]
 Let $H_0,H_1\subset\mathcal E$. If there exists a uniformly consistent test for $H_0$ against $H_1$, then there
exists an asymmetrically consistent test for this pair of hypotheses. If there  exists an asymmetrically consistent  test  for $H_0$ against $H_1$, then there
exists an asymptotically consistent test for this pair of hypotheses. The opposite implications do not hold.
\end{conjecture}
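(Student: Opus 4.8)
The plan is to separate the two strictness claims from the two forward implications, since the former are within reach of results already established in this chapter, whereas the latter are exactly where the difficulty lies and are the reason the statement is only conjectured.

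For the strictness claims (``the opposite implications do not hold'') I would simply point at existing constructions. To see that asymmetric consistency does not imply uniform consistency, take the identity-testing pair $H_0=\{\rho_0\}$, $H_1=\mathcal E\setminus H_0$: the singleton $H_0$ is closed and closed under ergodic decompositions, so Corollary~\ref{th:asymc} gives an asymmetrically consistent test, while Corollary~\ref{th:nouni} forbids any uniformly consistent test of a non-empty hypothesis against its complement. To see that asymptotic consistency does not imply asymmetric consistency, I would invoke the homogeneity example for Markov chains recorded in Table~\ref{t:hom}: with $H_0$ the set of pairs of stationary ergodic Markov chains with equal marginals and $H_1$ its complement within the Markov pairs, there is an asymptotically consistent test, but, by the ergodic-decomposition argument for $(m_0,m_0)$ in Section~\ref{s:one}, no asymmetrically consistent one.

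For the two forward implications, both reduce to a single mechanism: upgrading a control on the \emph{probability} of error into an \emph{almost-sure} (finitely-many-errors) statement by letting the significance level shrink with the sample size and applying Borel--Cantelli. Given an asymmetrically consistent family $\psi^\alpha$, set $\phi_n:=\psi^{\alpha_n}_n$ for a summable schedule $\alpha_n\downarrow0$ (say $\alpha_n=2^{-n}$). Then for $\rho\in H_0$ we have $\rho(\phi_n=1)=\rho(\psi^{\alpha_n}_n=1)\le\alpha_n$, so $\sum_n\rho(\phi_n=1)<\infty$ and Borel--Cantelli forces $\phi_n=0$ from some $n$ on $\rho$-a.s.; this is exactly the $H_0$-side of asymptotic consistency, obtained for free. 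The implication uniform$\Rightarrow$asymmetric starts analogously: outputting $0$ for $n<n_\alpha$ and copying the uniform test for $n\ge n_\alpha$ bounds the Type~I error by $\alpha$ for every $n$, as the definition of asymmetric consistency demands.

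The hard part, where I expect the argument to stall, is controlling the \emph{other} error almost surely under the varying level: showing $\phi_n=1$ eventually a.s.\ for $\rho\in H_1$ (for asymptotic consistency), respectively that the Type~II error of $\psi^\alpha$ still occurs finitely often (for asymmetric consistency). For each \emph{fixed} $\alpha$ the tests of Theorem~\ref{th:asym} deliver this, since by Lemma~\ref{th:dd} $\hat d(X_{1..n},\cl{H_0}\cap\mathcal E)\to d(\rho,\cl{H_0})>0$ a.s.; but once $\alpha=\alpha_n\to0$ the acceptance region $C^n(\cl{H_0}\cap\mathcal E,1-\alpha_n)$, whose radius is $\gamma_n(\cl{H_0}\cap\mathcal E,1-\alpha_n)$, grows, and the nesting runs the wrong way, so $\psi^{\alpha}_n=1$ eventually (for fixed $\alpha$) does not transfer to the more conservative level $\alpha_n$. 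Closing the gap requires a schedule slow enough that $\gamma_n(\cl{H_0}\cap\mathcal E,1-\alpha_n)$ stays below $d(\rho,\cl{H_0})$ for all large $n$ uniformly over $\rho\in H_1$, yet fast enough to stay summable; quantifying this trade-off is precisely a statement about the \emph{speed} at which the deviation probabilities of Lemma~\ref{th:count} and the frequencies of Lemma~\ref{th:dd} converge, and the whole premise of this setting is that no such speed is available. This is the essential obstacle, and it is the same phenomenon that makes the weak$=$strong conjecture delicate. I would therefore first attempt the complement case $H_1=\mathcal E\setminus H_0$, where Corollary~\ref{th:asymc} makes the criteria tight and the ``don't-care'' region $\mathcal E\setminus(H_0\cup H_1)$ vanishes, before confronting the general disjoint case, in which even the topological conditions of Theorems~\ref{th:uni} and~\ref{th:asym} leave a gap between necessity and sufficiency.
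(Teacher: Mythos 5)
The statement you are addressing is presented in the paper as a \emph{conjecture} and is explicitly listed among the open problems of Chapter~\ref{ch:ht}; the paper offers no proof of the two forward implications, so there is no ``paper's own proof'' to compare against. Your proposal is accurate about exactly this: it proves what the paper's machinery can prove and correctly declines to prove what the paper itself leaves open. The strictness claims are handled correctly with the paper's own examples: identity testing ($H_0=\{\rho_0\}$ against $\mathcal E\setminus H_0$) separates asymmetric from uniform consistency via Corollary~\ref{th:asymc} and Corollary~\ref{th:nouni}, and homogeneity for Markov chains (Table~\ref{t:hom}, Section~\ref{s:one}) separates asymptotic from asymmetric consistency. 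Your Borel--Cantelli reduction for the forward implications is the natural first move, and you have identified the genuine obstruction: a summable schedule $\alpha_n$ controls the $H_0$-side error for free, but the acceptance region $C^n(\cl{H_0}\cap\mathcal E,1-\alpha_n)$ inflates as $\alpha_n\to0$, and in the absence of any speed-of-convergence guarantee for stationary ergodic processes one cannot certify that its radius eventually stays below $d(\xi,\cl{H_0})$ for $\xi\in H_1$. This is precisely why the statement remains conjectural.

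Two small additions you could make. First, in the complement case $H_1=\mathcal E\setminus H_0$ the implication uniform$\Rightarrow$asymmetric is actually a theorem, albeit for a degenerate reason: by Corollary~\ref{th:nouni} the premise is vacuous unless one hypothesis is empty. More informatively, the necessary condition of Theorem~\ref{th:uni} ($W_\rho(H_1)=0$ for every $\rho\in\cl{H_0}$) combined with $W_\rho(\mathcal E)=1$ yields exactly the sufficient condition of Corollary~\ref{th:asymc}, so the implication is at least consistent with the topological characterizations. Second, the implication asymmetric$\Rightarrow$asymptotic remains open even in the complement case, since the paper supplies no sufficient condition for asymptotic consistency of stationary ergodic processes (only the i.i.d.\ criterion of \cite{Dembo:94} and the partial results of \cite{Nobel:06} are cited); so restricting to the complement does not by itself close that half of the conjecture.
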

Note that the implication ``uniform$\Rightarrow$  asymptotic consistency'' is rather obvious, and it is also obvious that the opposite does not hold. The question is, therefore, about the place of asymmetric consistency in the middle; more precisely, whether the strict inclusion generalises from the i.i.d.\ to the stationary ergodic case.
\begin{svgraybox}
 It remains open to see whether the  relation between the notions of consistency (uniform, asymmetric, asymptotic, weak/strong) that holds for i.i.d.\ processes carries over to the stationary ergodic case.
\end{svgraybox}

\subsection{Characterizing hypotheses for which consistent tests exist}

The { main open problem} that remains is to find necessary and sufficient conditions for the existence of each 
kind of the tests: uniform, asymmetric, and asymptotic. 

\begin{problem}
 Find necessary and sufficient conditions on hypotheses $H_0,H_1\subset\mathcal E$ for the existence of (uniformly, asymmetrically,  asymptotically) consistent tests.
\end{problem}
The only case for which the presented necessary and sufficient conditions coincide is the case of asymmetric consistency when $H_1=\mathcal E\setminus H_0$. It is not known whether the same conditions are necessary and sufficient for general pairs $H_0,H_1$ (i.e., when $H_1$ is not necessarily the complement of $H_0$).  However, the fact that for this case we have an ``if and only if'' criterion, suggests that the topology of the distributional distance is indeed the right one to consider for such characterisations.

Another important problem is to generalize the results of Chapter~\ref{ch:ht} to real-valued processes.
\begin{problem}
 Find generalisations of Theorems~\ref{th:asym},~\ref{th:uni} to real-valued processes.
\end{problem}
The main difference for the real-valued case is that, in the finite-alphabet case, the distributional distance in the form~\eqref{eq:ddisd} gives a compact space of distributions. This fact has been relied upon heavily in the proofs of the corresponding theorems. The distributional distance in the form~\eqref{eq:ddisr} does not result in a compact space of distributions. The general form~\eqref{eq:ddis} {\em can} give a compact space; indeed, as mentioned in Chapter~\ref{ch:pre}, this is the case if the sets $(B_i)_{i\in\N}$ form is a standard basis. However, as \cite{Gray:88} mentions, there is no easy constructing of such a basis for the real-valued case, even though such a basis exists. On the other hand, an explicit construction is required in order to speak about distance estimates.

\subsection{Independence testing}
Recall the problem of independence from Section~\ref{s:ind}:
  given two samples, $X_1,\dots,X_n$ and $Y_1,\dots,Y_m$, %
 it is required to test whether the   process  generating the first sample is  independent from the one generating the second. 

It is interesting to note that for the case of i.i.d.\ data, the problems of homogeneity testing and independence testing can be reduced to one another. The situation is different for dependent data, as we have seen already for the case of (discrete-state) Markov chains:  for these processes, there exists an asymmetric test for independence but not for homogeneity. Moreover, whereas for homogeneity (process discrimination) we have seen in Section~\ref{s:hom} that there is no asymptotically consistent test, for independence the question of the existence of such a test remains open.

 Thus, we can formulate what is known and what is not known about this problem in the following table, which can be compared  to the one about homogeneity testing (Table~\ref{t:hom}). 

\begin{table}[h]\label{t:ind}
\caption[]{Existence of a consistent test for the hypothesis of independence against its complement, for different notions of consistency and classes of processes. The differences with homogeneity  testing (Table~\ref{t:hom}) are marked in bold.}
\begin{tabular}{l|l|l|l}
  & I.i.d. & Markov & Stationary ergodic  \\\hline
Asymmetric consistency& Test exists\ \ & {\bf Test exists}\ \  & No test (Proposition~\ref{th:notest})   \\
Asymptotic consistency& Test exists\ \ & Test exists\ \ & {\bf Open question}   
\end{tabular}
\end{table}

\chapter{Generalizations}\label{ch:conc}
In this chapter we outline a number of generalizations of the results described in this volume. Some of these have already been made, while others present interesting directions for future research.
\section{Other distances}
The empirical  distributional distance on which the results of the previous chapters hinge can be seen as an ordinate way of counting frequencies of everything. One may wonder whether the same theoretical consistency results can be obtained while allowing one to benefit from using  some of the more sophisticated tools in the box. 

This is, indeed, possible, by considering different distances between processes, and then plugging in their estimates into the same algorithms. 
Here we try to see what distances can be used and which properties are required. While doing so we are mostly concerned with generalizing the results of Chapters~\ref{ch:basic} and~\ref{ch:clchp}, as the theory of hypothesis testing of Chapter~\ref{ch:ht} is somewhat more delicate.

Introduce the notation $\rho^k$ for the $k$-dimensional marginal distribution of a time-series distribution~$\rho$. %

\subsection{$\operatorname{sum}$ Distances}

Observe that the distributional distance $d$ in its more-specified formulations~\eqref{eq:ddisd} and~\eqref{eq:ddisr}  has the form 
\begin{equation}\label{eq:sum}
 d(\rho_1,\rho_2)=\sum_{k\in\N} w_k d_k(\rho_1^k,\rho_2^k),
\end{equation}
where $w_k$ are summable positive real weights and $d_k()$ is a certain distance between $k$-dimensional marginal distributions. %

It is easy to see that distances of this  form  can be consistently 
estimated, as long as $d_k$ can be consistently estimated for each~$k\in\N$; this is formalized in the following statement.
\begin{proposition}[estimating sum-based distances] 
Let  $\C$ be a set of process distributions. Let $d_k, k\in\N$ be a series of distances on the spaces of distributions over $A^k$ that are bounded uniformly in $k$, %
and such that 
there exists a series $\hat d_k(X_{1..n},Y_{1..n}),k\in\N$ of their consistent estimates: 
 $\lim_{n\to\infty} \hat d_k(X_{1..n},Y_{1..n})=d_k(\rho_1^k,\rho_2^k)$ a.s.,
whenever $\rho_1,\rho_2\in\C$ are chosen to generate the sequences. Then the distance $D$ given by~(\ref{eq:sum}) can be
consistently estimated using the estimate $\sum_{k\in\N} w_k\hat d_k(X_{1..n},Y_{1..n})$. 
\end{proposition}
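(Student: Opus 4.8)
The plan is to follow almost verbatim the argument used for Lemma~\ref{th:dd}, since the structure of the sum~\eqref{eq:sum} is identical; the only extra ingredient needed is the uniform boundedness of the family $(d_k)_{k\in\N}$, which plays the role that the explicit $[0,1]$-range of the frequencies played there. First I would record that $D$ is well defined and finite: writing $M$ for a common bound, so that $d_k\le M$ for all $k$, and since $\sum_{k\in\N}w_k<\infty$, the series $\sum_{k\in\N}w_k d_k(\rho_1^k,\rho_2^k)$ converges. I would also note that in the intended setting each estimate $\hat d_k$ is itself a distance, namely $d_k$ applied to the empirical $k$-dimensional distributions of the two samples, and hence obeys the same bound $\hat d_k\le M$; this is precisely what lets me control the tail of the estimating series uniformly in $n$.

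Next I would isolate the probability-one event on which the whole argument runs. For each fixed $k$, consistency gives $\hat d_k(X_{1..n},Y_{1..n})\to d_k(\rho_1^k,\rho_2^k)$ on an event $\Omega_k$ of probability one. Taking $\Omega:=\bigcap_{k\in\N}\Omega_k$, a countable intersection of probability-one events, $\Omega$ again has probability one, and on $\Omega$ all the coordinate estimates converge simultaneously. Everything below is carried out on $\Omega$.

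Then comes the main estimate. By the triangle inequality,
\begin{equation*}
\Bigl|\sum_{k\in\N}w_k\hat d_k-\sum_{k\in\N}w_k d_k\Bigr|\le\sum_{k\in\N}w_k\,|\hat d_k-d_k|.
\end{equation*}
Fix $\epsilon>0$. Using summability of the weights, choose $J$ with $\sum_{k\ge J}w_k<\epsilon/(4M)$; since both $d_k$ and $\hat d_k$ lie in $[0,M]$, the tail is bounded by $2M\sum_{k\ge J}w_k<\epsilon/2$, \emph{uniformly in $n$}. For the finitely many head terms $k<J$, a.s.\ convergence on $\Omega$ lets me choose $N_k$ with $w_k|\hat d_k-d_k|<\epsilon/(2J)$ for all $n\ge N_k$; setting $N:=\max_{k<J}N_k$ (a maximum over finitely many indices), the head contributes at most $J\cdot\epsilon/(2J)=\epsilon/2$ once $n\ge N$. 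Adding the two bounds yields $\epsilon$, which establishes $\sum_{k\in\N}w_k\hat d_k\to D(\rho_1,\rho_2)$ on $\Omega$, and hence a.s.

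I do not anticipate any genuine obstacle: the single point needing care is the one already flagged, namely that the tail of the series must be bounded uniformly in $n$, which is exactly where uniform boundedness of $(d_k)$ and of its estimates is used, together with the minor measure-theoretic bookkeeping of intersecting countably many probability-one events so that all coordinate estimates converge on one common event.
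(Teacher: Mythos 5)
Your proof is correct and follows essentially the same truncation argument the paper uses for Lemma~\ref{th:dd}, which is exactly the argument the text implicitly appeals to when it says the proposition is ``easy to see'' (no separate proof is given in the paper). You also rightly flag the one subtlety the statement glosses over~--- that the estimates $\hat d_k$ themselves must share the uniform bound for the tail to be controlled uniformly in $n$~--- and justify it by the intended construction of $\hat d_k$ as $d_k$ applied to empirical marginals.
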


Clearly, the distributional distance $d$ is an example of a distance in the form~(\ref{eq:sum}), and it satisfies the conditions 
of the proposition with $\mathcal C$ being the set of all stationary ergodic processes. 
Another example is the {\em telescope distance} %
considered in the next subsection.

\subsection{Telescope distance}
The telescope distance, introduced in  \cite{Ryabko:13red+}, is, in fact, a scheme for defining distances between processes.
In order to define the telescope distance,  we first start with a  metric on distributions on $A^k$. 
For two probability  distributions $P$ and  $Q$~on $(A^k,\B_k)$ for some $k\in\N$ and  a set $\H$ of measurable functions on $A^k$, one can define the  distance
\begin{equation}\label{eq:f}
d_\H(P,Q):=\sup_{h\in\H} |\E_P h- \E_Q h|.
\end{equation}
This metric in its general form has been studied since at least   \cite{Zolotarev:83} and 
 includes Kolmogorov-Smirnov  \cite{Kolmogorov:33} and Kantorovich-Rubinstein \cite{Kantorovich:57}  metrics as special cases.
It is measurable under mild conditions; in particular, separability of $\H$ is sufficient for this.
Moreover, it is easy to check that  $d_\H$ is a metric on the space of probability distributions over $A^k$ if 
 and only if $\H$ generates~$B_k$.

An example of the sets $\H$ are the sets of  hyperplanes in $\R^k$, $k\in\N$. 

Based on $d_\H$ we can construct a distance between time-series probability distributions.
For two time-series distributions $\rho_1,\rho_2$ and sets $\H_k$ of functions on $A^k$, $k\in\N$, we take the  $d_{\H_k}$ between $k$-dimensional marginal distributions
of $\rho_1$ and $\rho_2$ for each $k\in\N$, and sum them all up with decreasing weights.

\begin{definition}[telescope distance]\label{d:tele} 
For two processes $\rho_1$ and $\rho_2$ %
and a  sequence of sets of functions $\bH=(\H_1,\H_2,\dots)$ 
 define the {\em telescope distance} 
\begin{equation}%
 D_\bH(\rho_1,\rho_2)%
:=\sum_{k=1}^\infty w_k \sup_{h\in\H_k} |\E_{\rho_1} h (X_1,\dots,X_k)- \E_{\rho_2} h(Y_1,\dots,Y_k)|,
\end{equation}
where $w_k$, $k\in\N$ is  a sequence of positive summable real weights (e.g., the weights we were using before, $w_k:=1/k(k+1)$).
The {\em empirical telescope distance} is defined
as 
\begin{multline*}\label{eq:ets}
 \hat D_\bH(X_{1..n},Y_{1..m}):=\\
\sum_{k=1}^{\min\{m,n\}} w_k \sup_{h\in\H_k} \left|\frac{1}{n-k+1}\sum_{i=1}^{n-k+1}  h (X_{i..i+k-1})- \frac{1}{m-k+1}\sum_{i=1}^{m-k+1}  h(Y_{i..i+k-1})\right|.
\end{multline*}
\end{definition}

It is shown in \cite{Ryabko:13red+} that the empirical telescope distance so defined is a consistent estimate of the telescope distance, 
if the sets $\H_k$ are separable sets of indicator function of finite VC dimension. The separability condition comes  from \cite{Adams:12} where the corresponding uniform convergence result is established. 

The main appeal of the telescope distance is that it can be estimated using binary classification methods developed for i.i.d.\ data. Such methods  are abound in the machine learning literature. Thus, the telescope distance  allows one to channel these methods for use in problems involving  time series, such as clustering and the three-sample problem considered in Chapters~\ref{ch:basic},~\ref{ch:clchp}.  

The details of the algorithms, as well as the  proofs and experimental results, can be found in  \cite{Ryabko:13red+}.

\subsection{$\operatorname{sup}$ Distances}
A different way to construct a distance between time-series distributions based on their finite-dimensional marginals is to 
use the supremum instead of summation in~\eqref{eq:sum}: 
\begin{equation}\label{eq:sup2}
 d(\mu,\nu):=\sup_{k\in\N} d_k(\mu_k,\nu_k).
\end{equation}

Some commonly used metrics are defined in the form~(\ref{eq:sup2})  or have natural interpretations in this form, as the following two examples show.

\begin{definition}[total variation]
 For time-series distributions $\nu,\mu$ the total variation distance %
between them is defined as $D_{tv}(\mu,\nu):=\sup_{A\in\B}|\mu(A)-\nu(A)|$.
\end{definition}
It is easy to see that  $D_{tv}(\mu,\nu)=\sup_{k\in\N}\sup_{A\in\B_{k}}|\mu(A)-\nu(A)|$, so that the total variation distance has the form~(\ref{eq:sup2}).

For stationary ergodic distributions this distance is not very useful, since it just gives the discrete distance: $D_{tv}(\mu,\nu)=1$ if and only if $\mu\ne\nu$. This follows from the fact that any two different stationary ergodic distributions are singular with respect to one another.

Another example of a $\operatorname{sup}$-distance is the $\bar d$ distance, defined in Section~\ref{s:hom}. To see that it is indeed a $\operatorname{sup}$-distance, consider the following definition of it, which is equivalent to the previous one (see, e.g.\ \cite{Shields:96,Ornstein:90})
$$
\bar d(\rho_1,\rho_2):=\sup_{k\in\N}\frac{1}{ k}\inf_{p\in P}\sum_{i=1}^k \E_p\delta(x_i,y_i),
$$
where $P$ is the set of all distributions  over
 $A^k\times A^k$ generating a pair of samples $X_{1..k}, Y_{1..k}$ whose marginal 
distributions are $\rho_1^k$ and $\rho_2^k$ correspondingly. %

As explained in  Section~\ref{s:hom}, this distance turns out to bee too strong  for stationary ergodic processes but still useful for $B$-processes, since it is only possible to construct its consistent estimates for the latter set.

\subsection{Non-metric distances}
So far we have been considering distances  that constitute a metric on the space of all process distributions, or on the space of stationary process distributions. In particular, they have the property of exactness, that is $d(\rho_1,\rho_2)=0$ if and only if $\rho_1=\rho_2$.  This allowed us to solve such problems as clustering (with respect to distribution), where we cluster together those and only those samples that were generated by the same distribution. 

Sometimes a weaker goal may be appropriate. For example, one may wish to distinguish only between distributions that have different single-dimensional means and variances, or some other characteristics.
Depending on the characteristics  of the processes studied, it may be more or less straightforward to establish the consistency of their empirical estimates. However, if consistent empirical estimates are available, it should be reasonably straightforward to translate the algorithms and the results on  clustering and change-point problems to such distances. 

\subsection{AMS distributions}
A particular instance of non-metric distances described in the previous section are distances between the asymptotic-mean distributions of ergodic (non-stationary) or  AMS distributions.  For non-stationary distributions, in general, one cannot make any inference about the distribution of any initial segment given just one time series sample, which is the case in all the problems we have considered. However, we can make inference about the asymptotic means. We can thus consider the distance between the asymptotic-mean distributions. It is, in fact, the same distributional distance that we have worked with in this volume, only considered as the distance between asymptotic-mean distributions and not the process distributions themselves.  Of course, its  empirical estimates simply carry over. Note that, considered as a distance between process distributions, it is not a metric, since we can have $d(\rho_1,\rho_2)=0$ for $\rho_1,\rho_2$ that are different (but have the same asymptotic mean). With this distinction in mind, all the formulations of basic-inference, clustering and change-point problems translate to this this more general setting, with ``ergodic'' substituted for 
``stationary ergodic'' and ``AMS'' for ``stationary,'' and the proofs carry over intact.

\section{Piece-wise stationary processes}
When dealing with change-point problems (Section~\ref{s:chp}), we have defined a set of process distributions that can be seen as a generalization of stationary process distributions: piece-wise stationary processes. These are constructed by defining a sequence of integer-valued {\em change points}, such as between each two consecutive change points the distribution is stationary (or stationary ergodic). 

This kind of construction has been widely studied for more restrictive sets of processes, and  mainly for i.i.d.\ processes, resulting in piece-wise i.i.d.\ models; see, for example  \cite{Willems:96,Gyorgy:12} and references.

For the stationary ergodic case, we have seen that meaningful inference is possible for finitely many change points and linear-sized (in the total sample size $n$) segments between change points. While, constrained by the nature of the change-points problems we have considered, we have only dealt with fixed sample size and offline formulations, the distributions can be defined in a similar fashion on infinite sequences. 
A piece-wise stationary  distribution is thus identified with a sequence of stationary distributions and a sequence of change points. A number of inference problems can be formulated about these processes, including versions of the clustering and hypotheses-testing problems considered in this volume. Offline clustering and identity testing appear to be the first interesting problems to explore in this regard.
\section{Beyond time series}
\subsection{Processes over multiple dimensions}
 Time series, or discrete-time process distributions that are subject of this volume, can be seen as  discrete-coordinate stochastic processes extending to infinity in  one dimension. %
One can also consider discrete-coordinate {\em multi-dimensional} stochastic processes. The concept of stationarity and ergodicity can be defined similarly to the single-dimensional case. Thus, for a dimension $d\in\N$,
one can consider a process $(X_u)$    indexed by $u\in \N^d$, over the space
$((A^\infty)^d,\Omega)$ where $\Omega$ is the Borel sigma-algebra. Such processes are simply 
probability measures over  $((A^\infty)^d,\Omega)$.  Stationarity can be defined using shifts $T_i$ along each coordinate $i\in\{1..d\}$.
A process measure $\rho$  is called stationary if it is preserved under   shifts, that is $\rho(X_{[0,v)}\in B)=\rho(T_uX_{[0,v)}\in B)$ 
for all $u,v\in V$ and all Borel $B$.
Ergodic theorems can be established for such processes, see, for example, \cite{Krengel:85}. 
This is all one needs to use empirical estimates of the distributional distance, and thus formulate and solve basic-inference as well as clustering problems, similar to how it is done in Sections~\ref{s:three}, \ref{s:clusta}. The construct of the distributional distance appears to be  general enough even for some results on hypothesis testing of Chapter~\ref{ch:ht} to be generalizable to this setting.

Change-point problems morph into something much more complex, as change points become change boundaries. It  thus appears interesting to explore what kind of change-point-like problems admit solutions in this more general setting.
\subsection{Infinite random graphs}
Another way to generalize time series is to consider infinite random graphs. The necessary probability-theoretic foundations have been laid out in 
 \cite{Aldous:07,lyons2016probability}, while the work \cite{Benjamini:12} uses these to introduce the  notions and establish some basic facts of the ergodic theory on these spaces. 
It turns out that the distributional distance is a general enough construction to be ported directly to this more general case, and some of the results of this volume, including Theorem~\ref{th:asym}, can be generalized with little extra work. This is done in the work \cite{Ryabko:17gratest}, which also outlines a number of interesting research directions that emerge in this area.

  \bibliographystyle{plain}

\end{document}